\newtheorem{theorem}{Theorem}
\newtheorem{lemma}{Lemma}
\newtheorem{proposition}{Proposition}
\newtheorem{corollary}{Corollary}
\newtheorem{claim}{Claim}
\newtheorem{definition}{Definition}
\newenvironment{proof}
      {\medskip\noindent{\bf Proof:}\hspace{1mm}}
      {\hfill$\Box$\medskip}
\def\qed{\ifvmode\mbox{ }\else\unskip\fi\hskip 1em plus 10fill$\Box$}
\def\Ddots{\mathinner{\mkern1mu\raise\p@
\vbox{\kern7\p@\hbox{.}}\mkern2mu
\raise4\p@\hbox{.}\mkern2mu\raise7\p@\hbox{.}\mkern1mu}}
\title{\vspace{-0.7cm}Decompositions into subgraphs of small diameter}
\author{Jacob
Fox\thanks{Department of Mathematics, Princeton, Princeton, NJ.
Email: {\tt jacobfox@math.princeton.edu}. Research supported by an
NSF Graduate Research Fellowship and a Princeton Centennial
Fellowship.} \and Benny Sudakov\thanks{Department of Mathematics,
UCLA,  Los Angeles, CA 90095. Email: {\tt bsudakov@math.ucla.edu}. Research supported in part by NSF CAREER award DMS-0812005 and by
USA-Israeli BSF grant.}}
\date{}
\begin{document}
\maketitle

\begin{abstract}

We investigate decompositions of a graph into a small number of low diameter subgraphs. Let $P(n,\epsilon,d)$ be
the smallest $k$ such that every graph $G=(V,E)$ on $n$ vertices has an edge partition $E=E_0
\cup E_1 \cup \ldots \cup E_k$ such that $|E_0| \leq \epsilon n^2$ and for all $1 \leq i \leq k$ the diameter of
the subgraph spanned by $E_i$ is at most $d$. Using Szemer\'edi's
regularity lemma, Polcyn and Ruci\'nski showed that $P(n,\epsilon,4)$ is bounded above by a
constant depending only $\epsilon$. This shows that every dense graph can be partitioned into a small number of
``small worlds'' provided that few edges can be ignored. Improving on
their result, we determine $P(n,\epsilon,d)$ within an absolute constant factor, showing that $P(n,\epsilon,2)=
\Theta(n)$ is unbounded for $\epsilon< 1/4$, $P(n,\epsilon,3) =\Theta(1/\epsilon^2)$ for
$\epsilon> n^{-1/2}$ and $P(n,\epsilon,4) =\Theta(1/\epsilon)$ for $\epsilon>n^{-1}$. We also prove that if $G$
has large minimum degree, {\em all the edges} of $G$ can be covered by a small number of low
diameter subgraphs. Finally, we extend some of these results to hypergraphs, improving earlier work of Polcyn,
R\"odl, Ruci\'nski, and Szemer\'edi. \end{abstract}

\section{Introduction}
The {\it distance} between two vertices of a graph is the length of the shortest path between them. The {\it
diameter} $\textrm{diam}(G)$ of a connected graph $G=(V,E)$ is the maximum distance between any
pair of vertices of the graph. If $G$ is not connected, $\textrm{diam}(G)=\infty$. For an edge subset $E'
\subset E$, the diameter of $E'$ is the diameter of the subgraph of $G$ with edge set $E'$, whose
vertex set consists of all the vertices of $G$ which belong to at least one edge of $E'$.

Extremal problems on the diameter of graphs have a long history and were first investigated by Erd\H{o}s and
R\'enyi \cite{ErRe} and Erd\H{o}s, R\'enyi, and S\'os \cite{ErReSo}, who studied the minimum number of edges in an
$n$-vertex graph with diameter at most $d$. Another line of research concerning the change of diameter if edges of the graph
are added or deleted was initiated by Chung and Garey \cite{CG} (see also, e.g.,
\cite{EGR, AGR}). In this paper, we investigate decompositions of a graph
into a small number of subgraphs of low diameter. A motivation for such decompositions comes from distributed
computing. We are given a set of processors (vertices) with communication channels
(edges) between pairs of processors. A fundamental problem when designing algorithms on such systems is
determining how much coordination must be done between the processors and accomplishing this
coordination as efficiently as possible. The simplest approach is to centralize the network by appointing one
processor to coordinate the actions of the network. This approach often simplifies the problem
and leads to distributed algorithms based on known serial algorithms. However, if the network has large diameter,
such rigid centralization can degrade system performance due to delays in communication.
One solution to this problem is to partition the network into regions
of low diameter. This approach was used for example by Linial and Saks
\cite{LiSa} who showed that every graph on $n$ vertices
can be vertex partitioned into $O(\log n)$ induced subgraphs whose connected
components have diameter $O(\log n)$.

In this paper we will consider another variant of the low diameter decomposition problem, which was also studied by several
researchers. In this problem the goal is to partition nearly all the edges of a graph into a small
number of low diameter subgraphs. More precisely we study the following parameter.

\begin{definition}
Let $P(n,\epsilon,d)$ be the smallest $\ell$ such that every graph $G=(V,E)$ on $n$ vertices has an edge partition
$E=E_0 \cup E_1 \cup \ldots \cup E_{\ell}$ such that $|E_0| \leq \epsilon n^2$ and the diameter of $E_i$ is at most
$d$ for $1 \leq i \leq \ell$.
\end{definition}

Polcyn and Ruci\'nski \cite{PoRu} recently showed that every dense graph can be partitioned into a small
number of ``small worlds'' provided that a small fraction of the edges can be ignored. Specifically, they proved that
$P(n,\epsilon,4)$ is bounded by a constant depending only on $\epsilon$. Their proof relies on Szemer\'edi's regularity
lemma and consequently gives an enormous upper bound on $P(n,\epsilon,4)$ as a function of $\epsilon$, i.e.,
it shows that $P(n,\epsilon,4)$ can be bounded from above by a tower of $2$s of height polynomial in $1/\epsilon$.
One of our main results determines $P(n,\epsilon,d)$ up to a constant factor. It improves on the result of Polcyn and Ruci\'nski
both on the diameter bound and on the number of parts.

\begin{theorem}\label{main1} (a) For $\epsilon<1/4$ bounded away from $1/4$, we have $P(n,\epsilon,2)=\Theta(n)$. \\
(b) For $\epsilon \geq n^{-1/2}$, we have $P(n,\epsilon,3)=\Theta(1/\epsilon^2)$. \\
(c) For $\epsilon \geq n^{-1}$, we have $P(n,\epsilon,4)=\Theta(1/\epsilon)$.
\end{theorem}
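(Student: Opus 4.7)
The theorem has three independent statements (a), (b), (c), each asserting matching $O$ and $\Omega$ bounds on $P(n,\epsilon,d)$ for $d=2,3,4$. My plan is to handle each separately using a common strategy: for each upper bound I would iteratively extract a diameter-$d$ subgraph with many edges using a density lemma; for each lower bound I would construct a specific graph whose diameter-$d$ subgraphs are forced to be small.

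Part (a) is easiest. For the upper bound $P(n,\epsilon,2)\le n$, I would order the vertices $v_1,\ldots,v_n$ arbitrarily and let $E_i$ consist of the edges from $v_i$ to $\{v_{i+1},\ldots,v_n\}$; each $E_i$ is a star of diameter $\le 2$. For the lower bound $\Omega(n)$, I would use a pseudo-random bipartite graph with parts of size $n/2$ and edge density bounded away from both $0$ and $1$: in a bipartite graph every connected diameter-$\le 2$ subgraph is a complete bipartite graph, and pseudo-randomness forces these bicliques to be either stars (with at most $n/2$ edges) or of size $O(\log^2 n)$, so $\Omega(n)$ of them are needed to cover $\Omega(n^2)$ edges.

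For part (c), the density lemma I would prove is: in any $G$ on $n$ vertices with $m$ edges, some vertex $v$ yields a subgraph $H_v$ (consisting of all edges incident to $N[v]$) with $|E(H_v)|\ge 2m^2/n^2$ and diameter at most $4$. This follows from a Cauchy--Schwarz calculation of $\sum_v|E(H_v)| = \sum_v\deg(v)^2-3T$, where $T$ is the triangle count, combined with $\sum_v\deg(v)^2\ge 4m^2/n$ and $3T\le \frac12\sum_v\deg(v)^2$. Iterating yields the recurrence $\rho_{i+1}\le\rho_i(1-2\rho_i)$ on the edge density, giving $O(1/\epsilon)$ steps. For the lower bound $\Omega(1/\epsilon)$, I would take the disjoint union of $\Theta(1/\epsilon)$ cliques of size $\Theta(\epsilon n)$: any diameter-$\le 4$ subgraph is confined to one clique with $O(\epsilon^2 n^2)$ edges.

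Part (b) is the most intricate. For the upper bound, my density lemma would say: some edge $uv\in E$ yields $H=G[N[u]\cup N[v]]$ of diameter $\le 3$ with $|E(H)|\ge cm^3/n^4$. The key counting is that $\sum_{uv\in E}|E(N(u),N(v))|$ essentially counts labeled $4$-cycles, combined with the Kruskal--Katona-type bound $C_4\ge cm^4/n^4$ on the number of $4$-cycles (via Jensen's inequality on codegrees). Iterating gives the recurrence $\rho_{i+1}\le\rho_i(1-c\rho_i^2)$, solving to $O(1/\epsilon^2)$ parts. The main obstacle will be the matching lower bound, where I would need a graph with many edges, diameter greater than $3$, and essentially minimum possible $4$-cycle count $C_4=\Theta(m^4/n^4)$, so that the maximum diameter-$3$ subgraph has at most $O(\epsilon^3 n^2)$ edges and $P\ge m/\max\ge\Omega(1/\epsilon^2)$. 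Natural candidates such as random graphs (too small diameter, so $G$ itself is one part) and disjoint cliques (yielding only $\Omega(1/\epsilon)$) fail, so a more intricate algebraic or probabilistic construction, interpolating across the full range $\epsilon\ge n^{-1/2}$, will be needed, and verifying its diameter-$3$ subgraph sizes is the main technical step.
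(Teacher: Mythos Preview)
Your plans for (a), (c), and the upper bound in (b) are essentially correct and close to the paper's arguments. One minor correction in (a): in a random bipartite graph with constant edge density $p<1$, it is not true that every non-star biclique has $O(\log^2 n)$ vertices (for instance $K_{2,\Theta(n)}$ typically occurs), but the weaker statement that every biclique has $O(n)$ edges suffices and is what the paper proves. For the (b) upper bound, the paper counts paths of length $3$ and averages over all ${n\choose 2}$ pairs $(u,v)$ (using the subgraph $G_3(u,v)$ induced on vertices lying on length-$\le 3$ walks from $u$ to $v$) rather than counting $4$-cycles and averaging over edges, but both routes yield a diameter-$3$ subgraph with $\Omega(m^3/n^4)$ edges and the iteration is identical.

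The real gap is the lower bound in (b), and your instinct to discard random graphs is precisely what leads you astray. The paper's construction \emph{is} a random bipartite graph, namely $G(t,t,p)$ with $p=\frac{1}{4\sqrt t}$; at this density the graph has $\Theta(t^{3/2})$ edges but is sparse enough that it is far from having diameter $3$ itself. The technical heart of the argument is then to show that every diameter-$3$ subgraph of this random graph has only $O(\sqrt t)$ edges. The key structural input is that a bipartite graph has diameter at most $3$ if and only if every two vertices on the same side share a neighbour; combined with the codegree bound (any two vertices of $G(t,t,p)$ share $O(\log t)$ neighbours) and a convexity argument on the degree sequence, this forces any diameter-$3$ subgraph to lie almost entirely inside $N(a)\cup N(b)$ for some single pair $a,b$, which has the claimed size. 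To reach general $\epsilon\ge n^{-1/2}$ one takes $t=\Theta(\epsilon^{-2})$ and blows each vertex up by a factor $n/(2t)$; a short lemma shows that diameter-$d$ subgraphs of a blow-up project to diameter-$d$ subgraphs of the base, so the largest diameter-$3$ subgraph of the blown-up graph has $O(\epsilon^3 n^2)$ edges and $\Omega(\epsilon^{-2})$ parts are required. Your $C_4=\Theta(m^4/n^4)$ heuristic is consistent with this example, but note that a small $4$-cycle count does not by itself bound the size of diameter-$3$ subgraphs; the structural analysis above is what is actually needed.
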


There is a sharp transition in the behavior of the function $P(n,\epsilon,2)$ at $\epsilon=1/4$,
namely $P(n,1/4,2)$$=1$. Note also that $P(n,\epsilon,1)$ is the minimum number of edge-disjoint cliques needed to cover
all but $\epsilon n^2$ edges in any graph on $n$ vertices. This parameter is not so interesting to study as for $\epsilon$ not too
large,
$P(n,\epsilon,1)$ is quadratic in $n$ by considering the complete bipartite graph with parts of equal size.

Extremal problems on the diameter of graphs with large minimum degree have also been studied. For example, Erd\H{o}s
et al.~\cite{ErPaPoTu}, answering a question of Gallai, determine up to an additive constant the largest possible
diameter of a connected graph with a given number $n$ of vertices and minimum degree $\delta$. The answer is within an
additive constant of $\frac{3n}{\delta+1}$.
For graphs with large minimum degree, we can show that {\it all} edges
of such graphs can be covered by a small number of low
diameter subgraphs (which are not necessarily edge-disjoint).

\begin{definition}
Let $Q(n,\epsilon,d)$ be the minimum $\ell$ such that the edges of any graph $G=(V,E)$ on $n$ vertices with minimum
degree at least $\epsilon n$ can be covered by $\ell$ sets $E=E_1 \cup \ldots \cup E_{\ell}$ such
that the diameter of each $E_i$ is at most $d$.
\end{definition}

We prove the following two results, in which we use the properties of Kneser graphs to establish lower bounds.

\begin{theorem}\label{thmq34}
(a) For fixed $0<\epsilon \leq 2^{-8}$, both $Q(n,\epsilon,3)$ and $Q(n,\epsilon,4)$ have order of magnitude
$\Theta(\log n)$. \\
(b) $Q(n,\epsilon,5)=\Theta(1/\epsilon^{2})$ and $Q(n,\epsilon,6)=\Theta(1/\epsilon)$.
\end{theorem}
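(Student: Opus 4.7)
The four bounds in Theorem~\ref{thmq34} are largely independent, so I will plan each separately; the upper bounds all share a greedy/packing engine built from the minimum-degree hypothesis, while the lower bounds combine clique-disjoint unions with properties of Kneser graphs.

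The cleanest case is $Q(n,\epsilon,6)\leq O(1/\epsilon)$: run a greedy $2$-packing, repeatedly picking a vertex $v_i$ at distance greater than $2$ from all previously chosen centers. Distance greater than $2$ forces the closed neighborhoods $N[v_i]$ to be pairwise disjoint, and since each has size at least $\epsilon n+1$ the procedure halts after at most $1/\epsilon$ steps with every vertex within distance $2$ of some center. Setting $E_i = E(G[B_3(v_i)])$ covers every edge --- if $u\in B_2(v_i)$ then every neighbor $w$ of $u$ lies in $B_3(v_i)$ --- and each $E_i$ has diameter at most $6$ through $v_i$. For $Q(n,\epsilon,5)\leq O(1/\epsilon^2)$ I would refine the packing by assigning every edge $(u,w)$ an ordered pair of centers $(v_i,v_j)$ (those closest to $u$ and $w$) and forming one piece per pair; since any pair of centers joined by a cross-edge lies at graph-distance at most $5$, the pair-pieces can be built around a short $v_i$-$v_j$ spine together with BFS-layer truncations to achieve diameter $5$, giving the required $O(1/\epsilon^2)$ count.

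For the $O(\log n)$ upper bounds in part (a) I would use a probabilistic covering. For diameter $4$, take a uniformly random $S\subseteq V$ of size $C\log n/\epsilon$: for every edge $(u,w)$ the set $N(u)\cup N(w)$ has size at least $\epsilon n$, so with high probability $S$ meets it. For each $s\in S$ let $E_s$ be the edges with at least one endpoint in $N(s)$; then every vertex of $E_s$ is within distance $2$ of $s$, so the diameter of $E_s$ is at most $4$. For diameter $3$ the same sampling is combined with a pair-of-centers step: for each $s\in S$ and each $s'\in N(s)\cap S$, form a piece $E_{s,s'}$ consisting of the edges with one endpoint in $N[s]$ and the other in $N[s']$, together with the spine edge $(s,s')$, giving diameter at most $3$ through this edge. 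The constraint $\epsilon\le 2^{-8}$ is expected to enter when balancing the random-set size against the number of adjacency constraints the pair-spines must simultaneously satisfy.

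On the lower-bound side, $Q(n,\epsilon,6)\geq \Omega(1/\epsilon)$ follows immediately from a disjoint union of $\lfloor 1/\epsilon\rfloor$ cliques of size about $\epsilon n$ --- every diameter-$d$ subgraph lies in a single clique. For the sharper $\Omega(1/\epsilon^2)$ bound for $d=5$ and the $\Omega(\log n)$ bounds in part (a), I would use a Kneser graph $K(m,k)$, which is vertex-transitive with $\binom{m}{k}$ vertices and common degree $\binom{m-k}{k}$; tuning $m,k$ so that this ratio equals $\epsilon$ supplies the required minimum-degree hypothesis. The key structural claim is that any connected subgraph of $K(m,k)$ of diameter at most $d$ has small chromatic number --- morally because such a subgraph must embed into a Kneser graph on few underlying coordinates --- whereas $\chi(K(m,k))=m-2k+2$ grows, and comparing the two chromatic numbers forces the covering count. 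The main obstacle will be the upper-bound argument for $d=5$: engineering the pair-pieces so that the diameter is exactly $5$ (not $6$ or more) while keeping the total count $O(1/\epsilon^2)$ requires careful coordination between the spine length and the BFS-layer truncation. The matching technical challenge on the lower-bound side is establishing the right chromatic-number bound for small-diameter subgraphs of Kneser graphs with the correct dependence on $d$ and on the Kneser parameters.
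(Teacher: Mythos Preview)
Your upper-bound sketches for $d=4,5,6$ and the clique lower bound for $d=6$ are essentially the paper's arguments. For $d=5$ the paper makes your ``spine plus BFS-truncation'' idea precise via the induced subgraph $G_5(v_i,v_j)$ on all vertices lying on some walk of length at most $5$ between the two centers, having proved once (Lemma~\ref{diamatmostd}) that $G_d(v,w)$ always has diameter at most $d$. The same device with \emph{random ordered pairs} $(v_i,w_i)$ gives the $d=3$ upper bound cleanly in $2\epsilon^{-2}\log n$ pieces: for an edge $(u,w)$, whenever $v_i\in N[u]$ and $w_i\in N[w]$ the walk $v_i\,u\,w\,w_i$ puts $(u,w)$ inside $G_3(v_i,w_i)$, which happens with probability at least $\epsilon^2$. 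Your proposal of restricting to \emph{adjacent} pairs $s,s'$ both drawn from a single sample $S$ does not work: it yields $\Theta(|S|^2)$ pieces rather than $O(\log n)$, and more seriously there is no reason an edge $(u,w)$ should have a neighbour of $u$ and a neighbour of $w$ in $S$ that happen to be adjacent to each other. (Incidentally, the hypothesis $\epsilon\le 2^{-8}$ enters only in the lower-bound construction, not in the upper bound.)

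The substantive gap is in your lower bounds for $d\in\{3,4,5\}$. Your ``key structural claim'' --- that small-diameter subgraphs of $K(m,k)$ have small chromatic number --- is false: the closed neighbourhood of any vertex $A$ induces on $N(A)$ a copy of $K(m-k,k)$, so already a diameter-$2$ subgraph has chromatic number $m-3k+2$, nearly that of the whole graph. Working directly inside a Kneser graph therefore cannot force many pieces. The paper's construction is indirect. It takes the bipartite \emph{incidence} graph $\mathrm{IG}(4k,k)$ between $\binom{[4k]}{k}$ and $[4k]$ (where two $k$-sets are at distance $<4$ exactly when they intersect), thickens it to a regular bipartite graph $H_k$, and then glues two copies of $H_k$ by a perfect matching on the $\binom{[4k]}{k}$-side. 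A cover by diameter-$4$ subgraphs now induces, via the matching edges, a colouring of $\binom{[4k]}{k}$ in which same-coloured $k$-sets must intersect, i.e.\ a proper colouring of $K(4k,k)$, forcing at least $2k+2=\Omega(\log n)$ pieces. The $d=5$ lower bound layers a further twist: take $d\sim\epsilon^{-1}$ disjoint copies of $H_k$ on each side, joined by a carefully designed cross-matching, so that any diameter-$5$ piece is confined to a single block-pair $(i,i')$; a pigeonhole over the $d^2$ pairs followed by the Kneser argument inside one pair yields $\Omega(\epsilon^{-2})$. This incidence-graph-plus-matching mechanism is the missing idea in your plan.
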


Note that any graph on $n$ vertices of minimum degree more than $\frac{n}{2}-1$ has diameter at most $2$, since
non-adjacent vertices must have a common neighbor. In sharp contrast with Theorem
\ref{thmq34}(a), this shows that $Q(n,1/2,2)=1$.

An analogous problem for hypergraphs was first investigated by Polcyn, R\"odl, Ruci\'nski, and Szemer\'edi \cite{PRRS}. A hypergraph
$G=(V,E)$ is {\it $k$-uniform} if each edge has exactly $k$ vertices. A (tight) {\it path} of length $\ell$ in a $k$-uniform hypergraph
$G=(V,E)$ is a subhypergraph consisting of $\ell+k-1$ vertices $v_1,\ldots,v_{\ell+k-1}$  and $\ell$ edges, such that for each $i
\leq \ell$, $(v_i,v_{i+1},\ldots,v_{i+k-1})$ is an edge of $G$. The vertices $v_1$ and $v_{\ell+k-1}$ are the endpoints of the path.
The {\it distance} between two vertices $v,w$ in a hypergraph is the length of the shortest path whose endpoints are $v$ and $w$.
The {\it diameter} $\textrm{diam}(G)$ of $G$ is the maximum distance between any two vertices of $G$.

\begin{definition} Let $P_k(n,\epsilon,d)$ be the smallest $\ell$ such that every $k$-uniform hypergraph $G=(V,E)$ on $n$ vertices
has an edge partition $E=E_0 \cup E_1 \cup \ldots \cup E_k$ such that $|E_0| \leq \epsilon n^k$ and the diameter of $E_i$ is at most
$d$ for $1 \leq i \leq k$.
\end{definition}

Polcyn et al.~\cite{PRRS} showed that $P_3(n,\epsilon,12)$ is bounded above by a constant $C_3(\epsilon)$ depending only on
$\epsilon$. Their proof uses the hypergraph regularity lemma, and gives an Ackermann-type upper bound on $C_3(\epsilon)$. Here we improve the
diameter bound from $12$ to $3$, which is best possible,
generalize it to any uniformity $k$, and further show that this function is polynomial in $\epsilon^{-1}$.
The proof uses similar counting arguments as done in the graph case.

\begin{theorem}\label{mainhyper}
We have $P_k(n,\epsilon,3)=O\left(\epsilon^{2-2^k}\right)$.
\end{theorem}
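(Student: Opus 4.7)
The plan is to prove a density-increment lemma by induction on the uniformity $k$, then apply a greedy peeling argument. The lemma asserts that any $k$-uniform hypergraph on $n$ vertices with at least $\epsilon n^k$ edges contains a subhypergraph of tight-diameter at most $3$ with $\Omega(\epsilon^{2^k - 2} n^k)$ edges. Granted this, one iteratively removes such a dense low-diameter subhypergraph from the current hypergraph until fewer than $\epsilon n^k$ edges remain; since each round removes $\Omega(\epsilon^{2^k - 2} n^k)$ edges and the total edge count is bounded by $n^k$, at most $O(\epsilon^{2 - 2^k})$ rounds are needed, after which the remaining edges form $E_0$.

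The base case $k = 2$ of the density lemma is the graph statement underlying Theorem~\ref{main1}(b): every graph with $\epsilon n^2$ edges has a diameter-$3$ subgraph with $\Omega(\epsilon^2 n^2)$ edges. For the inductive step, I would use averaging to find a vertex $v$ whose link $L_v = \{f \in \binom{V \setminus \{v\}}{k-1} : f \cup \{v\} \in E(G)\}$ is dense in the $(k-1)$-uniform sense, specifically $|E(L_v)| \geq k\epsilon n^{k-1}$. Apply the inductive density lemma to $L_v$ to obtain a diameter-$3$ sub-$(k-1)$-hypergraph $H'$ with $\Omega(\epsilon^{2^{k-1}-2} n^{k-1})$ edges. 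Lift $H'$ to $H = \{f \cup \{v\} : f \in H'\}$, a $k$-uniform star of $G$ centered at $v$ with the same number of edges. A direct check, analyzing the possible positions of $v$ inside a candidate tight path, shows that a $(k-1)$-tight path of length $\ell \leq 3$ in $H'$ lifts to a $k$-tight path of length $\ell$ in $H$ with $v$ occupying an interior position, so $H$ has tight-diameter at most $3$.

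The main obstacle, and the source of the crucial $\epsilon \to \epsilon^2$ doubling in the exponent, is bridging the factor-of-$n$ gap between a single lifted star of size $O(n^{k-1})$ and the target $\Omega(n^k)$ edges. To close this gap I would strengthen the averaging step to produce $\Omega(\epsilon n)$ different vertices $v$ with dense links, which exist by a reverse-Markov argument on the degree sequence, and aggregate the corresponding lifted stars. The total edge count across these stars, after accounting for the fact that each edge lies in at most $k$ stars, is $\Omega(\epsilon^{2^{k-1}-1} n^k)$, safely exceeding the target $\Omega(\epsilon^{2^{k}-2} n^k)$. The delicate step is showing that an appropriately chosen subcollection of these stars still has tight-diameter at most $3$; the key observation is that any two star centers $v, v'$ are themselves connected in $G$ by a short tight path through a common subedge of their intersecting links, so paths from one star to another can be routed through a shared ``pivot'' vertex without blowing up the diameter. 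Verifying this routing---by tracking which vertices can serve as interior vertices of a tight path, given that consecutive edges share $k-1$ vertices---is the technically most delicate part of the argument.
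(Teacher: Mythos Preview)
Your density lemma is stated with the wrong exponent, and the base case $k=2$ is false as stated. You assert that every graph with $\epsilon n^2$ edges contains a diameter-$3$ subgraph with $\Omega(\epsilon^{2} n^2)$ edges, but the bound actually underlying Theorem~\ref{main1}(b) is $\Omega(\epsilon^{3} n^2)$ (Lemma~\ref{mainfor3} gives $m^3/(32n^4)$ edges when $m=\epsilon n^2$), and this exponent is tight: by Lemma~\ref{Gnnp}, the random bipartite graph $G(n,n,p)$ with $p=\Theta(n^{-1/2})$ has edge density $\epsilon=\Theta(n^{-1/2})$, yet every diameter-$3$ subgraph has only $\Theta(\sqrt{n})=\Theta(\epsilon^{3}n^2)$ edges, far below the $\Theta(\epsilon^{2}n^2)=\Theta(n)$ your induction requires. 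With the correct exponent $2^k-1$ in the density lemma, your crude peeling bound $n^k/(\epsilon^{2^k-1}n^k)=\epsilon^{1-2^k}$ overshoots the target; one must use the dyadic summation as in the proofs of Theorems~\ref{thmfor3} and~\ref{thmfor4} to recover $O(\epsilon^{2-2^k})$.

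The aggregation step is a second genuine gap. You produce $\Omega(\epsilon n)$ lifted stars, but give no mechanism forcing any subcollection of them to have tight diameter at most $3$; two vertices with dense links need not share a common $(k-1)$-subedge, nor even lie on any short tight path in $G$, so the ``pivot'' routing you sketch has no foundation. The paper avoids induction on $k$ altogether: it fixes a single edge $e$, defines $G(e)$ as the subhypergraph induced by all vertices lying in an edge that meets $e$ in at least $k-1$ points, checks directly (Lemma~\ref{hypdiam3}) that $G(e)$ has diameter at most $3$, and lower-bounds $|E(G(e))|$ by counting copies of the complete $k$-partite hypergraph $K(2;k)$ via the Erd\H{o}s--Simonovits supersaturation bound (Lemma~\ref{lemsidcomplete}). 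Averaging over $e$ gives one diameter-$3$ piece with $\Omega(\epsilon^{2^k-1}n^k)$ edges, and dyadic peeling then yields $O(\epsilon^{2-2^k})$ parts.
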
\
In the other direction, we show that $P_k(n,\epsilon,3) \geq c_k \epsilon^{-k}$ for $\epsilon \gg n^{-1/2}$, which we think is
tight.

We study $P(n,\epsilon,d)$ in the next section, where
we consider the cases $d=2,3,4$ in three separate subsections. In Section \ref{largemindegreesection}, we study
edge partitions of graphs of large minimum degree into a small number of low diameter subgraphs.
In Section \ref{hypergraphsection}, we prove bounds on $P_k(n,\epsilon,d)$.
The last section of the paper contains some concluding remarks and open questions.
Throughout the paper, we systematically omit floor and ceiling signs whenever they are not
crucial for the sake of clarity of presentation. We also do not make any serious attempt to optimize absolute constants in our
statements
and proofs. All logarithms in this paper are in base $2$.

\section{Proof of Theorems \ref{main1}}

In this section, we prove bounds on $P(n,\epsilon,d)$. We consider the cases $d=2,3,4$ in separate subsections.

\subsection{Decomposing a graph into diameter 2 subgraphs}

In this subsection we prove Theorem \ref{main1}(a), which states that if $\epsilon<1/4$ is bounded away from $1/4$,
then $P(n,\epsilon,2)=\Theta(n)$. The proofs for both the upper bound and for the lower bound are quite simple. We
first show the upper bound. Let $G$ be a graph with $n$ vertices. Note that a star has diameter $2$. Letting $E_i$ be
those edges that contain the $i$th vertex of $G$, we have $P(n,0,2) \leq n$.

To prove the lower bound we use the following simple fact.
If $G$ is a bipartite graph, then
any subgraph of $G$ which is not complete has diameter at least $3$. We show
next that if $G$ is a random bipartite graph of edge density bounded away from $1$, then almost surely every complete
bipartite subgraph of $G$ has $O(n)$ edges. The {\it random bipartite graph} $G(n,n,p)$ is the probability space of
labeled bipartite graphs with $n$ vertices in each class, where each of the $n^2$ edges appears independently with
probability $p$. The term {\it almost surely} means with probability tending to $1$ as $n$ tends to infinity.

\begin{lemma}
Fix $0<p<1$ and let $q=1-p$. Almost surely, all complete bipartite subgraphs of the random bipartite graph $G=G(n,n,p)$ have at most
$2n/q$ edges.
\end{lemma}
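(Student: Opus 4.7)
The plan is a direct first-moment argument. Fix any pair of subsets $A \subseteq V_1$ and $B \subseteq V_2$ with $|A||B| > 2n/q$. The probability that all $|A||B|$ edges between $A$ and $B$ are present in $G=G(n,n,p)$ is exactly $p^{|A||B|}$. Using the elementary inequality $p = 1-q \leq e^{-q}$, together with the assumption $q|A||B| > 2n$, this probability is bounded above by $e^{-q|A||B|} < e^{-2n}$.

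Let $X$ denote the number of pairs $(A,B) \in 2^{V_1} \times 2^{V_2}$ with $|A||B| > 2n/q$ such that $G$ contains every edge between $A$ and $B$. Since there are at most $2^n \cdot 2^n = 4^n$ pairs of subsets, the union bound gives
$$\mathbb{E}[X] \leq 4^n \cdot e^{-2n} = \left(\tfrac{4}{e^2}\right)^n.$$
Because $4 < e^2$, this tends to $0$ as $n \to \infty$, so Markov's inequality yields $\Pr[X \geq 1] \to 0$. Therefore almost surely no complete bipartite subgraph of $G$ has more than $2n/q$ edges.

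The only quantitative content of the argument is the inequality $\ln 4 < 2$, which lets the wasteful subset count $4^n$ be absorbed by the exponential decay $e^{-2n}$ coming from the threshold $q|A||B| > 2n$. I do not expect a real obstacle here: a naive $\binom{n}{a}\binom{n}{b} p^{ab}$ union bound broken up by the individual sizes $a$ and $b$ would force one into a case analysis (for instance, $b$ close to $n$ and $a$ close to $2/q$ makes the binomial coefficients delicate), whereas collapsing the sum at the single threshold $q|A||B| > 2n$ sidesteps this entirely because the slack $e^2 - 4$ is a positive constant independent of $p$.
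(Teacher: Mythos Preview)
Your proof is correct and is essentially identical to the paper's own argument: a union bound over all $2^{2n}$ pairs of subsets, combined with the estimate $p^{|A||B|} \leq e^{-q|A||B|} < e^{-2n}$ on the threshold $q|A||B| > 2n$, giving a failure probability of at most $(4/e^2)^n = o(1)$. The paper records exactly this computation (modulo a sign typo in the exponent there), so there is nothing further to compare.
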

\begin{proof}
Let $A$ and $B$ be the two vertex classes of $G$. The probability that there is $S \subset A$ and $T \subset B$ that
are complete to each other and have at least $2n/q$ edges between them is at most $$p^{-2n/q}2^{2n} =
(1-q)^{-2n/q}2^{2n} \leq e^{-2n}2^{2n} = o(1).$$
\end{proof}

The Chernoff bound for the binomial distribution implies that the random graph $G(n,n,p)$ almost surely has
$pn^2+o(n^2)$ edges. Let $\epsilon<1/4$, $q=\frac{1}{4}-\epsilon$ and $p=\frac{3}{4}+\epsilon$. By considering
$G(n/2,n/2,p)$, we have that there is a bipartite graph $G$ on $n$ vertices with at least $(p-o(1))n^2/4
=\big(\epsilon+\frac{3q}{4}-o(1)\big)n^2$ edges and such that every diameter $2$ subgraph of $G$ has at most $2n/q$
edges. To
cover
$\big(\frac{3q}{4}-o(1)\big)n^2>qn^2/2$ edges of $G$ by diameter $2$ subgraphs, we need to use at least
$\frac{qn^2/2}{2n/q}=q^2n/4$ subgraphs. Hence
$P(n,\epsilon,2) \geq q^2n/4=\frac{(1-4\epsilon)^2}{64}n$ for $n$ sufficiently large. This completes the proof of
Theorem \ref{main1}(a). \qed

We conclude this subsection by showing that a sharp transition for $P(n,\epsilon,2)$ occurs at $\epsilon=1/4$. Namely,
$P(n,1/4,2)=1$. If a graph $G$ has a vertex which is adjacent to less than half of the other vertices, delete it.
Continue deleting vertices until all vertices in the remaining induced subgraph $G_1$ are adjacent to at least half of
the other vertices of $G_1$. Graph $G_1$ has diameter at most $2$ by the discussion after Theorem \ref{thmq34}. Let
$G_0$ be the subgraph whose edges are those containing a deleted vertex. Then $G_0$ has at most $\frac{1}{2}{n \choose
2} < n^2/4=\epsilon n^2$ edges and therefore $P(n,1/4,2)=1$.

\subsection{Decomposing a graph into diameter 3 subgraphs}
\label{sectdiam3}

Studying $P(n,\epsilon,d)$ appears to be most interesting in the case $d=3$. In this subsection, we prove Theorem \ref{main1}(b),
which establishes
$P(n,\epsilon,3)=\Theta(1/\epsilon^2)$ for $\epsilon \geq n^{-1/2}$. We begin with a few simple lemmas.

\begin{lemma} \label{lem123}
If $G$ is a graph with $n$ vertices and $m \geq 12n$ edges, then $G$ has at least $\frac{m^3}{16n^2}$ paths of length three.
\end{lemma}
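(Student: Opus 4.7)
The plan is to reduce to a subgraph of large minimum degree and then count paths of length three by extending a middle edge in both directions.

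First I would perform a degree reduction. Let $\delta_0 = m/(2n)$ and, starting from $G$, iteratively delete any vertex whose current degree is less than $\delta_0$. Since each deletion removes fewer than $\delta_0 = m/(2n)$ edges and at most $n$ vertices can ever be deleted, the total number of edges removed is strictly less than $n \cdot m/(2n) = m/2$. Thus the resulting induced subgraph $G'$ has more than $m/2$ edges and minimum degree at least $\delta_0 \geq 6$ (using the hypothesis $m \geq 12n$).

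Second, I would lower bound the number of length-3 paths in $G'$ by specifying the middle edge. For each ordered edge $(v_2, v_3)$ of $G'$ (there are more than $2 \cdot (m/2) = m$ of these), I pick $v_1$ to be a neighbor of $v_2$ different from $v_3$, which gives at least $\delta_0 - 1$ choices, and then $v_4$ to be a neighbor of $v_3$ different from $v_1$ and $v_2$, giving at least $\delta_0 - 2$ choices. Each unordered path $v_1 v_2 v_3 v_4$ is counted twice (once per orientation of the middle edge), so the number of (unordered) paths of length three in $G'$ is at least
\[
\frac{m}{2}\,(\delta_0 - 1)(\delta_0 - 2).
\]

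Third, I would finish by a short computation. Since $\delta_0 \geq 6$, we have $(\delta_0 - 1)(\delta_0 - 2) \geq \tfrac{5}{6}\delta_0 \cdot \tfrac{2}{3}\delta_0 = \tfrac{5}{9}\delta_0^2 \geq \tfrac{5}{9}\cdot \tfrac{m^2}{4n^2}$, so the number of paths is at least
\[
\frac{m}{2} \cdot \frac{5}{9} \cdot \frac{m^2}{4n^2} \;=\; \frac{5\,m^3}{72\,n^2} \;\geq\; \frac{m^3}{16\,n^2},
\]
as required. The only mild obstacle is keeping the constants tight enough through the degree reduction; the margin is small ($5/72$ versus $1/16$), which is why the threshold $m/(2n)$ together with the hypothesis $m \geq 12n$ (giving $\delta_0 \geq 6$) is chosen carefully, but nothing more delicate is needed.
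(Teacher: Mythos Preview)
Your proof is correct and follows essentially the same approach as the paper: iteratively delete low-degree vertices to obtain a subgraph with minimum degree at least $m/(2n)$ and at least $m/2$ edges, then count length-$3$ paths by fixing the middle edge and extending at both ends. The paper's write-up differs only cosmetically (it counts per unordered middle edge and asserts $(\tfrac{m}{2n}-1)(\tfrac{m}{2n}-2)\ge \tfrac{m^2}{8n^2}$ directly), yielding the same bound $\tfrac{m^3}{16n^2}$.
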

\begin{proof}
Delete from $G$ vertices of degree at most $\frac{m}{2n}$ one by one. The resulting induced subgraph $G'$
has at least $m-n\frac{m}{2n}=m/2$
edges, and has minimum degree at least $\frac{m}{2n}$. For each edge $e=(u,v)$ of $G'$, $u$ and $v$ each have at least
$\frac{m}{2n}$ neighbors, so the number of paths
of length three with middle edge $e$ is at least $(\frac{m}{2n}-1)(\frac{m}{2n}-2) \geq \frac{m^2}{8n^2}$ as there are at least
$\frac{m}{2n}-1$ possible choices for the first vertex of the path, and, given the first three vertices of the path, at least
$\frac{m}{2n}-2$ remaining possible last vertices for the path. Counting over all $m/2$ possible
middle edges, we obtain that the number of paths of length three in $G'$ (and hence in $G$) is at least
$\frac{m}{2}\frac{m^2}{8n^2}=\frac{m^3}{16n^2}$.
\end{proof}

The next definition demonstrates how to construct a subgraph of diameter at most $3$ from a graph and a pair of its vertices.

\begin{definition}\label{defdiam}
For a graph $G$ and vertices $v$ and $w$ of distance at most $d$, let $G_d(v,w)$ be the induced subgraph of $G$ consisting of all
vertices
of $G$ that lie on a walk from $v$ to $w$ of length at most $d$.
\end{definition}

\begin{lemma}\label{diamatmostd}
The graph $G_d(v,w)$ has diameter at most $d$.
\end{lemma}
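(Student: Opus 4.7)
The plan is to show that for any two vertices $x, y$ in $G_d(v,w)$, the distance between them in $G_d(v,w)$ is at most $d$, by exhibiting an explicit walk of length at most $d$ that stays inside the induced subgraph.

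First I would unpack the definition. By construction, every vertex $x$ of $G_d(v,w)$ lies on some walk $W_x$ from $v$ to $w$ of length $\ell_x \le d$. Reading $W_x$ forward from $v$ to $x$ gives a walk from $v$ to $x$ of some length $a_x$, and reading the remainder gives a walk from $x$ to $w$ of length $\ell_x - a_x$. Crucially, every vertex appearing along $W_x$ itself lies on a walk from $v$ to $w$ of length at most $d$ (namely $W_x$), so all such intermediate vertices belong to $G_d(v,w)$, and every edge of $W_x$ is an edge of the induced subgraph.

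Now fix two vertices $x, y \in G_d(v,w)$ with associated walks $W_x, W_y$ of lengths $\ell_x, \ell_y \le d$, and corresponding splits $a_x, a_y$. I can form two candidate $x$–$y$ walks inside $G_d(v,w)$: concatenate the $x$-to-$v$ portion of $W_x$ with the $v$-to-$y$ portion of $W_y$ (length $a_x + a_y$), or concatenate the $x$-to-$w$ portion of $W_x$ with the $w$-to-$y$ portion of $W_y$ (length $(\ell_x - a_x) + (\ell_y - a_y)$). The sum of the two candidate lengths is $\ell_x + \ell_y \le 2d$, so by averaging the shorter candidate has length at most $d$.

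There is essentially no obstacle here; the only thing to be careful about is to verify that the walks used lie entirely in the induced subgraph $G_d(v,w)$, which follows immediately from the remark above that every vertex of $W_x$ (resp.\ $W_y$) is automatically in $G_d(v,w)$. This gives $\mathrm{dist}_{G_d(v,w)}(x,y) \le d$ for all pairs $x,y$, completing the proof.
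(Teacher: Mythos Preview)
Your proof is correct and follows essentially the same argument as the paper: split each defining walk at the chosen vertex, concatenate through $v$ or through $w$, and use that the two resulting lengths sum to at most $2d$. If anything, you are more careful than the paper in explicitly checking that the concatenated walk stays inside the induced subgraph $G_d(v,w)$, a point the paper's proof leaves implicit.
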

\begin{proof}
Let $a$ and $b$ be vertices of $G_d(v,w)$. So $a$ is on a walk from $v$ to $w$ of length at most $d$, and $b$ is on a walk from $v$
to $w$ of length at most $d$. These two walks give rise to two walks from $a$ to $b$ such that the sum of the lengths of these two
walks is at most $2d$. Hence, there is a path from $a$ to $b$ of length at most $d$. This shows that $G_d(v,w)$ has diameter at most
$d$.
\end{proof}

\begin{lemma}\label{mainfor3}
If a graph $G$ has $n$ vertices and $m \geq 12n$ edges, then it contains an induced subgraph $H$ with at least $\frac{m^3}{32n^4}$
edges that has diameter at most $3$.
\end{lemma}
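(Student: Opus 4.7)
The plan is to find vertices $v, w$ such that the diameter-$3$ induced subgraph $G_3(v,w)$ from Definition~\ref{defdiam} contains at least $\frac{m^3}{32n^4}$ edges, and then take $H = G_3(v,w)$. Lemma~\ref{diamatmostd} already guarantees that $H$ has diameter at most $3$, so only the edge count remains to be established.

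The pivotal observation I would use is that for every length-three path $v - a - b - w$ in $G$, the middle edge $\{a,b\}$ lies in $G_3(v,w)$, since both $a$ and $b$ sit on the length-three walk $v - a - b - w$ from $v$ to $w$. Hence $|E(G_3(v,w))|$ is bounded below by the number of distinct edges $\{a,b\}$ that appear as the middle edge of some length-three path whose endpoint set is exactly $\{v,w\}$.

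From here the argument is counting and averaging. Lemma~\ref{lem123} supplies at least $\frac{m^3}{16n^2}$ length-three paths in $G$. Writing $f(v,w)$ for the number of such paths with endpoint set $\{v,w\}$, we get $\sum_{\{v,w\}} f(v,w) \geq \frac{m^3}{16n^2}$, summed over unordered pairs. For a fixed endpoint pair $\{v,w\}$, a single unordered edge $\{a,b\}$ can serve as the middle edge of at most two such paths, corresponding to the two choices of which of $a, b$ is adjacent to $v$; therefore $|E(G_3(v,w))| \geq f(v,w)/2$. Averaging over the $\binom{n}{2} < n^2/2$ unordered pairs $\{v,w\}$ then produces some pair with $|E(G_3(v,w))| \geq \frac{m^3}{16n^4}$, comfortably beating the required $\frac{m^3}{32n^4}$, and we set $H = G_3(v,w)$.

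I do not anticipate a real obstacle. The only step demanding a moment of care is recognizing the factor-of-two double counting between unordered middle edges and length-three paths with a prescribed endpoint pair; the constant $32$ in the statement leaves enough slack to absorb this loss without further work.
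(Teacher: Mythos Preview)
Your proposal is correct and follows essentially the same route as the paper's own proof: count length-three paths via Lemma~\ref{lem123}, average over endpoint pairs to find a pair $(v,w)$ with many such paths, and then observe that each middle edge is counted at most twice to lower-bound $|E(G_3(v,w))|$. Your bookkeeping is in fact slightly tighter (averaging over $\binom{n}{2}<n^2/2$ unordered pairs rather than $n^2$), which is why you land at $\frac{m^3}{16n^4}$ instead of the paper's $\frac{m^3}{32n^4}$, but the argument is otherwise identical.
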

\begin{proof}
By Lemma \ref{lem123}, $G$ has at least $\frac{m^3}{16n^2}$ paths of length three. By
averaging, there is a pair $u,v$ of vertices of $G$ such that the number of paths of length three
with terminal vertices $u$ and $v$ is at least
$\frac{m^3}{16n^4}$. By Lemma \ref{diamatmostd}, $G_3(u,v)$ has diameter at most $3$ and in every paths of length three
from $u$ to
$v$, the middle edge is an edge of $G_3(u,v)$. Moreover, each edge in $G_3(u,v)$ is the middle edge of at most two paths from $u$ to
$v$,
hence $G_3(u,v)$ has at least $\frac{m^2}{32n^4}$ edges.
\end{proof}

We now prove a quantitative version of the upper bound on $P(n,\epsilon,3)$ in Theorem \ref{main1}(b).

\begin{theorem}
\label{thmfor3}
Every graph $G$ on $n$ vertices can be edge partitioned $E=E_0 \cup E_1 \cup \ldots \cup E_k$ such that $|E_0| \leq \epsilon n^2$,
$k \leq 50\epsilon^{-2}$, and for $1 \leq i \leq k$, the diameter of $E_i$ is at most $3$.
\end{theorem}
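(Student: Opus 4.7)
The plan is to greedily extract diameter-$3$ subgraphs from $G$ one at a time using Lemma~\ref{mainfor3}, and to control the total number of extractions via a carefully chosen monovariant on the remaining edge count. Set $G_0 := G$ and, for $i \ge 0$, let $m_i := |E(G_i)|$. While $m_i > \epsilon n^2$, apply Lemma~\ref{mainfor3} to $G_i$ to obtain an induced subgraph $H_i \subseteq G_i$ of diameter at most $3$ with $|E(H_i)| \ge m_i^3/(32 n^4)$. Set $E_i := E(H_i)$ and $G_{i+1} := G_i \setminus E_i$; when the loop terminates at step $k$, let $E_0 := E(G_k)$. The diameter bound on each $E_i$ and the bound $|E_0| \le \epsilon n^2$ are immediate from the construction.

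The heart of the argument is bounding $k$. I would use the potential $\phi(m) := n^4/m^2$, which grows as $m$ shrinks. Writing $\alpha_i := m_i - m_{i+1} \ge m_i^3/(32 n^4)$ and using $m_i + m_{i+1} \ge 2 m_{i+1}$,
\[
\phi(m_{i+1}) - \phi(m_i) \;=\; n^4 \cdot \frac{\alpha_i (m_i + m_{i+1})}{m_i^2 m_{i+1}^2} \;\ge\; n^4 \cdot \frac{(m_i^3 / (32 n^4)) \cdot 2 m_{i+1}}{m_i^2 m_{i+1}^2} \;=\; \frac{m_i}{16 m_{i+1}} \;\ge\; \frac{1}{16}.
\]
Thus each iteration pushes $\phi$ up by at least $1/16$, starting from $\phi(m_0) \ge 0$. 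Because the iteration at step $i$ is performed only while $m_i > \epsilon n^2$, one has $\phi(m_i) < 1/\epsilon^2$ throughout, and telescoping forces $k \le 16/\epsilon^2 + 1 \le 50/\epsilon^2$.

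A minor caveat is that Lemma~\ref{mainfor3} requires at least $12n$ edges, which could fail once $\epsilon n^2 < 12n$. If $\epsilon < 12/n$ from the outset, I would instead use the trivial decomposition into $n$ edge-disjoint stars centered at the vertices of $G$: each star has diameter at most $2$, and $n \le 50n^2/144 \le 50/\epsilon^2$ in this range (for $n$ at least a small absolute constant), so this still fits within the desired count, with $E_0 = \emptyset$. The only substantive step in the plan is the potential-function calculation; the point of choosing $\phi = n^4/m^2$ is that the per-step edge drop guaranteed by Lemma~\ref{mainfor3} translates into a \emph{constant} increment in $\phi$ per iteration, which is exactly what yields the $O(1/\epsilon^2)$ scaling rather than the weaker $O((\log \epsilon^{-1})/\epsilon^2)$ that a naive multiplicative-decay argument would give.
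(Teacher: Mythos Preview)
Your proposal is correct and follows the same overall strategy as the paper: greedily pull out diameter-$3$ subgraphs via Lemma~\ref{mainfor3} and bound the total number of extractions. The only difference is in the bookkeeping for that bound --- the paper groups iterations into dyadic phases (counting how many steps it takes to halve the edge count and summing the resulting geometric series $\sum_i 2^{7-2i}\epsilon^{-2}$), whereas you track the potential $\phi(m)=n^4/m^2$ and show it rises by at least $1/16$ per step. Both arguments are standard; yours is a bit cleaner and even yields the sharper constant $16\epsilon^{-2}+1$ in place of $\tfrac{128}{3}\epsilon^{-2}$.
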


\begin{proof} We repeatedly use Lemma \ref{mainfor3} to pull out subgraphs of diameter at most $3$ until the remaining subgraph has
at most $\epsilon n^2$ edges. The remaining at most $\epsilon n^2$ edges make up $E_0$.
If the current graph has at least $m/2$ edges, then by the above lemma we can find a subgraph of diameter at most $3$ with at
least $\frac{(m/2)^3}{32n^4}$ edges. Therefore, after pulling out
$s= (m-m/2)/\frac{(m/2)^3}{32n^4}=128n^4/m^2$ such subgraphs of diameter at most $3$, we remain with at most $m/2$ edges.
Similarly, applying this process to a subgraph with at most $2^i \epsilon n^2$ edges we get a subgraph with at most
$2^{i-1}\epsilon n^2$ edges, after pulling out at most
$\frac{128n^4}{(2^i\epsilon n)^2}=2^{7-2i}\epsilon^{-2}$ subgraphs of diameter $3$.
Summing over all $i \geq 1$, we obtain that altogether
we pull out at most $\sum_{i=1}^{\infty}
2^{7-2i}\epsilon^{-2}=\frac{2^7}{3}\epsilon^{-2}<50\epsilon^{-2}$ subgraphs.
\end{proof}

Next we establish a lower bound on $P(n,\epsilon,3)$, using the following two lemmas.

\begin{lemma}\label{Gnnp}
Almost surely the subgraph of diameter at most $3$ of the random bipartite graph $G(n,n,p)$ with $p=\frac{1}{4\sqrt{n}}$ with the
maximum number of
 edges has $(1+o(1))(2pn+p^3n^2)=\big(\frac{33}{64}+o(1)\big)\sqrt{n}$ edges.
\end{lemma}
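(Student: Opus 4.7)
The plan is to prove matching lower and upper bounds.

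\emph{Lower bound.} For any edge $(u,v)\in E(G)$ with $u\in A,\,v\in B$, the induced subgraph $G_3(u,v)=G[N_G(u)\cup N_G(v)]$ has diameter at most $3$ by Lemma~\ref{diamatmostd}. Its edges split into the stars at $u,v$ (contributing $\deg_G(u)+\deg_G(v)-1$ after correcting the double-counted edge $uv$) plus the cross count $W(u,v)$ of $G$-edges between $N_G(v)\setminus\{u\}$ and $N_G(u)\setminus\{v\}$. By Chernoff, $\deg_G(w)=(1+o(1))pn$ uniformly over $w$; conditional on the neighbourhoods, $W(u,v)$ is Binomial with mean $(1+o(1))p^3n^2=\Theta(\sqrt{n})$, so a Chernoff bound with failure probability $\exp(-\Omega(\sqrt{n}))$ together with a union bound over the $\Theta(n^{3/2})$ edges of $G$ gives $|E(G_3(u,v))|=(1+o(1))(2pn+p^3n^2)$ simultaneously for every edge.

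\emph{Upper bound.} Let $H$ be a diameter-$\leq 3$ subgraph with $m=|E(H)|\geq 1$ and fix an edge $(u,v)\in E(H)$; set $A'=V(H)\cap A$, $B'=V(H)\cap B$. Since $H$ is bipartite of diameter $\leq 3$, every pair in $A'$ has a common neighbour in $B'$, and counting cherries gives
\[
\binom{|A'|}{2}\leq\sum_{b\in B'}\binom{\deg_H(b)}{2}\leq\tfrac{1}{2}(D-1)\,m,
\]
with $D=\max_{b\in B}\deg_G(b)\leq(1+o(1))pn$ a.s., so $|A'|\leq(1+o(1))\sqrt{pn\cdot m}$, and symmetrically for $|B'|$. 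I then bound $m\leq|E(G)\cap(A'\times B')|$ and decompose the right-hand side into the two stars at $u,v$ (contributing at most $\deg_G(u)+\deg_G(v)-1\leq(2+o(1))pn$) plus the cross edges in $(A'\setminus\{u\})\times(B'\setminus\{v\})$, the latter bounded by $(1+o(1))|A'|\,|B'|\,p$ via Chernoff made uniform by the cherry size bounds. Combining and iterating yields $m\leq(1+o(1))(2pn+p^3n^2)$.

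The main technical obstacle is making the Chernoff concentration for the cross count uniform over all $A',B'$, and sharpening the constants: a naive cherry-plus-Chernoff argument yields only $m\leq O(pn+p^3n^2)$. Recovering the tight leading constant requires observing that in any near-extremal configuration only two vertices (namely $u$ and $v$) can have degree comparable to $pn$, while the remaining vertices of $B'$ satisfy $\deg_H(b)=1+o(1)$; this structural dichotomy identifies the $2pn$ contribution as the star edges at the two central vertices and the $p^3n^2$ contribution as the cross bulk, closing the bound at the claimed constant.
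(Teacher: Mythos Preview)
Your lower bound matches the paper's and is correct.

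The upper bound, however, has two genuine gaps.

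\textbf{The uniform Chernoff bound fails.} You want $(1+o(1))|A'||B'|p$ edges between $A'\setminus\{u\}$ and $B'\setminus\{v\}$ uniformly over all admissible $A',B'$. But once you know $|A'|,|B'|=\Theta(pn)=\Theta(\sqrt n)$, the number of such pairs of subsets is $\binom{n}{\Theta(\sqrt n)}^2=\exp\big(\Theta(\sqrt n\log n)\big)$, whereas the Chernoff failure probability for a binomial with mean $\Theta(p^3n^2)=\Theta(\sqrt n)$ is only $\exp\big(-\Theta(\sqrt n)\big)$. The union bound does not close. The paper sidesteps this by proving $(1+o(1))$-concentration only for the $O(n^2)$ pairs of \emph{neighbourhoods} $N(a),N(b)$ (its Lemma~\ref{lem1}(c),(d)), and for arbitrary $A',B'$ settles for the crude bound $6\max(|A'||B'|p,(|A'|+|B'|)\log n)$ (Lemma~\ref{lem1}(f)), which is applied only to small leftover pieces.

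\textbf{The structural dichotomy is asserted, not proved.} You fix an \emph{arbitrary} edge $(u,v)\in E(H)$ and then claim that ``only two vertices (namely $u$ and $v$) can have degree comparable to $pn$''. There is no reason the arbitrary $u,v$ are the high-degree vertices; one must prove that such vertices \emph{exist}. The paper does this (Claims~\ref{claim2} and~\ref{claim3}) via a property you never invoke: almost surely every pair of vertices in the same class has at most $\log n$ common neighbours (Lemma~\ref{lem1}(b)). This codegree bound is what forces the degree sequence in $H$ to collapse after the top vertex, and it is the essential missing ingredient. Without it, your cherry bound gives only $|A'|\le(1+o(1))\sqrt{pn\cdot m}$; plugging this into your decomposition yields the recursion $m\le(2+o(1))pn+(1+o(1))p^2n\cdot m$, which solves to $m\le\frac{32}{15}pn+o(pn)$, strictly worse than the target $\frac{33}{16}pn$. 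The tight constant cannot be recovered by iteration alone.

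In short: the paper's route is to first locate a near-universal vertex $y$ on each side (using the codegree bound), deduce $|X|,|Y|\le(1+o(1))pn$, and only then apply sharp concentration to the single pair $N(x),N(y)$, mopping up the $o(\sqrt n)$ remainder with crude bounds. Your sketch conflates the crude and sharp estimates and omits the codegree step that makes the structure emerge.
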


The {\it neighborhood} $N(v)$ of a vertex $v$ in a graph $G$ is the set of vertices adjacent to $v$. To see that there is almost
surely a subgraph of diameter $3$ of $G(n,n,p)$ with $(1+o(1))(2pn+p^3n^2)$ edges, let $a$ and $b$ be adjacent vertices in different
vertex classes of $G(n,n,p)$, and consider the induced subgraph $H$ with vertex set $N(a) \cup N(b)$, which has diameter at most
$3$. It follows from Chernoff's bound for the binomial distribution that almost surely all vertices of $G(n,n,p)$ have degree
$(1+o(1))pn$. Hence, the number of edges containing $a$ or $b$ is $(1+o(1))2pn$. Furthermore,  given $|N(a)|$ and $|N(b)|$, the
number of edges between $N(a) \setminus \{b\}$ and $N(b) \setminus \{a\}$ follows a binomial distribution, and another application
of Chernoff's bound for the binomial distribution implies that the number of these edges is $(1+o(1))p^3n^2$.
Hence $H$ has $(1+o(1))(2pn+p^3n^2)$ edges. To prove Lemma \ref{Gnnp}, it thus suffices to show that {\it every} diameter
$3$ subgraph has at most $(1+o(1))(2pn+p^3n^2)$ edges.

Since almost surely the number of edges of $G(n,n,p),~p=\frac{1}{4\sqrt{n}}$ is concentrated around it
expected value $pn^2$, we have the following corollary. For all sufficiently large $n$, there is a bipartite graph on $2n$ vertices
with at least
$\frac{1}{5}n^{3/2}$ edges in which any diameter at most $3$ subgraph has at most $\frac{4}{5}n^{1/2}$ edges.

For a graph $G$, the {\it blow-up} $G(r)$ denotes the graph formed by replacing each vertex $v_i$ of $G$
by an independent set $V_i$ of size $r$, where vertices $u \in V_i$ and $w \in V_j$ are adjacent in $G(r)$
if and only if $v_i$ and $v_j$ are adjacent in $G$.

\begin{lemma}\label{blowuplem}
If any subgraph of a graph $G$ with diameter at most $d$ has at most $m$ edges,
then any subgraph of $G(r)$ with diameter at most $d$ has at most $r^2m$ edges.
\end{lemma}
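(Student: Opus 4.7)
The plan is to project a putative diameter-$d$ subgraph $H \subseteq G(r)$ down to a diameter-$d$ subgraph $H'\subseteq G$, and then to compare edge counts via the projection.

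Concretely, for each pair $i\ne j$ declare $v_iv_j$ to be an edge of $H'$ exactly when $H$ contains at least one edge between the blow-up classes $V_i$ and $V_j$. Then $H'$ is a subgraph of $G$ (any edge of $G(r)$ between $V_i$ and $V_j$ forces $v_iv_j\in E(G)$ by the definition of the blow-up), and the map sending an edge $uw$ of $H$ with $u\in V_i$, $w\in V_j$ to the edge $v_iv_j$ of $H'$ is well defined with fibers of size at most $r^{2}$, since there are only $r$ choices of $u$ in $V_i$ and $r$ choices of $w$ in $V_j$. Thus $|E(H)|\le r^{2}|E(H')|$.

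The main step is to verify that $H'$ has diameter at most $d$; once this is done the hypothesis gives $|E(H')|\le m$ and the bound $|E(H)|\le r^{2}m$ follows. So take any two vertices $v_i,v_j$ of $H'$. By construction each of $V_i,V_j$ contains at least one vertex incident to an edge of $H$; pick such $u\in V_i$ and $w\in V_j$, which are therefore vertices of $H$. Since $H$ has diameter at most $d$, there is a path $u=x_0,x_1,\ldots,x_k=w$ in $H$ with $k\le d$. Let $x_\ell\in V_{i_\ell}$; because each $V_{i_\ell}$ is an independent set in $G(r)$, consecutive indices satisfy $i_\ell\ne i_{\ell+1}$, and the edge $x_\ell x_{\ell+1}$ of $H$ witnesses that $v_{i_\ell}v_{i_{\ell+1}}$ is an edge of $H'$. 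Hence $v_i=v_{i_0},v_{i_1},\ldots,v_{i_k}=v_j$ is a walk of length at most $d$ in $H'$, proving $\mathrm{diam}(H')\le d$.

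No serious obstacle is expected; the only point to be careful about is justifying that consecutive vertices of the lifted path land in distinct blow-up classes, which is exactly the independence of the $V_i$ in $G(r)$. Everything else is bookkeeping on the projection map.
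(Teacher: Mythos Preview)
Your proof is correct and follows essentially the same approach as the paper: project $H$ down to a subgraph $H'$ of $G$, argue that paths in $H$ project to walks in $H'$ so $\mathrm{diam}(H')\le d$, and then use the edge bound on $H'$ together with the $r^2$-to-$1$ nature of the projection. The only cosmetic difference is that the paper takes $H'$ to be the \emph{induced} subgraph of $G$ on $\{v_i: V_i\cap V(H)\ne\emptyset\}$, whereas you take the (generally smaller) edge-projection; either choice works, and your version makes the edge count slightly more direct while requiring the explicit path-projection argument you supplied.
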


\begin{proof} Let $H$ be a subgraph of $G(r)$ of diameter at most $d$. Let $H'$ be the induced subgraph of $G$ where $v_i$ is a
vertex of $H'$ if there is a vertex of $H$ in $V_i$. It is clear from the definition of $G(r)$ that the diameter of $H'$ is
at most the diameter of $H$. Thus $H'$ also has diameter at most $d$ and so it has at most $m$ edges.
Then $H$ has at most $r^2m$ edges,
since for every edge
$(v_i,v_j)$ of $H'$ there are at most $r^2$ edges of $H$ in $V_i
\times V_j$. \end{proof}

>From Lemmas \ref{Gnnp} and \ref{blowuplem}, we quickly deduce a lower bound on $P(n,\epsilon,3)$.
Of course, we are assuming here that $\epsilon<1/2$ as otherwise $\epsilon n^2 \geq {n \choose 2}$ and $P(n,\epsilon,3)=0$ since
we can let $E_0$ consist of all edges of the graph.

\begin{theorem}\label{diam3lwrb}
We have $P(n,\epsilon,3) \geq c\epsilon^{-2}$ for some absolute constant $c>0$.
\end{theorem}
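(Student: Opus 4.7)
The plan is to derive the lower bound by exhibiting an explicit construction. I would build a graph on $n$ vertices by taking the extremal bipartite graph guaranteed by the corollary after Lemma \ref{Gnnp} and blowing it up with Lemma \ref{blowuplem}, choosing the parameters so that the ``density'' matches $\epsilon$.

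More precisely, first choose an integer $n_0$ of order $\epsilon^{-2}$; any choice like $n_0 = \lfloor c_1 \epsilon^{-2} \rfloor$ with $c_1$ a sufficiently small absolute constant will do, and the hypothesis $\epsilon \geq n^{-1/2}$ guarantees $n_0 \leq n/2$, so we can set $r = \lfloor n/(2n_0) \rfloor$ and obtain an honest blow-up on at most $n$ vertices (adding isolated vertices if necessary). By the corollary following Lemma \ref{Gnnp} there is a bipartite graph $G_0$ on $2n_0$ vertices with at least $\tfrac{1}{5}n_0^{3/2}$ edges in which every subgraph of diameter at most $3$ has at most $\tfrac{4}{5}n_0^{1/2}$ edges. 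Let $G = G_0(r)$.

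Now I would count. The graph $G$ has $e(G) \geq r^2 \cdot \tfrac{1}{5}n_0^{3/2}$ edges, while Lemma \ref{blowuplem} bounds the number of edges in any diameter-$3$ subgraph by $r^2 \cdot \tfrac{4}{5}n_0^{1/2}$. Since $n \leq 2n_0 r$, the ``trash'' bound satisfies
\[
\epsilon n^2 \leq 4\epsilon n_0^2 r^2.
\]
With $n_0 \leq c_1 \epsilon^{-2}$ for small enough $c_1$, the inequality $4\epsilon n_0^2 \leq \tfrac{1}{10}n_0^{3/2}$ holds, so after discarding the $E_0$ edges at least $\tfrac{1}{10}r^2 n_0^{3/2}$ edges of $G$ remain to be covered by the diameter-$3$ parts $E_1,\ldots,E_\ell$. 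Dividing by the per-part capacity gives
\[
\ell \;\geq\; \frac{\tfrac{1}{10}r^2 n_0^{3/2}}{\tfrac{4}{5}r^2 n_0^{1/2}} \;=\; \frac{n_0}{8} \;\geq\; c\,\epsilon^{-2}
\]
for an absolute constant $c>0$, which is the desired lower bound.

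The only nontrivial ingredient is Lemma \ref{Gnnp}, which supplies the tight upper bound on diameter-$3$ subgraphs of the random bipartite graph at the critical density $p = 1/(4\sqrt n)$; everything after that is a routine blow-up and accounting argument. The one place to be careful is the rounding in the choice of $n_0$ and $r$: we need $n_0$ large enough that $n_0^{3/2}/5$ comfortably exceeds $4\epsilon n_0^2$ (handled by choosing $c_1$ small) and small enough that $r = \lfloor n/(2n_0)\rfloor \geq 1$ (handled by the hypothesis $\epsilon \geq n^{-1/2}$). Both requirements leave ample slack, so absorbing the floor functions into the constant $c$ is harmless.
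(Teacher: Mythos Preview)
Your proposal is correct and follows essentially the same route as the paper: pick the extremal bipartite graph on $2n_0$ vertices from the corollary after Lemma~\ref{Gnnp} with $n_0$ of order $\epsilon^{-2}$, blow it up via Lemma~\ref{blowuplem}, and compare edge counts. One small slip: with $r=\lfloor n/(2n_0)\rfloor$ you have $2n_0 r \le n$, not $n \le 2n_0 r$; the usable inequality is $n \le 2n_0(r+1) \le 4n_0 r$ once $r\ge 1$, which only changes the constants---exactly the kind of rounding loss you already flag in your final paragraph.
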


\begin{proof}
Let $t=(40\epsilon)^{-2}$ and $r=\frac{n}{2t}$.
By choosing an appropriate constant $c$ we may suppose that $\epsilon$ is sufficiently small
and so $t$ is sufficiently large. Therefore, by Lemma \ref{Gnnp},
there is a bipartite graph $G$ with $2t$ vertices, at least $\frac{1}{5}t^{3/2}$ edges
such that every diameter $3$ subgraph of $G$ has
at most $\frac{4}{5}t^{1/2}$ edges. The blow-up
graph $G(r)$ has $n$ vertices, at least $\frac{1}{5}t^{3/2}r^2 = 2\epsilon n^2$ edges,
and Lemma \ref{blowuplem} shows that any subgraph of $G(r)$ with diameter at most $3$ has at most $\frac{4}{5}r^2t^{1/2} =
t^{-3/2}n^2/5=8(40)^2\epsilon^3n^2$ edges.
Thus $P(n,\epsilon,3) \geq \frac{\epsilon n^2}{8(40)^2\epsilon^3 n^2} \geq \frac{1}{8(40)^2}\epsilon^{-2}$, which completes the
proof.
\end{proof}

We next include a simple characterization of bipartite graphs of diameter at most $3$.

\begin{proposition}\label{diam3prop}
A bipartite graph $G$ with at least three vertices has diameter at most $3$ if and only if each pair of vertices in the same vertex
class have a common neighbor.
\end{proposition}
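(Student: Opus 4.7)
The plan is to verify both directions separately, using the standard parity observation that in a bipartite graph any two vertices of the same class are at even distance (possibly infinite).

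For the forward direction, suppose $G$ has diameter at most $3$, and let $u,v$ be two distinct vertices lying in the same class. Since $G$ is bipartite, any walk between $u$ and $v$ has even length, so in particular the distance $d(u,v)$ is even. By hypothesis $d(u,v)\leq 3$, and since $u\neq v$ we have $d(u,v)\geq 2$, so $d(u,v)=2$. A shortest path of length $2$ from $u$ to $v$ supplies the desired common neighbor.

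For the reverse direction, assume every pair of vertices in the same class has a common neighbor. Let $A$ and $B$ denote the two classes and let $u,v$ be any two vertices of $G$. If $u$ and $v$ lie in the same class, the hypothesis immediately gives $d(u,v)\leq 2$. Otherwise, say $u\in A$ and $v\in B$. The main case is when both classes contain at least two vertices: pick any vertex $u'\in A$ with $u'\neq u$; by hypothesis $u$ and $u'$ share a common neighbor $w\in B$. Now $w,v\in B$, so they share a common neighbor $x\in A$. Then $u\text{-}w\text{-}x\text{-}v$ is a walk of length $3$, hence $d(u,v)\leq 3$.

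The remaining step — and the one place the assumption of at least three vertices gets used — is handling the degenerate cases in which one class has a single vertex. If $A=\{u\}$ and $|B|\geq 2$, then every pair in $B$ must have a common neighbor in $A$, so $u$ is adjacent to every vertex of $B$; this yields diameter at most $2$. The symmetric case $|B|=1$, $|A|\geq 2$ is identical. The only remaining configuration under $|V(G)|\geq 3$ is $|A|\geq 2$ and $|B|\geq 2$, already treated. No serious obstacle is expected: the even-distance remark handles one direction in a line, and the only subtlety in the other direction is remembering to verify the one-vertex-class corner cases, which is exactly what the three-vertex hypothesis exists to rule into a meaningful configuration.
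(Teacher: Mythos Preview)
Your argument is correct and matches the paper's: parity handles the forward direction, and two applications of the common-neighbor hypothesis build a length-$3$ walk for the reverse. The paper avoids your case split on class sizes by taking $|A|\ge|B|$ without loss of generality (so $|A|\ge 2$ automatically), and it explicitly separates the sub-case $w=v$---where $u$ is already adjacent to $v$---before invoking the hypothesis on the pair $\{w,v\}$; you glossed over this, but it is harmless.
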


\begin{proof} Let $A$ and $B$ be the vertex classes of bipartite $G$ with $|A| \geq |B|$. If a pair of vertices in the same vertex
class does not have a common neighbor, then the shortest path between them must be even, have length at least four, and hence $G$
has diameter at least four.

Conversely, suppose each pair of vertices in the same vertex class have a common neighbor. If two vertices are in the same vertex
class, then there is a path of length two between them. If $a \in A$ and $b \in B$, then there is another vertex $a' \in A$, and
hence $a$ and $a'$ have a neighbor $b'$ in $B$. If $b'=b$, then there is a path of length one between $a$ and $b$. Otherwise, $b'$
and $b$ have a common neighbor, so there is a path between $a$ and $b$ of length at most three. This shows that the distance between
any pair of vertices of $G$ is at most $3$, i.e., $G$ has diameter at most $3$. \end{proof}

Notice that a bipartite graph with diameter less than $3$ must be a complete bipartite
graph, as if there is a pair of vertices in different classes that are not adjacent, then the shortest path between them is of
odd length greater than $1$, and so the diameter is at least $3$. Hence a bipartite graph with at least three vertices has diameter
exactly three if and only if
it is not a complete bipartite graph and each pair of vertices in the same vertex class have a common neighbor.

Our goal for the rest of the subsection is to prove Lemma \ref{Gnnp}. We will assume that $p=\frac{1}{4\sqrt{n}}$, $n$ is
sufficiently large,
and let $A$ and $B$ denote the vertex sets of size $n$ of $G(n,n,p)$. We first need to collect several basic lemmas about the edge
distribution in $G(n,n,p)$.

\begin{lemma}\label{lem1} $G(n,n,p)$ almost surely has the following six properties. \\
(a) Every vertex has degree $(1+o(1))pn$. \\
(b) Every pair of vertices have at most $\log n$ common neighbors.\\
(c) For every edge $(a,b)$, there are $(1+o(1))(2pn+p^3n^2)$ edges between $N(a)$ and $N(b)$.\\
(d) For all $a \in A$ and $b \in B$ non-adjacent, there are $(1+o(1))p^3n^2$ edges between $N(a)$ and
$N(b)$.\\
(e) For all $Y \subset B$ and $X \subset A$, there are at most $|X|+|Y|^2 \log n$ edges between $X$ and $Y$.\\
(f) For all $A' \subset A$ and $B' \subset B$, there are at most
$t=6\max(|A'||B'|p,(|A'|+|B'|)\log n)$ edges between $A'$ and $B'$.
\end{lemma}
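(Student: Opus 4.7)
The plan is to establish each of parts (a)--(f) by applying a Chernoff concentration bound to an appropriate binomial variable and then union-bounding, with the single exception of (e), which will be deduced from (b) by a double-counting argument. Since each of the six failure events will have probability $o(1)$, a final union bound delivers all six properties simultaneously almost surely.

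Property (a) is immediate: the degree of each vertex is $\mathrm{Bin}(n,p)$ with mean $pn=n^{1/2}/4\to\infty$, so Chernoff gives $\Pr[|\deg(v)-pn|>\delta pn]=\exp(-\Omega(\delta^{2}\sqrt{n}))$, and any $\delta=o(1)$ decaying slowly enough (e.g.\ $\delta=n^{-1/5}$) beats a union bound over the $2n$ vertices. For (b), a fixed pair in the same class has $\mathrm{Bin}(n,p^{2})$ common neighbors (pairs across classes have none), and the first-moment bound $\Pr[\mathrm{Bin}(n,p^{2})\ge \log n]\le \binom{n}{\log n}p^{2\log n}\le (e/(16\log n))^{\log n}=n^{-\omega(1)}$ easily beats the $O(n^{2})$ pairs. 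For (f), the number of edges between fixed $A'\subseteq A$, $B'\subseteq B$ is $\mathrm{Bin}(|A'||B'|,p)$, and the tail bound $\Pr[\mathrm{Bin}(m,p)\ge t]\le (emp/t)^{t}$ applied with $t=6\max(|A'||B'|p,(|A'|+|B'|)\log n)$ yields $(e\mu/t)^{t}\le 2^{-t}\le n^{-6(|A'|+|B'|)}$ (since $e/6<1/2$), which comfortably beats the $\binom{n}{|A'|}\binom{n}{|B'|}\le n^{|A'|+|B'|}$ choices for $(A',B')$.

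For (c) and (d), fix a pair $(a,b)\in A\times B$ and first expose the edges incident to $\{a,b\}$, which determines $N(a)\subseteq B$ and $N(b)\subseteq A$. The remaining potential edges strictly between $N(a)\setminus\{b\}$ and $N(b)\setminus\{a\}$ involve neither $a$ nor $b$ and hence, conditionally on the exposure, form an independent $\mathrm{Bin}(m,p)$ variable with $m=(1+o(1))p^{2}n^{2}$ by (a); Chernoff pins this down to $(1+o(1))p^{3}n^{2}=\Theta(\sqrt{n})$ with failure probability $e^{-\Omega(\sqrt{n})}$. In case (c), the edges between $N(a)$ and $N(b)$ that touch $\{a,b\}$ are the edge $(a,b)$ together with the $|N(a)|-1$ edges from $a$ to $N(a)\setminus\{b\}$ and the $|N(b)|-1$ edges from $b$ to $N(b)\setminus\{a\}$, totalling $|N(a)|+|N(b)|-1=(2+o(1))pn$, which added to the binomial piece yields $(1+o(1))(2pn+p^{3}n^{2})$. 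In case (d), $a\notin N(b)$ and $b\notin N(a)$, so no edge between $N(a)$ and $N(b)$ meets $\{a,b\}$ and the count equals the binomial piece alone. Either way the single-pair failure probability $e^{-\Omega(\sqrt{n})}$ easily survives the union bound over $O(n^{2})$ pairs. Finally for (e), assuming (b), write $e(X,Y)=\sum_{x\in X}|N(x)\cap Y|$ and split according to whether $|N(x)\cap Y|\le 1$: the first part contributes at most $|X|$, and for the second the bound $k\le 2\binom{k}{2}$ for $k\ge 2$ together with the double-counting identity $\sum_{x\in X}\binom{|N(x)\cap Y|}{2}=\sum_{\{y,y'\}\subseteq Y}|N(y)\cap N(y')\cap X|$ and property (b) yield a contribution of at most $2\binom{|Y|}{2}\log n\le |Y|^{2}\log n$. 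The main obstacle is the conditioning setup in (c)--(d): one must expose the neighborhood-defining edges first so that the ``deep'' binomial piece is genuinely independent of $|N(a)|$ and $|N(b)|$, and one must cleanly separate the $\Theta(pn)$ boundary contribution from the $\Theta(p^{3}n^{2})$ bulk; everything else is routine Chernoff and counting.
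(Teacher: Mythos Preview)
Your proposal is correct and follows essentially the same approach as the paper: Chernoff plus union bound for (a), (c), (d), (f); the first-moment bound $\binom{n}{\log n}p^{2\log n}$ for (b); and the double-counting via $\binom{d_Y(x)}{2}$ deduction of (e) from (b). One tiny imprecision: in (c)--(d) you claim failure probability $e^{-\Omega(\sqrt{n})}$, but to obtain a $(1+o(1))$ factor you must take a vanishing relative error $\delta=o(1)$, giving $e^{-\Omega(\delta^{2}\sqrt{n})}$; this still beats the $O(n^{2})$ union bound for, say, $\delta=n^{-1/5}$, so the argument stands.
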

\begin{proof}
\noindent (a) The degree of each of the $2n$ vertices of $G(n,n,p)$ follows a binomial distribution. Chernoff's bound for the
binomial distribution implies that almost surely all of the degrees are concentrated around their expected value, $pn$.
\vspace{0.05cm}

\noindent (b) The probability that a given pair of vertices in the same part have at least $\log n$ common neighbors is at most
${n \choose \log n} p^{2\log n}
<n^{\log n}(4\sqrt{n})^{-2\log n}=n^{-4}$. There are $2{n \choose 2} <n^2$ pairs of vertices in the same class of $G(n,n,p)$, so
the probability $G(n,n,p)$ has a pair of vertices with $\log n$ common neighbors is at most $n^{-4}n^2 = n^{-2}$.
\vspace{0.05cm}

\noindent (c) By (a), almost surely, the degree of every vertex in $G(n,n,p)$ is $(1+o(1))pn$. If $a$ and $b$ are adjacent,
given $|N(a)|$ and $|N(b)|$, the number of edges between $N(a) \setminus \{b\}$ and $N(b) \setminus \{a\}$ follows a binomial
distribution. An application of Chernoff's bound for the binomial distribution implies that a.s.~for each edge $(a,b)$ the number of
edges between $N(a)$ and $N(b)$ is $|N(a)|+|N(b)|-1+(1+o(1))p|N(a)||N(b)|=(1+o(1))(2pn+p^3n^2)$.
 \vspace{0.05cm}

\noindent (d) Similar to (c), an application of Chernoff's bound for the binomial distribution implies that almost surely for each
pair $a,b$ of non-adjacent vertices in different vertex classes, the number of edges between $N(a)$ and $N(b)$ is
$(1+o(1))p|N(a)||N(b)|=(1+o(1))p^3n^2$.  \vspace{0.05cm}

\noindent (e) Let $x_1, \ldots, x_k$ be the vertices of $X$ with at least two neighbors in $Y$.
Since every $d_Y(x_i)\geq 2$ and (by (b)) each pair of vertices in $Y$ has at most $\log n$ common neighbors, we have that
$$ \frac{1}{2}\sum_i d_Y(x_i) \leq \sum_i {d_Y(x_i) \choose 2} \leq {|Y| \choose 2}\log n.$$
This implies that the total number of edges between $X$ and $Y$ is at most
$|X|-k+\sum_i d_Y(x_i) \leq |X|+|Y|^2 \log n$.
\vspace{0.05cm}

\noindent (f) The result is trivial if $A'$ or $B'$ is empty, so we may assume they are nonempty. The number of pairs $A' \subset A$
and $B' \subset B$ of size $|A'|=a$ and $|B'|=b$ is ${n \choose a}{n \choose b}$.
For fixed $A'$ and $B'$ of sizes $a$ and $b$, respectively,
the probability that there are at least $t$ edges between them is at most $p^t{ab \choose t} \leq \left(\frac{abep}{t}\right)^t \leq
2^{-t} \leq n^{-6(a+b)}$. So the probability that there are such subsets $A'$ and $B'$
is at most $\sum_{a,b}{n \choose a}{n \choose b}n^{-6(a+b)} \leq \sum_{a,b} n^{a+b}n^{-6(a+b)} =\sum_{a,b} n^{-5(a+b)}
\leq n^2n^{-5}=n^{-3}$.
\end{proof}

Let $H$ be a subgraph of the bipartite graph $G=G(n,n,p)$ of diameter at most $3$, and $X$ and $Y$ be its vertex sets with $|X| \geq
|Y|$.
In particular, according to Proposition \ref{diam3prop}, each pair of vertices of $X$ have a common neighbor in $Y$,
and each pair of vertices of $Y$ have a common neighbor in $X$. We suppose for contradiction that $H$ has more than
$(1+o(1))(2pn+p^3n^2)$ edges.
Adding extra edges to a graph on a given vertex set cannot increase the diameter of the graph, so we may suppose that $H$ is the
induced subgraph of $G$ with vertex sets $X$ and
$Y$. In graph $H$, let $d_1 \geq d_2 \geq \ldots \geq d_{|Y|}$ be the degrees of the vertices of $Y$ in decreasing order, and $v_i$
denote the vertex of degree $d_i$.

We first prove that $H$ has few vertices.

\begin{claim}\label{claim1}
Almost surely, every diameter $3$ subgraph $H$ of $G(n,n,p)$ has at most $10n^{1/2}\log n$ vertices.
\end{claim}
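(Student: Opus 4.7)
The plan is to bound $|V(H)|$ by deriving a lower bound on $m := |E(H)|$ from the diameter-$3$ hypothesis combined with the tight degree bound (property~(a) of Lemma~\ref{lem1}), and then clashing it with the sparse edge bound from property~(f). Write the two parts of $H$ as $X$ and $Y$ with $a := |X| \geq |Y| =: b$ (WLOG). Suppose for contradiction that $a + b > 10\, n^{1/2}\log n$; in particular $a > 5\, n^{1/2}\log n$ is large.

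For the lower bound on $m$, I will apply Proposition~\ref{diam3prop}: each pair of vertices in $X$ has a common neighbor in $Y$, so $\sum_{v \in Y} \binom{d_H(v)}{2} \geq \binom{a}{2}$, which rearranges to $\sum_{v \in Y} d_H(v)^2 \geq a(a-1) + m$. On the other hand, property~(a) gives $d_H(v) \leq d_G(v) \leq (1+o(1))pn = (1+o(1))n^{1/2}/4$ for every $v$, so $\sum_{v \in Y} d_H(v)^2 \leq (1+o(1))(n^{1/2}/4)\,m$. Comparing these two inequalities and solving for $m$ (using that $a$ is large) will yield the pivotal bound $m \geq (4 - o(1))\,a^2/n^{1/2}$.

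For the upper bound, property~(f) gives $m \leq 6\max(abp, (a+b)\log n)$; since $a \geq b$, the second term is at most $12a\log n$, so there are two cases. In Case~A ($m \leq 12a\log n$), combining with the lower bound yields $(4-o(1))a^2/n^{1/2} \leq 12a\log n$, hence $a \leq (3+o(1))\,n^{1/2}\log n$, and so $a + b \leq 2a \leq (6+o(1))\,n^{1/2}\log n < 10\, n^{1/2}\log n$ for $n$ large, a contradiction. In Case~B ($m \leq 6abp = 3ab/(2n^{1/2})$), combining with the lower bound yields $(4-o(1))a \leq 3b/2$, so $b \geq (8/3 - o(1))\,a > a$, contradicting $a \geq b$.

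The main obstacle will be securing the $m = \Omega(a^2/n^{1/2})$ lower bound, since the naive convexity approach (Cauchy--Schwarz $\sum d_H(v)^2 \geq m^2/b$) yields only $m \geq a\sqrt{b/2}$, which is too weak when $b$ is much smaller than $a$. The key insight is to use the \emph{upper} bound on individual degrees (property~(a)) to bound $\sum d^2 \leq D\,m$ with $D = (1+o(1))pn$; the linear-in-$m$ terms on both sides of $\sum d^2 \geq a(a-1)+m$ and $\sum d^2 \leq D\,m$ effectively cancel, leaving $m \gtrsim a(a-1)/(D-1) \sim 4\,a^2/n^{1/2}$. This quadratic-in-$a$ lower bound is exactly what is needed to cap $a$ at $O(n^{1/2}\log n)$ in Case~A and to reduce Case~B to an immediate size contradiction.
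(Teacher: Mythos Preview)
Your proof is correct and is essentially the same argument as the paper's: both combine the common-neighbor inequality $\sum_{v\in Y}\binom{d_H(v)}{2}\geq\binom{|X|}{2}$, the maximum-degree bound from Lemma~\ref{lem1}(a), and the edge bound from Lemma~\ref{lem1}(f) to reach a contradiction. The only cosmetic difference is that the paper keeps the comparison at the level of $\sum\binom{d_i}{2}$ versus $\binom{|X|}{2}$ (using the convexity bound $\sum\binom{d_i}{2}\leq \frac{t}{\Delta}\binom{\Delta}{2}$, which is exactly your $\sum d^2\leq Dm$), whereas you first extract an explicit lower bound $m\geq (4-o(1))a^2/n^{1/2}$ and then split into two cases according to which term in Lemma~\ref{lem1}(f) dominates.
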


\begin{proof} Suppose for contradiction that $H$ has at least $x=10n^{1/2}\log n$ vertices. Since $|X| \geq |Y|$, we have $|X| \geq
x/2 =5n^{1/2}\log n$. Lemma \ref{lem1}(f) shows that almost surely there are at most $t=6\max(|X||Y|p,(|X|+|Y|)\log n) \leq
6|X|\max\left(p|X|,2\log n\right)$ edges between $X$ and $Y$. Also, by Lemma \ref{lem1}(a), a.s.~the maximum degree satisfies
$\Delta = (1+o(1))np$.
Convexity of the function $f(y)={y \choose 2}$ yields $$\sum_{i=1}^{|Y|} {d_i \choose 2} \leq \frac{t}{\Delta}{\Delta \choose 2} =
(1+o(1))npt/2 \leq 4np|X|\max\left(p|X|,2\log n\right),$$ which is an upper bound on the number of pairs of vertices of $X$ that
have
a common neighbor in $Y$. We have $4np|X| \cdot p|X| <{|X| \choose 2}$ and $4np|X|\cdot 2\log n <{|X| \choose 2}$. Hence,
there are less than ${|X| \choose 2}$ pairs of vertices in $X$ with a common neighbor in $Y$. So there is a pair of
vertices in $X$ with no common neighbor in $Y$, contradicting $H$ has diameter at most $3$ and completing the proof. \end{proof}

Lemma \ref{lem1}(f) together with the previous claim imply that a.s.~any subgraph of $G(n,n,p)$ of diameter $3$ with at least
$2pn=\sqrt{n}/2$ edges has at least $\frac{1}{12}\frac{\sqrt{n}}{\log n}$ vertices and at most $10\sqrt{n}\log n$ vertices.

\begin{claim}\label{claim2}
Let $\epsilon=|X|^{-1/5}$. There is a vertex in $Y$ that has at least $(1-\epsilon)|X|$ neighbors in $X$.
\end{claim}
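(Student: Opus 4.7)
The plan is a double-counting argument combining the common-neighbor characterization of bipartite diameter-$3$ graphs (Proposition~\ref{diam3prop}) with the random-graph tools from Lemma~\ref{lem1} and the vertex bound from Claim~\ref{claim1}. For $y \in Y$ I write $d_y = |N_H(y) \cap X|$ and $D = e(H) = \sum_y d_y$. Since $H$ is bipartite of diameter at most $3$, Proposition~\ref{diam3prop} guarantees that every pair of vertices in $X$ has a common neighbor in $Y$, so
\[
\sum_{y \in Y}\binom{d_y}{2} \;\geq\; \binom{|X|}{2},
\]
or equivalently $\sum_{y \in Y} d_y^2 \geq |X|(|X|-1) + D$.

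Now suppose for contradiction every $y \in Y$ satisfies $d_y \leq (1-\epsilon)|X|$ with $\epsilon = |X|^{-1/5}$. The standard convexity bound $\sum d_y^2 \leq (\max_y d_y)\, D \leq (1-\epsilon)|X|\cdot D$, combined with the preceding display, forces
\[
D \;\geq\; \frac{|X|(|X|-1)}{(1-\epsilon)|X|-1} \;=\; (1+\Theta(\epsilon))\,|X|.
\]
Thus any upper bound on $D$ of the shape $|X| + o(|X|^{4/5})$ gives a contradiction. Lemma~\ref{lem1}(e) yields exactly $D \leq |X| + |Y|^2 \log n$, which closes the argument in the regime where $|Y|^2\log n = o(|X|^{4/5})$.

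For the complementary regime, where $|Y|$ is comparable to or larger than $|X|^{2/5}/\sqrt{\log n}$, I would iterate: letting $v_1$ realize $a = \max_y d_y$, the pairs of $X$ not already covered by $N_H(v_1)$ still need common neighbors in $Y\setminus\{v_1\}$, giving $\sum_{y\neq v_1}\binom{d_y}{2} \geq \binom{|X|}{2}-\binom{a}{2}$. Reapplying the max-degree argument to this residual system, together with the at-most-$\log n$ per-pair common-neighbor bound of Lemma~\ref{lem1}(b) and the finer edge bounds in Lemma~\ref{lem1}(f), bootstraps $a$ upward toward $|X|$. The main obstacle is precisely this moderate-to-large $|Y|$ regime: using only the bipartite diameter-$3$ structure one can only derive $a \geq \sqrt{|X|(|X|-1)/|Y|}$, which is far from $(1-\epsilon)|X|$, so the proof must essentially combine the random-graph features of $G(n,n,p)$ -- the bounded common-neighbor count (Lemma~\ref{lem1}(b)) and the near-linear bound on $e(H)$ from Lemma~\ref{lem1}(e,f) -- in a careful case analysis that rules out bipartite diameter-$3$ subgraphs with evenly spread $Y$-degrees.
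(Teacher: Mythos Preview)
Your opening matches the paper: the double counting $\sum_{y}\binom{d_y}{2}\ge\binom{|X|}{2}$ is exactly the right starting inequality. But the argument via $\sum d_y^2\le(1-\epsilon)|X|\,D$ together with Lemma~\ref{lem1}(e) only closes when $|Y|^2\log n=o(|X|^{4/5})$, as you yourself note. The difficulty is that this small-$|Y|$ case is not the main one: in the setting of Lemma~\ref{Gnnp} one ultimately needs (and establishes) that $|Y|$ and $|X|$ are both of order $n^{1/2+o(1)}$, so the claim must be proved for $|Y|$ as large as $|X|$. Your iteration sketch does not supply a working mechanism here; the bound $\sum d_y^2\le(\max_y d_y)\,D$ is simply too lossy when many $d_y$ are moderately large, and peeling off one top vertex at a time does not repair this.

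The idea you are missing is to split the ordered degree sequence at rank $r=|X|^{1/3}$ rather than bounding all of $\sum\binom{d_y}{2}$ at once. Apply Lemma~\ref{lem1}(e) with $Y$ replaced by the set of the top $r$ vertices: then $\sum_{i\le r}d_i\le|X|+r^2\log n$, and under the contradiction hypothesis $d_i\le(1-\epsilon)|X|$ convexity gives
\[
\sum_{i\le r}\binom{d_i}{2}\;\le\;\binom{(1-\epsilon)|X|}{2}+\binom{\epsilon|X|+r^2\log n}{2}\;\le\;\binom{|X|}{2}-\tfrac12\epsilon|X|^2.
\]
For the tail $i>r$, Lemma~\ref{lem1}(f) combined with Claim~\ref{claim1} gives $e(H)\le 15|X|\log n$, hence $d_r\le 15|X|(\log n)/r$, and then $\sum_{i>r}\binom{d_i}{2}\le r\binom{15|X|(\log n)/r}{2}=O\big(|X|^{5/3}\log^2 n\big)\ll\tfrac12\epsilon|X|^2=\tfrac12|X|^{9/5}$. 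Adding the two pieces contradicts $\sum_i\binom{d_i}{2}\ge\binom{|X|}{2}$. The point is that Lemma~\ref{lem1}(e) is far sharper when applied to a \emph{small} subset of $Y$; the choice $r=|X|^{1/3}$ balances the $r^2\log n$ error term in the head against the tail contribution.
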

\begin{proof}
Since every pair of vertices of $X$ have a common neighbor in $Y$ we have $\sum_{i=1}^{|Y|} {d_i \choose 2} \geq {|X| \choose 2}$,
where $d_1 \geq d_2\geq \ldots \geq d_{|Y|}$ are the degrees of vertices
of $Y$ in $X$. Suppose for contradiction that no vertex of $Y$ that has at least $(1-\epsilon)|X|$ neighbors in $X$.
Let $r=|X|^{1/3}$. By Lemma \ref{lem1}(e),
$\sum_{i=1}^r d_i \leq |X|+r^2\log n$. In particular, convexity of the function $f(x)={x \choose 2}$ demonstrates that
$$\sum_{i=1}^r {d_i \choose 2}  \leq  {(1-\epsilon)|X| \choose 2}+{\epsilon|X|+r^2 \log n \choose 2} \leq
{(1-\epsilon)|X| \choose 2}+\epsilon^2|X|^2  \leq  {|X| \choose 2}-\epsilon |X|^2/2.$$

Since $H$ and hence also $X$ has at most $10\sqrt{n}\log n$ vertices, we have that $|X|^2p \leq \frac{5}{2}|X|\log n$.
Thus, by Lemma \ref{lem1}(f), there are at most $15 |X|\log n$ edges from $Y$ to $X$.
This implies that $d_i \leq d_r \leq 15 |X|\log n/r$ for all $i >r$.
Under these constraints,  we have by convexity of the function $f(x)={x \choose 2}$ that
$$\sum_{i>r} {d_i \choose 2} \leq r {15 |X|\log n/r \choose 2} <
\frac{120}{r} |X|^{2}\log^2 n=120 |X|^{5/3}\log^2 n \ll \frac{1}{2}|X|^{9/5}=\frac{1}{2}\epsilon |X|^2.$$
This together with the above estimate shows that $\sum_{i=1}^{|Y|} {d_i \choose 2} < {|X| \choose 2}$, a contradiction.
\end{proof}

Take $\epsilon$ as in Claim \ref{claim2}. Since there is a vertex in $Y$ with at least
$(1-\epsilon)|X|$ neighbors in $X$ and every vertex has degree $(1+o(1))pn$, then $|X| \leq (1+o(1))pn/(1-\epsilon) = (1+o(1))pn$.

We next show that $Y$ is also quite large if $H$ has at least $2pn$ edges. By Claim \ref{claim2}, there is a vertex $v \in Y$
adjacent to at least $(1-\epsilon)|X|$ elements of $X$. By Lemma \ref{lem1}(b), every other vertex in $Y$ besides $v$ has at most
$\log n$ neighbors in $N(v)$. Hence, there are at most $|X|+|Y|\log n$
edges between $N(v)$ and $Y$. There are at most $\epsilon|X|$ vertices in $X \setminus N(v)$, so there are at most
$6\max(\epsilon|X||Y|p,(\epsilon|X|+|Y|)\log n)$
edges between $X \setminus N(v)$ and $Y$. Hence the number of edges of $H$ is at most
$$|X|+|Y|\log n+6\max(\epsilon|X||Y|p,(\epsilon|X|+|Y|)\log n) \leq (1+o(1))pn+7|Y|\log n,$$
where we use $|X|\leq (1+o(1))pn$. If $|Y| \leq \frac{1}{30}\frac{\sqrt{n}}{\log n}$, we get that there are less than $2pn$
edges in $H$, a contradiction. To summarize, we have the following inequalities:
$\frac{\sqrt{n}}{30\log n} \leq |Y| \leq |X| \leq (1+o(1))pn$.

Now that we have established that $X$ and $Y$ are of similar size, the proof of Claim \ref{claim2} with $X$ and $Y$ switched also
gives us the following claim. The only estimate in the proof that needs to be checked is that $|X|^2(\log n)^2/r \ll \epsilon
|Y|^2$, and since $|X|$ and $|Y|$ are both of the form $n^{1/2+o(1)}$, $r=|Y|^{1/3}$ and $\epsilon=|Y|^{-1/5}$, this clearly holds.

\begin{claim}\label{claim3}
Let $\epsilon=|Y|^{-1/5}$. There is a vertex in $X$ that has at least $(1-\epsilon)|Y|$ neighbors in $Y$.
\end{claim}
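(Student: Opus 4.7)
The plan is to transpose the argument of Claim \ref{claim2}, swapping the roles of $X$ and $Y$ throughout. By Proposition \ref{diam3prop}, each pair of vertices in $Y$ has a common neighbor in $X$, so if $d'_1\ge d'_2\ge\cdots\ge d'_{|X|}$ denote the degrees of the vertices of $X$ into $Y$ in decreasing order, then $\sum_{i=1}^{|X|}\binom{d'_i}{2}\ge \binom{|Y|}{2}$. Supposing for contradiction that every vertex of $X$ has fewer than $(1-\epsilon)|Y|$ neighbors in $Y$ with $\epsilon=|Y|^{-1/5}$, I would set $r=|Y|^{1/3}$ and apply the symmetric form of Lemma \ref{lem1}(e) — its proof is symmetric in the two sides, since property (b) applies to pairs on either side — to the top $r$ vertices of $X$, obtaining $\sum_{i=1}^{r}d'_i\le |Y|+r^2\log n$. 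Convexity of $\binom{\cdot}{2}$ then gives, exactly as in the original argument,
\[
\sum_{i=1}^{r}\binom{d'_i}{2}\;\le\;\binom{(1-\epsilon)|Y|}{2}+\binom{\epsilon|Y|+r^2\log n}{2}\;\le\;\binom{|Y|}{2}-\tfrac{1}{2}\epsilon|Y|^2.
\]

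For the tail $i>r$, I would invoke Lemma \ref{lem1}(f). The discussion preceding Claim \ref{claim2} has already pinned down $\frac{\sqrt{n}}{30\log n}\le|Y|\le|X|\le(1+o(1))pn$, so both $|X|$ and $|Y|$ are of the form $n^{1/2+o(1)}$, and in particular $|X||Y|p=O(|X|\log n)$. Hence the total number of edges between $X$ and $Y$ is $O(|X|\log n)$, so $d'_r=O(|X|\log n/r)$ and by convexity
\[
\sum_{i>r}\binom{d'_i}{2}\;\le\;r\binom{O(|X|\log n/r)}{2}\;=\;O\!\left(\frac{|X|^{2}\log^{2} n}{r}\right).
\]

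The one genuinely new point to verify, and so the main (very modest) obstacle, is that this tail is negligible compared to $\tfrac{1}{2}\epsilon|Y|^{2}$, i.e.\ $|X|^{2}\log^{2} n/r\ll\epsilon|Y|^{2}$. Substituting $r=|Y|^{1/3}$ and $\epsilon=|Y|^{-1/5}$, this reduces to $|X|^{2}\log^{2} n\ll|Y|^{32/15}$, which holds for $n$ sufficiently large since $|X|^{2}=n^{1+o(1)}$ and $|Y|^{32/15}=n^{16/15+o(1)}$ and $16/15>1$. Adding the top-$r$ estimate to the tail estimate then yields $\sum_{i=1}^{|X|}\binom{d'_i}{2}<\binom{|Y|}{2}$, contradicting the lower bound from Proposition \ref{diam3prop} and establishing the claim.
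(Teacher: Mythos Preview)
Your proposal is correct and follows exactly the paper's approach: swap the roles of $X$ and $Y$ in the proof of Claim~\ref{claim2} and verify the one new estimate $|X|^{2}(\log n)^{2}/r \ll \epsilon|Y|^{2}$, which you do using $|X|,|Y|=n^{1/2+o(1)}$. One minor slip: the size bounds $\tfrac{\sqrt{n}}{30\log n}\le|Y|\le|X|\le(1+o(1))pn$ are established in the discussion \emph{after} Claim~\ref{claim2}, not before it, but this does not affect the argument since they are available before Claim~\ref{claim3}.
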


We now complete the proof of Lemma \ref{Gnnp}. Let $y$ be a vertex in $Y$ with at least $(1-\epsilon)|X|$ neighbors in $X$ and $x$
be a
vertex in $X$ with at least $(1-\epsilon)|Y|$ neighbors in $Y$, where $\epsilon=|Y|^{-1/5}$. Such vertices $x$ and $y$ exist
by Claims
\ref{claim2} and \ref{claim3}, since $|Y| \leq |X|$. Let $X_1$ be the set of neighbors of $y$ in $X$, and $X_2=X \setminus X_1$. Let
$Y_1$ denote the set of neighbors of $x$ in $Y$, and $Y_2=Y \setminus Y_1$. By Lemma \ref{lem1}(c) if $x$ and $y$ are adjacent and Lemma
\ref{lem1}(a) and (d) if $x$ and $y$ are not adjacent,  there are at most $(1+o(1))(2pn+p^3n^2)$ edges between
$X_1 \cup \{x\}$ and $Y_1 \cup \{y\}$.  Since $X_1$ consists of neighbors of $y$, by Lemma \ref{lem1}(b), each vertex in $Y_2
\setminus \{y\}$ has at most $\log n$ neighbors in $X_1$. Similarly, each vertex in $X_2 \setminus \{x\}$ has at most $\log n$
neighbors in $Y_1$. Lemma \ref{lem1}(f) implies that the number of edges between $X_2$ and $Y_2$ is at most
$$6\max(|X_2||Y_2|p,(|X_2|+|Y_2|)\log n) \leq 6\max(\epsilon|X|\epsilon|Y|p,(\epsilon|X|+\epsilon|Y|)\log n) < 20\epsilon|X|\log n
=o(n^{1/2}).$$
Putting these inequalities altogether, the number of edges between $X$ and $Y$, and hence the number of edges of $H$, is at most
$$(1+o(1))(2pn+p^3n^2)+|X_2|\log n+|Y_2|\log n + o(n^{1/2}) =(1+o(1))(2pn+p^3n^2),$$
where we use $|X_2| \leq \epsilon|X|=o(n^{1/2})$ and $|Y_2| \leq \epsilon|Y|=o(n^{1/2})$, which completes the proof. \qed

\subsection{Decomposing a graph into diameter 4 subgraphs}

In this subsection, we prove the last claim of Theorem \ref{main1}, that $P(n,\epsilon,4)=\Theta(1/\epsilon)$ for
$\epsilon \geq 1/n$.

\label{sectdiam4}
\begin{definition}
For a vertex $v$ and graph $G$, let $N_r(v)$ be those vertices of $G$ which are within distance at most $r$ from vertex $v$ and
$G_r(v)$ denote the induced subgraph of $G$ with vertex set $N_r(v)$.
\end{definition}

The graph $G_r(v)$ of course has radius at most $r$ and hence diameter at most $2r$. Note that $G_r(v)=G_{2r}(v,v)$ (defined in the
previous section) as any vertex at distance at most $r$ from $v$ is contained in a walk from $v$ to $v$ of length at most $2r$, and
any walk from $v$ to $v$ of length at most $2r$ contains only vertices of distance at most $r$ from $v$.

To bound $P(n,\epsilon,4)$ from above we use the following lemma, which
 is tight apart from the constant factor as demonstrated by a disjoint union of cliques of equal
size (see the proof of Lemma \ref{last} below).

\begin{lemma}\label{secondlemmafor4}
If a graph $G$ has $n$ vertices and at least $m \geq 4n$ edges, then it has a subgraph with diameter at most $4$ and at least
$\frac{m^2}{8n^2}$ edges.
\end{lemma}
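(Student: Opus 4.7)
The plan is to find a single vertex $v$ whose neighborhood-of-radius-$2$ subgraph $G_2(v)$ contains at least $m^2/(8n^2)$ edges; such a subgraph automatically has diameter at most $4$, as noted in the paragraph following the definition of $G_r(v)$ (every vertex of $N_2(v)$ reaches $v$ in at most $2$ steps inside $G_2(v)$, so any two vertices of $G_2(v)$ are joined through $v$ by a path of length $\leq 4$). I would locate $v$ by averaging.

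First I would lower-bound $f(v) := |E(G_2(v))|$ by the number of edges of $G$ with at least one endpoint in $N(v)$. This is legitimate because if $(u,w) \in E$ and $u \in N(v)$, then $u \in N_1(v)$ and $w$, being adjacent to $u$, lies in $N_2(v)$; both endpoints are therefore in the vertex set of $G_2(v)$. A standard count gives
\[
 \#\{\text{edges of } G \text{ meeting } N(v)\} \;=\; \sum_{u \in N(v)} d(u) - e(N(v)),
\]
where $e(N(v))$ is the number of edges inside $N(v)$. Since $2e(N(v)) \leq \sum_{u \in N(v)} d(u)$ (handshake inside $N(v)$), this is at least $\tfrac{1}{2}\sum_{u \in N(v)} d(u)$.

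Next I would sum over $v$ and swap the order of summation. Each vertex $u$ contributes $d(u)$ to the inner sum for each $v \in N(u)$, and there are exactly $d(u)$ such $v$, so
\[
 \sum_{v} f(v) \;\geq\; \tfrac{1}{2}\sum_{u} d(u)^{2} \;\geq\; \tfrac{1}{2} \cdot \tfrac{(2m)^{2}}{n} \;=\; \tfrac{2m^{2}}{n},
\]
where the middle inequality is Cauchy--Schwarz applied to the degree sequence. By pigeonhole, some vertex $v$ achieves $f(v) \geq 2m^{2}/n^{2}$, which comfortably exceeds the required $m^{2}/(8n^{2})$ and yields the desired diameter-$4$ subgraph.

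I do not anticipate a real obstacle: the argument rests entirely on the single observation that edges touching $N(v)$ are automatically preserved in $G_2(v)$, and then on Cauchy--Schwarz plus averaging. The hypothesis $m \geq 4n$ is not strictly needed for the computation above; it appears to serve only to keep the conclusion nontrivial and to mirror the analogous hypothesis $m \geq 12n$ used in the diameter-$3$ lemma.
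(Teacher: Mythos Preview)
Your proof is correct, but it differs from the paper's. The paper proceeds by first greedily deleting vertices of degree at most $m/(2n)$ to reach an induced subgraph $G'$ with at least $m/2$ edges and minimum degree exceeding $m/(2n)$; then for \emph{any} vertex $v$ of $G'$ the ball $G'_2(v)$ already contains at least $\tfrac12(m/(2n))^2 = m^2/(8n^2)$ edges. Your argument bypasses the minimum-degree cleanup entirely: you average the quantity $f(v)=|E(G_2(v))|$ directly over all vertices of $G$, lower-bound it by $\tfrac12\sum_{u\in N(v)} d(u)$, swap the sums, and invoke Cauchy--Schwarz on the degree sequence to get $\sum_v f(v)\ge 2m^2/n$. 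This yields a vertex with $f(v)\ge 2m^2/n^2$, a factor of $16$ stronger than the paper's bound, and as you observe the hypothesis $m\ge 4n$ plays no role in the calculation itself. The paper's route has the minor conceptual advantage that once one has passed to $G'$, \emph{every} ball of radius $2$ works (not just an average one), which is occasionally useful; your route is shorter and gives the sharper constant.
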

\begin{proof}
Delete vertices one by one of degree at most $\frac{m}{2n}$. The resulting induced subgraph $G'$ has at least $m-n\frac{m}{2n}=m/2$
edges and minimum degree at least $\frac{m}{2n}$. For any vertex $v$ of $G'$, $G'_2(v)$ has diameter at most $4$ and at least
$(\frac{m}{2n})^2/2 \geq \frac{m^2}{8n^2}$ edges as $v$ and its neighbors have degree at least $\frac{m}{2n}$.
\end{proof}

We now prove a quantitative version of the upper bound on $P(n,\epsilon,4)$ in Theorem \ref{main1}(c).

\begin{theorem}\label{thmfor4}
Every graph $G$ on $n$ vertices can be edge partitioned $E=E_0 \cup E_1 \cup \ldots \cup E_{\ell}$ such that $|E_0| \leq \epsilon
n^2$, $\ell \leq 16\epsilon^{-1}$, and for $1 \leq i \leq \ell$, the diameter of $E_i$ is at most $4$.
\end{theorem}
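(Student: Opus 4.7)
The plan is to mimic the proof of Theorem \ref{thmfor3}, using Lemma \ref{secondlemmafor4} as the extraction tool and summing a geometric series over dyadic levels of the edge count.

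Specifically, I will construct $E_1, E_2, \ldots$ greedily as follows. At each step, let $G'$ be the subgraph of $G$ spanned by the edges not yet placed in any $E_i$. As long as $G'$ has at least $\epsilon n^2$ edges (so in particular at least $\epsilon n^2 \geq n \geq 4n$ once $\epsilon$ is not tiny; the case $\epsilon \geq 1/n$ means this hypothesis is satisfied in the regime of interest), apply Lemma \ref{secondlemmafor4} to $G'$: if $G'$ has $m$ edges, this yields a subgraph $H$ of diameter at most $4$ with at least $\tfrac{m^2}{8n^2}$ edges, which I take as the next $E_i$. When the remaining edge set has at most $\epsilon n^2$ edges, I declare it to be $E_0$.

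To bound the number of parts, I use a dyadic accounting. Suppose the current subgraph has $m$ edges with $2^{i-1}\epsilon n^2 < m \leq 2^i \epsilon n^2$. Each pulled-out subgraph removes at least $\tfrac{(m/2)^2}{8n^2} \geq \tfrac{(2^{i-1}\epsilon n^2)^2}{8n^2} = \tfrac{2^{2i-2}\epsilon^2 n^2}{8}$ edges, so the number of extractions needed to shrink the edge count from at most $2^i\epsilon n^2$ down to at most $2^{i-1}\epsilon n^2$ is at most
\[
\frac{2^{i-1}\epsilon n^2}{2^{2i-2}\epsilon^2 n^2/8} \;=\; \frac{2^{i+2}}{2^{2i-2}\epsilon} \;=\; \frac{16}{2^i\epsilon}.
\]
Since the starting graph has at most $\binom{n}{2}<n^2/2$ edges, I only need to consider $i$ with $2^i\epsilon \leq 1$, and summing the bound over $i\geq 1$ yields
\[
\sum_{i\geq 1}\frac{16}{2^i\epsilon} \;\leq\; \frac{16}{\epsilon},
\]
which is the desired estimate $\ell \leq 16\epsilon^{-1}$.

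The only mild obstacle is ensuring that Lemma \ref{secondlemmafor4} is applicable throughout the process, i.e.\ that the residual graph really has at least $4n$ edges whenever we still want to extract. This is automatic when $\epsilon n^2 \geq 4n$, i.e.\ $\epsilon \geq 4/n$; for smaller $\epsilon$ in the allowed range $\epsilon \geq 1/n$ one can simply absorb the residual $O(n)$ edges into $E_0$ (adjusting the constant in $|E_0|\leq \epsilon n^2$ by at most a factor that is swallowed by the $O(1/\epsilon)$ bound). This gives the stated partition with the desired diameter, residual, and number-of-parts bounds.
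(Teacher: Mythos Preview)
Your argument is essentially identical to the paper's: repeatedly extract a diameter-$4$ subgraph via Lemma~\ref{secondlemmafor4}, and bound the number of extractions by a geometric series over dyadic edge-count levels, obtaining $\sum_{i\geq 1}2^{4-i}\epsilon^{-1}=16\epsilon^{-1}$. One small slip: when the current graph has $m$ edges, Lemma~\ref{secondlemmafor4} gives a subgraph with at least $m^2/(8n^2)$ edges, not $(m/2)^2/(8n^2)$; with this correction your displayed lower bound $(2^{i-1}\epsilon n^2)^2/(8n^2)$ and the subsequent arithmetic are exactly right.
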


\begin{proof} We repeatedly use Lemma \ref{secondlemmafor4} to pull out subgraphs of diameter at most $4$ until the remaining
subgraph has at most $\epsilon n^2$ edges. The remaining at most $\epsilon n^2$ edges make up $E_0$. If the current graph has at
least $m/2$ edges, then by the above lemma we can pull out a subgraph of diameter at most $4$ with at least
$(m/2)^2/8n^2$ edges. Therefore, after pulling out
$s= (m-m/2)/\frac{(m/2)^2}{8n^2}=16n^2/m$ subgraphs of diameter at
most $4$ from our graph, we remain with at most $m/2$ edges.
Similarly, applying this process to a graph with at most $2^i \epsilon n^2$ edges we get a subgraph
with at most $2^{i-1}\epsilon n^2$ edges, after we pull out
at most $\frac{16n^2}{2^i\epsilon n^2}=2^{4-i}\epsilon^{-1}$
subgraphs of diameter $4$.
Summing over all $i \geq 1$, the total number $\ell$ of subgraphs of diameter at most $4$ we pull out is at
most $\sum_{i=1}^{\infty} 2^{4-i}\epsilon^{-1}=16\epsilon^{-1}.$ \end{proof}

The next lemma shows that Theorem \ref{thmfor4} is tight apart from a constant factor.

\begin{lemma} \label{last}
If $\frac{1}{4n}\leq \epsilon \leq \frac{1}{16}$, then $P(n,\epsilon,d) \geq \frac{1}{16\epsilon}$.
\end{lemma}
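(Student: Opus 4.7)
The remark preceding this statement already points at the right construction: a vertex-disjoint union of equal-size cliques. Concretely, I would take $G$ to be the union of $t$ cliques of size $s$, where $s = 8\epsilon n$ and $t = n/s = 1/(8\epsilon)$ (small rounding is absorbed into a few isolated leftover vertices and does not affect the argument). The assumption $\epsilon \geq \frac{1}{4n}$ guarantees $s \geq 2$, and the assumption $\epsilon \leq \frac{1}{16}$ guarantees $t \geq 2$, so this construction is legitimate on $n$ vertices.

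The structural observation that drives the proof is that $G$ is disconnected, with connected components exactly those $t$ cliques (plus isolated points). Any subgraph of $G$ of finite diameter must therefore lie inside a single clique, so for any edge partition $E(G) = E_0 \cup E_1 \cup \ldots \cup E_\ell$ of the type appearing in the definition of $P(n,\epsilon,d)$, every $E_i$ with $i \geq 1$ satisfies $|E_i| \leq \binom{s}{2}$. Combined with $|E(G)| = t\binom{s}{2}$ and $|E_0| \leq \epsilon n^2$, a pigeonhole argument yields
$$\ell \;\geq\; \frac{t\binom{s}{2} - \epsilon n^2}{\binom{s}{2}} \;=\; t - \frac{\epsilon n^2}{\binom{s}{2}}.$$

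The remaining step is arithmetic with the chosen $s$. Since $s \geq 2$ we have $\binom{s}{2} \geq s^2/4$; plugging in $s = 8\epsilon n$ gives $\epsilon n^2/\binom{s}{2} \leq 4\epsilon n^2/s^2 = 1/(16\epsilon)$, while $t = 1/(8\epsilon)$, so together $\ell \geq 1/(8\epsilon) - 1/(16\epsilon) = 1/(16\epsilon)$, as required. There is no genuine obstacle here — the whole argument is a single pigeonhole after the right choice of clique size — and the extremal case $\epsilon = 1/(4n)$ (where $s = 2$, $t = n/2$, and each $E_i$ is a single edge) matches the claimed constant exactly, showing the constant $1/16$ cannot be improved by this construction.
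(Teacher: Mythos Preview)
Your proof is correct and is essentially the same as the paper's: both take $G$ to be a disjoint union of $t=1/(8\epsilon)$ cliques of size $8\epsilon n$, observe that any finite-diameter subgraph lies in a single clique, and apply the same pigeonhole to obtain $\ell \ge t/2 = 1/(16\epsilon)$. The only cosmetic difference is that the paper verifies $\epsilon n^2 \le \tfrac{t}{2}\binom{n/t}{2}$ directly, whereas you route through the inequality $\binom{s}{2}\ge s^2/4$.
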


\begin{proof} Let $t=\frac{1}{8\epsilon}$ (note that $t \leq n/2$) and let $G$ be a graph consisting of $n$ vertices partitioned
into
$t$
disjoint cliques each
of size $n/t$. The graph $G$ has $t{n/t \choose 2}$ edges. Any connected subgraph of $G$ has
${n/t \choose 2}$ edges. Hence, any partition $E=E_0 \cup E_1 \cup \ldots \cup E_{\ell}$ such that $|E_0| \leq \epsilon n^2
 \leq \frac{t}{2}{n/t \choose 2}$ and each $E_i$ is
connected for $1 \leq i \leq \ell$ must have $\ell \geq t/2 = \frac{1}{16\epsilon}$. \end{proof}

\section{Covering graphs of large minimum degree}\label{largemindegreesection}

In this section, we prove results on the minimum number of subgraphs needed to cover {\em all the edges} of a graph of large minimum
degree by low diameter
subgraphs. First we show, using a simple sampling argument, that the edges of every graph with $n$ vertices and minimum degree
linear in $n$ can
be covered by $O(\log n)$ subgraphs of diameter at most $3$.

\begin{theorem}\label{lemsampling}
Let $G=(V,E)$ be a graph on $n$ vertices with minimum degree at least $\epsilon n$. Then there is a covering $E=E_1 \cup \ldots \cup
E_{\ell}$ of the edge set of $G$ such that $\ell = 2\epsilon^{-2}\log n$ and for $1 \leq i \leq \ell$, $E_i$ has diameter at
most
$3$.

\end{theorem}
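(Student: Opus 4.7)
The plan is to use a short probabilistic sampling argument that naturally invokes the diameter-$3$ subgraph construction $G_3(\cdot,\cdot)$ introduced in Definition~\ref{defdiam}. Specifically, I would sample $\ell = 2\epsilon^{-2}\log n$ independent ordered pairs $(a_i, b_i) \in V \times V$ uniformly at random, and for each sample let $E_i := E(G_3(a_i,b_i))$ when $a_i$ and $b_i$ are at distance at most $3$ in $G$ (and $E_i := \emptyset$ otherwise). By Lemma~\ref{diamatmostd}, every $E_i$ automatically has diameter at most $3$, so the $E_i$ are legitimate candidate covering sets.

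The heart of the argument is to show that every fixed edge $(u,w) \in E$ lies in a given $E_i$ with probability at least $\epsilon^2$. The key observation is that whenever $a_i \in N[u]$ and $b_i \in N[w]$, the vertex sequence $a_i, u, w, b_i$ is a walk in $G$ from $a_i$ to $b_i$ of length at most $3$ (collapsing to length $1$ or $2$ if $a_i = u$ or $b_i = w$); in particular both $u$ and $w$ lie in the vertex set of $G_3(a_i,b_i)$, so the induced edge $(u,w)$ belongs to $E_i$. Because $a_i$ and $b_i$ are sampled independently and the minimum-degree hypothesis yields $|N[u]|, |N[w]| \geq \epsilon n$, this favourable event alone already satisfies
\[
\Pr\bigl[(u,w) \in E_i\bigr] \;\geq\; \Pr\bigl[a_i \in N[u] \text{ and } b_i \in N[w]\bigr] \;=\; \frac{|N[u]|}{n}\cdot\frac{|N[w]|}{n} \;\geq\; \epsilon^2.
\]

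A standard first-moment calculation then closes the argument: the probability that a fixed edge is covered by none of $E_1,\ldots,E_\ell$ is at most $(1-\epsilon^2)^\ell \leq e^{-\epsilon^2 \ell} = e^{-2\log n} \leq n^{-2}$, so summing over the at most $\binom{n}{2}$ edges shows that the expected number of uncovered edges is strictly less than $1$, and the probabilistic method delivers a deterministic covering of size $\ell$. I do not foresee a serious obstacle: once one commits to the $G_3(a_i,b_i)$ subgraphs, the diameter bound is automatic from Lemma~\ref{diamatmostd}, and the $\epsilon^2$ lower bound on the covering probability follows immediately from independence of $a_i$ and $b_i$ together with the minimum-degree hypothesis.
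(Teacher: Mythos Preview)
Your proposal is correct and follows essentially the same argument as the paper: sample $\ell$ ordered vertex pairs uniformly at random, take $E_i$ to be the edge set of $G_3(a_i,b_i)$, observe that an edge $(u,w)$ is captured whenever $a_i\in N[u]$ and $b_i\in N[w]$, and conclude by a first-moment computation. The only cosmetic differences are notational (you write $N[u]$ where the paper writes $\{v\}\cup N(v)$) and in the final bound on the expected number of uncovered edges (you note it is $<1$, the paper notes it is $<1/2$); neither affects the argument.
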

\begin{proof}
Pick $\ell$ pairs $(v_i,w_i)$ of not necessarily distinct vertices uniformly at random with repetition.
If there is a path of length at most $3$ between $v_i$ and $w_i$,
let $G_3(v_i,w_i)$ be the induced subgraph of $G$ as in Definition \ref{defdiam} and $E_i$ be the edge set of $G_3(v_i,w_i)$.
By Lemma \ref{diamatmostd}, $G_3(v_i,w_i)$ has diameter at most $3$.
Let $e=(v,w)$ be an edge of $G$.
If $v_i \in \{v\} \cup N(v)$ and $w_i \in \{w\} \cup N(w)$, then $G_3(v_i,w_i)$ contains $e$ as there is a walk from $v_i$ to $w_i$
of length at most
$3$ containing $e$. So the probability that a given edge $e=(v,w)$ of $G$ is in $G_3(v_i,w_i)$ is at least
$\frac{|N(v)|}{n}\frac{|N(w)|}{n} \geq \epsilon^{2}$.
Since we are picking $\ell=2\epsilon^{-2}\log n$ pairs of vertices uniformly at random,
the probability $e$ is in none of these subgraphs is at most $(1-\epsilon^2)^{\ell}<e^{-2\log n}=n^{-2}$.
Summing over all edges of $G$, the expected number of edges of $G$ that are not in any of the $E_i$ is at most ${n \choose
2}n^{-2}<1/2$.
Hence, there is a choice of $E_1,\ldots,E_{\ell}$, each having diameter at most $3$, that together cover all edges of $G$. This
completes the proof.
\end{proof}

The family of all subsets of $[n]=\{1,\ldots,n\}$ of size $k$ is denoted by ${[n] \choose k}$. The {\it Kneser graph} KG$(n,k)$ has
vertex set ${[n] \choose k}$, where two sets of size $k$ are adjacent if they are disjoint. A famous theorem of
Lov\'asz \cite{Lo}, who proved Kneser's conjecture using topological methods,
states that the chromatic number of KG$(n,k)$ is $n-2k+2$ for $n \geq 2k \geq
2$. We use this property of Kneser graphs to construct a graph demonstrating that the bound in Theorem \ref{lemsampling} is tight up
to a constant factor. This will complete the proof of Theorem \ref{thmq34}(a).

\begin{theorem}\label{largemindegreediam4}
For every sufficiently large $N$, there is a graph $G$ on $N$ vertices with minimum degree at least $2^{-8}N$ whose edges cannot be
covered by $\frac{1}{2}\log_2 N$ subgraphs of diameter at most $4$.
\end{theorem}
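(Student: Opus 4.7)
The plan is to construct $G$ directly from a Kneser graph $\mathrm{KG}(n,k)$, leveraging Lov\'asz's theorem that $\chi(\mathrm{KG}(n,k)) = n-2k+2$. First I would fix positive integers $n$ and $k$ so that $n-2k+2 > \tfrac{1}{2}\log_2 N + 1$, where $N$ is the intended vertex count of $G$. The vertices of $G$ will be labelled by $k$-subsets of $[n]$: each label $A \in \binom{[n]}{k}$ is represented by a clique block of an appropriately chosen size $r$, and two distinct blocks with labels $A, B$ are joined by a complete bipartite subgraph iff $|A \cap B| \geq k - t$, for some $t < k/4$. The intra-block cliques provide the bulk of the minimum degree, while the inter-block edges are restricted to Kneser-close label pairs.

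The key property of this construction is that any walk of length at most $4$ in $G$ connects two vertices whose labels intersect. Along such a walk, consecutive labels either coincide (inside a block) or satisfy $|A_i \cap A_{i+1}| \geq k - t$ (across blocks). Summing set differences, the endpoint labels $A_0$ and $A_\ell$ with $\ell \leq 4$ satisfy $|A_0 \setminus A_\ell| \leq 4t < k$, so $A_0 \cap A_\ell \neq \emptyset$. Hence every subgraph of $G$ of diameter at most $4$ has a vertex-label set that is a pairwise intersecting family, i.e., an independent set in $\mathrm{KG}(n,k)$.

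Now suppose, toward a contradiction, that $E(G)$ can be covered by $\ell \leq \tfrac{1}{2}\log_2 N$ subgraphs $H_1, \ldots, H_\ell$ of diameter at most $4$. By the previous paragraph, the label set of each $V(H_i)$ is an independent set in $\mathrm{KG}(n,k)$. Since every vertex of $G$ has positive degree (by the min-degree hypothesis), every label in $\binom{[n]}{k}$ lies in some $V(H_i)$, and two KG-adjacent (i.e., disjoint) labels cannot share any $V(H_i)$ without contradicting the intersecting property. Selecting any such $i$ as the color of each label yields a proper coloring of $\mathrm{KG}(n,k)$ with $\ell$ colors, contradicting $\chi(\mathrm{KG}(n,k)) > \tfrac{1}{2}\log_2 N$, and completing the proof.

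The hard part will be the simultaneous calibration of $n$, $k$, $t$, and $r$ so that three competing requirements all hold: (a) $\delta(G) \geq 2^{-8} N$; (b) $k > 4t$, securing the short-path intersection property; and (c) $n-2k+2 > \tfrac{1}{2}\log_2 N$. The restrictive inter-block edge criterion tends to make $G$ sparse at the Kneser-close pairs, so achieving the required density — through a careful choice of the blow-up factor $r$ or a refinement of the edge rule — while preserving both (b) and (c) is the technical crux, since Kneser graphs of large relative chromatic number tend to have small diameter on their own.
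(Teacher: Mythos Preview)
Your reduction to the chromatic number of a Kneser graph is the right idea, and it is also what drives the paper's argument. However, the specific construction you propose cannot be calibrated: the difficulty you flag at the end is not merely technical but a genuine obstruction.

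In your graph on $N=r\binom{n}{k}$ vertices, the degree of a vertex with label $A$ is $r(1+D)-1$, where $D$ is the number of labels $B\ne A$ with $|A\setminus B|\le t$. If the clique blocks ``provide the bulk of the minimum degree'' as you suggest, then $r-1\ge 2^{-8}N$ forces $\binom{n}{k}\le 2^{8}$; but then $n-2k+2$ is bounded, and (c) fails once $N$ is large. Otherwise (a) requires $1+D\ge 2^{-8}\binom{n}{k}$. Now $(D+1)/\binom{n}{k}=\Pr\big[|A\setminus B|\le t\big]$ for $B$ uniform in $\binom{[n]}{k}$, a hypergeometric tail. Since $\mathrm{KG}(n,k)$ must have edges, $n\ge 2k$, so $|A\setminus B|$ has mean $k(n-k)/n\ge k/2$; with $t<k/4$ this tail is $e^{-\Omega(k)}$, forcing $k=O(1)$. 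But for bounded $k$ and $t$, $D=\sum_{j\le t}\binom{k}{j}\binom{n-k}{j}=O(n^{t})$ while $\binom{n}{k}=\Theta(n^{k})$, so $D/\binom{n}{k}\to 0$ and (a) again fails. No choice of $n,k,t,r$ works for all large $N$.

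The paper sidesteps this tension with a different host for the Kneser structure. Instead of joining ``Kneser-close'' labels directly, it takes the bipartite incidence graph between $\binom{[4k]}{k}$ and $[4k]$ (where two $k$-sets lie at distance $<4$ iff they intersect), replicates the $[4k]$ side to obtain a \emph{regular} bipartite graph $H_k$ of degree $\frac14\binom{4k}{k}$, and then glues two disjoint copies $H_k^{1},H_k^{2}$ along the perfect matching $\{(S^{1},S^{2}):S\in\binom{[4k]}{k}\}$. The coloring is now of these matching edges, one per label, and the point is that if $S\cap T=\emptyset$ then $S^{1}$ and $T^{2}$ are at distance at least $5$, so edges $(S^{1},S^{2})$ and $(T^{1},T^{2})$ cannot lie in a common diameter-$4$ piece. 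The required minimum degree comes for free from the regularity of the incidence structure, rather than from blowing up a proximity graph whose local density is inevitably too small.
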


We establish Theorem \ref{largemindegreediam4} using the following lemma.

\begin{lemma} \label{lemmalastfordiam4}
Let $a_k=4{4k \choose k}$. Then there is a graph $F_k$ on $a_k$ vertices with minimum degree
$a_k/16$ such that any covering of the edges of $F_k$ by subgraphs of diameter at most $4$ uses at least $2k+2$ subgraphs.
\end{lemma}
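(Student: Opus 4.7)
The plan exploits Lov\'asz's theorem $\chi(\mathrm{KG}(4k,k))=2k+2$, which identifies $\mathrm{KG}(4k,k)$ as the natural source of the bound $2k+2$, and then arranges $F_k$ so that any covering of its edges by diameter-$4$ subgraphs gives rise to a proper coloring of the underlying Kneser graph.

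Concretely, I would take $V(F_k)=[4]\times\binom{[4k]}{k}$, so that $|V(F_k)|=4\binom{4k}{k}=a_k$. The edge set is assembled from two parts: a ``Kneser'' part that encodes the disjointness relation on $k$-subsets (within each of the four fibers, across fibers, or both), plus a ``filler'' part chosen to bring the minimum degree up to $a_k/16=\binom{4k}{k}/4$. The filler edges need to be placed ``locally'' (for instance, connecting only pairs whose underlying $k$-sets are already close in the Kneser sense) so that they do not create diameter-$4$ shortcuts between unrelated parts of the Kneser structure. A direct count should then verify the minimum-degree assertion.

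For the covering lower bound, suppose $E(F_k)=E(H_1)\cup\cdots\cup E(H_\ell)$ with each $H_i$ of diameter at most $4$. I would define a candidate coloring $c:\binom{[4k]}{k}\to[\ell]$ of $\mathrm{KG}(4k,k)$ by assigning to each $A$ the smallest index $i$ for which a designated majority of the fiber over $A$ lies in $V(H_i)$ (existence guaranteed by pigeonhole among the four fiber vertices, since each is incident to at least one covered edge). The crux of the argument is the claim that $c$ is proper: whenever $A\cap B=\emptyset$, that is, $A\sim B$ in $\mathrm{KG}(4k,k)$, one has $c(A)\ne c(B)$. Were the claim to fail, a single diameter-$4$ subgraph $H_i$ would contain a large portion of both fibers over $A$ and over $B$; by exploiting the within-fiber Kneser structure, together with the Kneser distance between $A$ and $B$ and the locality of the filler edges, I would locate two vertices of $V(H_i)$ whose distance in $F_k$ is at least $5$, contradicting the diameter hypothesis. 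Once $c$ is proper, Lov\'asz's theorem yields $\ell\geq 2k+2$.

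The principal obstacle is the joint calibration of Kneser and filler edges. Since the demanded minimum degree $\binom{4k}{k}/4$ substantially exceeds the ordinary Kneser degree $\binom{3k}{k}$ for large $k$, the filler edges are unavoidable; but each filler edge is a potential shortcut that could allow a diameter-$4$ subgraph to straddle two disjoint fibers and wreck the coloring argument. The construction must be tuned so that every diameter-$4$ subgraph is essentially confined over a ``small'' (Kneser-independent-like) subset of $\binom{[4k]}{k}$, which is precisely where the topological rigidity of $\mathrm{KG}(4k,k)$ must do its work.
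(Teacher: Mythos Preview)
Your high-level strategy is the right one and matches the paper: invoke Lov\'asz's theorem $\chi(\mathrm{KG}(4k,k))=2k+2$, and extract from any diameter-$4$ edge covering a proper coloring of the Kneser graph. But the proposal stops exactly at the hard step, and the approach you sketch for that step does not work as stated.

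The construction is the crux, and ``Kneser edges plus filler'' is the wrong shape. You correctly observe that the Kneser degree $\binom{3k}{k}$ is far below the required $\binom{4k}{k}/4$, so filler is unavoidable in your setup; but any substantial amount of filler creates many length-$\le 4$ paths between fibers over disjoint $k$-sets, and you offer no mechanism to prevent this --- you identify the obstacle yourself and leave it open. Your coloring rule is also not well defined: with $\ell$ subgraphs and only four fiber vertices, pigeonhole does not force a ``majority'' of a fiber into a single $H_i$ once $\ell\ge 4$, which is precisely the regime of interest.

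The paper sidesteps both problems by working with the \emph{incidence} bipartite graph rather than the Kneser graph itself. Let $H_k$ be bipartite with first part $\binom{[4k]}{k}$ and second part consisting of $t=\tfrac{1}{4k}\binom{4k}{k}$ disjoint copies of $[4k]$, where $S$ is joined to every copy of each $i\in S$. Then $H_k$ is $\tfrac14\binom{4k}{k}$-regular on $a_k/2$ vertices, and two sets $S,T$ in the first part have distance $<4$ in $H_k$ iff $S\cap T\ne\emptyset$. The graph $F_k$ is two disjoint copies $H_k^1,H_k^2$ of $H_k$ together with the matching $\{S^1S^2:S\in\binom{[4k]}{k}\}$ between their first parts. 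No filler is needed; regularity comes for free from the blow-up of $[4k]$.

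The coloring is then defined on these matching \emph{edges}, not on vertices via pigeonhole: set $\chi(S)$ to be the least $i$ with $S^1S^2\in E_i$. If $\chi(S)=\chi(T)=i$ and $S\cap T=\emptyset$ (so $\mathrm{dist}_{H_k}(S,T)\ge 4$), then any $S^1$--$T^2$ path in $F_k$ must use at least one matching edge and at least four edges inside copies of $H_k$, giving $\mathrm{dist}_{F_k}(S^1,T^2)\ge 5$, a contradiction. Hence $\chi$ is a proper coloring of $\mathrm{KG}(4k,k)$ and $\ell\ge 2k+2$. The missing idea in your plan is exactly this passage through the incidence graph plus a matching between two copies: it replaces the intractable filler-calibration problem with a one-line bipartite distance count.
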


To deduce Theorem \ref{largemindegreediam4} from Lemma \ref{lemmalastfordiam4}, we let $k$ be the largest positive
integer such that $a_k \leq N$. If $a_k$ is not exactly $N$, we can duplicate some vertices of $F_k$ if necessary to get the desired
graph $G$ with $N$ vertices.
Indeed, as $N<a_{k+1} \leq 10a_{k}$ and $a_k = 4{4k \choose k} \leq 2^{4k}$, we have that $G$ has minimum degree at least
$a_k/16 \geq 2^{-8}N$ and the edges of $G$
cannot be covered by less than $2k+2 > \frac{1}{2}\log_2 N$ subgraphs of diameter at most $4$. It thus suffices to prove Lemma
\ref{lemmalastfordiam4}.

The {\it incidence graph} IG$(n,k)$ is a bipartite graph with first vertex class ${[n] \choose k}$ and second vertex class $[n]$,
where $i \in [n]$ is adjacent to $S \subset {[n] \choose k}$ if $i \in S$. Every vertex in the first vertex class has $k$ neighbors,
and every vertex in the second vertex
class has ${n-1 \choose k-1}=\frac{k}{n}{n \choose k}$ neighbors. Two vertices $S_1,S_2 \in {[n] \choose k}$ have a common neighbor
in IG$(n,k)$ if and only if they have nonempty intersection. So any coloring of ${[n] \choose k}$ such that any pair $S_1,S_2$  of
the same color have a neighbor in common in IG$(n,k)$ gives a proper vertex coloring of KG$(n,k)$. Since IG$(n,k)$ is bipartite, the
number of colors of any coloring of ${[n] \choose k}$ in which any pair $S_1,S_2$ of the same color have distance less than $4$ in
IG$(n,k)$ is at least the chromatic number of KG$(n,k)$, which is $n-2k+2$.

Let H$(n,k,t)$ denote the graph with first vertex class ${[n] \choose k}$ and second vertex class $[nt]$, where the bipartite graph
between ${[n] \choose k}$ and $\{n(j-1)+1,\ldots,nj\}$ make a copy of $IG(n,k)$ for $1 \leq j \leq t$. By construction, any coloring
of ${[n] \choose k}$ in which any pair $S_1,S_2$ of the same color have distance less than $4$ in H$(n,k,t)$ uses at least $n-2k+2$
colors. Let $t=\frac{1}{4k}{4k \choose k}$ and $H_k=H(4k,k,t)$. The number of vertices of $H_k$ is $a_k/2$, and every vertex
has degree $\frac{1}{4}{4k \choose k}=a_k/16$. We have established the following lemma.

\begin{lemma}\label{lastlemmamindegreediam4}
The bipartite graph $H_k$ is $a_k/16$-regular with $a_k/2$ vertices, and every coloring of the first vertex class of
$H_k$ such that every pair of vertices of the same color have distance less than $4$ uses at least $2k+2$ colors.
\end{lemma}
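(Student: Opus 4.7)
The plan is straightforward and mostly amounts to verifying definitions and then invoking Lov\'asz's theorem on the chromatic number of Kneser graphs.

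First I would check the vertex count and regularity by a direct counting argument. The first vertex class of $H_k$ has $\binom{4k}{k}$ vertices, and the second consists of $t$ disjoint copies of $[4k]$, contributing $4k \cdot t = \binom{4k}{k}$ vertices, for a total of $2\binom{4k}{k} = a_k/2$. Each $S \in \binom{[4k]}{k}$ has exactly $k$ neighbors in each of the $t$ copies of $\textrm{IG}(4k,k)$, so its degree is $kt = \frac{1}{4}\binom{4k}{k} = a_k/16$. On the other side, each element $i$ of a given copy of $[4k]$ is adjacent to those sets containing it, of which there are $\binom{4k-1}{k-1} = \frac{1}{4}\binom{4k}{k} = a_k/16$. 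Hence $H_k$ is $a_k/16$-regular.

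The heart of the argument is recognizing the coloring constraint as a proper coloring of a Kneser graph. Since $H_k$ is bipartite, any two distinct vertices $S_1, S_2$ in the first class have even distance in $H_k$, so ``distance less than $4$'' is equivalent to ``distance exactly $2$'', i.e., having a common neighbor. A common neighbor of $S_1$ and $S_2$ is a vertex $i$ in some copy of $[4k]$ with $i \in S_1 \cap S_2$, and such a vertex exists if and only if $S_1 \cap S_2 \neq \emptyset$. Therefore a coloring of $\binom{[4k]}{k}$ in which every monochromatic pair has distance less than $4$ in $H_k$ is precisely a coloring in which every monochromatic pair has nonempty intersection, which is exactly a proper coloring of the Kneser graph $\textrm{KG}(4k, k)$.

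Applying Lov\'asz's theorem then yields $\chi(\textrm{KG}(4k,k)) = 4k - 2k + 2 = 2k + 2$, so any such coloring uses at least $2k+2$ colors. I do not expect any serious obstacle here; the only subtle point is the parity observation that reduces ``distance $< 4$'' to ``distance $\leq 2$'' between vertices on the same side of the bipartite graph $H_k$, which is what makes the Kneser graph translation clean.
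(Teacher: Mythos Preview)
Your proposal is correct and follows essentially the same approach as the paper: direct counting for the vertex count and regularity, the parity/bipartiteness observation to reduce ``distance $<4$'' between same-class vertices to having a common neighbor, the identification of the coloring constraint with a proper coloring of $\mathrm{KG}(4k,k)$, and Lov\'asz's theorem giving $\chi(\mathrm{KG}(4k,k))=2k+2$. The only cosmetic difference is that the paper first phrases the argument for the general incidence graph $\mathrm{IG}(n,k)$ and then specializes to $n=4k$, whereas you work directly with $H_k$.
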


Let $F_k$ be the graph consisting of two disjoint copies $H_k^1$, $H_k^2$ of $H_k$ with an edge between the two copies $S^1$ and
$S^2$ of $S$ for each vertex $S \in {[n] \choose k}$ in the first vertex class of $H_k$. Notice that $F_k$ has $2|H_k| =a_k$
vertices and has minimum degree $a_k/16$. To complete the proof of Lemma \ref{lemmalastfordiam4} and hence of Theorem
\ref{largemindegreediam4}, it suffices to show that in any covering $E(F_k)=E_1 \cup \ldots \cup E_{\ell}$ in which each $E_i$ has
diameter at most $4$, the number $\ell$ of subgraphs used is at least $2k+2$.
Given such an edge-covering of $F_k$, define a coloring $\chi:{[n] \choose k} \longrightarrow [\ell]$ as follows. Let $\chi(S)=i$ if
$i$ is the smallest positive integer such that the edge between the two copies of $S$ in $F_k$ is in $E_i$.

The key observation is that if $\chi(S)=\chi(T)=i$ and the distance between $S$ and $T$ is at least four in $H_k$, then the distance
between $S^1$ and $T^2$ is at least $5$ in $F_k$, which contradicts that $E_i$ has diameter at most $4$. Indeed, in any path from
$S^1$ to $T^2$, one of the edges between the two copies of $H_k$ must be used, as well as at least four edges inside copies of
$H_k$. Thus this coloring satisfies conditions of Lemma \ref{lastlemmamindegreediam4} and therefore $\chi$ uses at least $\ell \geq
2k+2$ colors. This
completes the proof of Lemma \ref{lemmalastfordiam4} and of Theorem \ref{largemindegreediam4}.\qed

We next show how to cover all the edges of a graph of large minimum degree by a small number of subgraphs each of diameter at most
$5$.
Recall from Section \ref{sectdiam3} that $G_d(v,w)$ is the induced subgraph of $G$ consisting of all vertices on a walk from $v$ to
$w$ of length at most $d$. Lemma \ref{diamatmostd} states that $G_d(v,w)$ has diameter at most $d$. As in Section \ref{sectdiam4},
$N_r(v)$ denotes all the vertices of $G$ within distance at most $r$ from $v$ and $G_r(v)$ is
a induced subgraph of $G$ with vertex set $N_r(v)$.

\begin{lemma}
We have $Q(n,\epsilon,5) < \epsilon^{-2}$. That is, every graph $G$ with $n$ vertices and minimum degree at least $\epsilon n$ has
an edge covering $E=E_1 \cup \ldots \cup E_{\ell}$ with each $E_i$ having diameter at most $5$ and $\ell < \epsilon^{-2}$.
\end{lemma}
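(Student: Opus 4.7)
The plan is to cover $E(G)$ by low-diameter subgraphs of the form $G_5(v_i,v_j)$, where $v_1,\ldots,v_k$ is a small, well-spread set of anchor vertices obtained by a greedy construction.

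First, I would select a maximal set $S=\{v_1,\ldots,v_k\}\subseteq V(G)$ such that any two distinct members are at distance greater than $2$ in $G$. By maximality, every vertex of $G$ lies within distance $2$ of some $v_i$. The key observation is that the neighborhoods $N(v_1),\ldots,N(v_k)$ are pairwise disjoint: a common neighbor of $v_i$ and $v_j$ would force the distance between them to be at most $2$. Since each $|N(v_i)|\geq \epsilon n$ and all the neighborhoods sit inside $V(G)$, we get $k\epsilon n\leq n$, so $k\leq\epsilon^{-1}$.

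Next, for every pair $1\leq i\leq j\leq k$ (including $i=j$) I would place the subgraph $G_5(v_i,v_j)$ into the cover, whenever the distance from $v_i$ to $v_j$ is at most $5$. By Lemma~\ref{diamatmostd}, each such subgraph has diameter at most $5$. To see that the collection covers every edge $(u,w)\in E(G)$, pick $v_i$ within distance $2$ of $u$ and $v_j$ within distance $2$ of $w$. Concatenating a shortest walk from $v_i$ to $u$, the edge $(u,w)$, and a shortest walk from $w$ to $v_j$ yields a walk of length at most $5$ from $v_i$ to $v_j$ that passes through both $u$ and $w$. Hence $u$ and $w$ are both vertices of $G_5(v_i,v_j)$, and so this induced subgraph contains the edge.

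Finally, I would count: the cover uses at most $\binom{k}{2}+k=k(k+1)/2$ subgraphs, and with $k\leq\epsilon^{-1}$ we have $k(k+1)/2\leq(\epsilon^{-2}+\epsilon^{-1})/2<\epsilon^{-2}$ whenever $\epsilon<1$ (and there is nothing to prove when $\epsilon\geq 1$, since then minimum degree $\epsilon n$ is either unattainable or already forces $G$ to have diameter at most $2$). I do not anticipate any real obstacle: the greedy construction of $S$ is standard and the walk-concatenation argument is immediate; the only mild point of care is the arithmetic showing $k(k+1)/2<\epsilon^{-2}$, which crucially exploits the factor $(k+1)/2$ rather than $k$ in the pair-count.
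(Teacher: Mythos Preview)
Your proof is correct and follows essentially the same approach as the paper: a maximal $2$-separated set $\{v_1,\ldots,v_k\}$ with $k<\epsilon^{-1}$ (the paper uses the closed balls $N_1(v_i)$ to get the strict inequality), together with the induced subgraphs $G_5(v_i,v_j)$. The only cosmetic difference is that the paper handles the diagonal case with $G_2(v_i)$ rather than $G_5(v_i,v_i)$ and bounds the count by $t+\binom{t}{2}\leq t^2$ instead of $k(k+1)/2$, but the argument is the same.
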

\begin{proof}
Let $\{v_1,\ldots,v_t\}$ be a maximal set of vertices in $G$ of distance more than $2$ apart from each other. By definition,
every vertex of $G$ has distance at most $2$ from one of these vertices. It follows that every edge of $G$ has both its vertices
in some $N_2(v_i)$ or one vertex in a set $N_2(v_i)$ and the other vertex in a different set $N_2(v_j)$.
Note that in the latter case the edge lies in a walk of length at most $5$ from $v_i$ to $v_j$.
For each pair $v_i,v_j$ of distance at most $5$, we will use the subgraph $G_5(v_i,v_j)$. Also, we will use the subgraph
$G_{2}(v_i)$ for each $i$.
The number $\ell$ of subgraphs we use is at most $t+{t \choose 2} \leq t^2$ and every edge is in at least one of these diameter at
most $5$ subgraphs.
 Since $N_1(v_1)$, $\ldots,$ $N_1(v_t)$ are disjoint sets of vertices, each of size at least $1+\epsilon n$,
then $t < \epsilon^{-1}$ and hence $\ell < \epsilon^{-2}$. \end{proof}

We now present a lower bound for $Q(n,\epsilon,5)$.

\begin{theorem} \label{lowerforneps5}
There are positive constants $c$ and $c'$ such that $Q(n,\epsilon,5)>c\epsilon^{-2}$ for $\epsilon>c'(\log n)^{-1/3}$.
\end{theorem}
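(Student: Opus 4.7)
The plan is to prove the bound $Q(n,\epsilon,5) > c\epsilon^{-2}$ by constructing an $n$-vertex graph $G$ of minimum degree at least $\epsilon n$ whose edges cannot be covered by fewer than $c\epsilon^{-2}$ subgraphs of diameter at most $5$. The overall strategy parallels Lemma~\ref{lastlemmamindegreediam4}, the diameter-$4$ analogue: construct a small base graph $F$ encoding a Kneser-graph chromatic obstruction, then blow $F$ up to $n$ vertices via Lemma~\ref{blowuplem}.

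For the construction of $F$, modify the matching edges of the graph $F_k$ from Lemma~\ref{lemmalastfordiam4} into length-$2$ bridges, shifting the distance constraint by one so that the diameter-$5$ condition plays the role the diameter-$4$ condition played earlier. Take two disjoint copies $H^1, H^2$ of the bipartite incidence graph $H(n_0,k,t)$, and for each $S \in \binom{[n_0]}{k}$ adjoin an $m$-vertex clique $U_S$, every vertex of which is joined to both $S^1 \in H^1$ and $S^2 \in H^2$. Since the two copies $H^1, H^2$ communicate only through the cliques $U_S$, every path in $F$ between a vertex of $U_S$ and a vertex of $U_T$ with $S \neq T$ has length $2 + d_H(S,T)$. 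Letting $\chi(S)$ denote the color of a fixed edge incident to $U_S$ and $S^1$ in a given cover $E(F)=E_1 \cup \cdots \cup E_\ell$ by diameter-$5$ subgraphs, the condition $\chi(S)=\chi(T)=i$ forces $E_i$ to contain both such edges; its diameter being at most $5$ then gives $d_H(S,T) \leq 3$, which in the bipartite graph $H(n_0,k,t)$ is equivalent to $S \cap T \neq \emptyset$. Thus $\chi$ is a proper coloring of the Kneser graph $\mathrm{KG}(n_0,k)$, and the Lov\'asz--Kneser theorem yields $\ell \geq n_0 - 2k + 2$.

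The final step is to choose $n_0, k, m, t$ so that (i) $n_0 - 2k + 2 = \Omega(\epsilon^{-2})$, (ii) the minimum-degree-to-vertex-count ratio in $F$ equals $\Theta(\epsilon)$, and (iii) $|V(F)| \leq n$. Taking $k = \Theta(1/\epsilon)$ and $n_0 = \Theta(1/\epsilon^2)$ realizes the chromatic bound, while a Stirling estimate gives $\log|V(F)| = O(\epsilon^{-3}\log(1/\epsilon))$; imposing $|V(F)| \leq n$ then yields the precise threshold $\epsilon > c'(\log n)^{-1/3}$. Finally, Lemma~\ref{blowuplem} blows $F$ up to an $n$-vertex graph with minimum degree at least $\epsilon n$, and the lower bound transfers because every diameter-$5$ subgraph of the blow-up projects onto a diameter-$5$ subgraph of $F$.

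The principal obstacle is the joint parameter balance. Every clique vertex $u \in U_S$ has intrinsic degree only $m+1$, so one is tempted to take $m$ on the order of $\binom{n_0-1}{k-1}$ to match the internal $H$-degree; doing so, however, inflates $|V(F)|$ to $\Theta(m\binom{n_0}{k})$ and pushes the minimum-degree-to-vertex-count ratio down to $\Theta(1/\binom{n_0}{k})$, which is in tension with the chromatic requirement $n_0 - 2k + 2 = \Omega(\epsilon^{-2})$. Resolving this tension requires either adding carefully chosen inter-$U$ edges that increase the degree of $U_S$-vertices without shortening any path from $U_S$ to $U_T$ below $2 + d_H(S,T)$, or using multiple distinguished edges per $S$ in the coloring $\chi$ to obtain a lower bound on $\ell$ polynomial in $\binom{n_0}{k}$ rather than merely in $n_0 - 2k + 2$. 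The resulting quantitative optimization is exactly what pins down the $1/3$ exponent in the threshold on $\epsilon$.
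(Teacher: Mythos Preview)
Your proposal has a genuine gap: you correctly identify the parameter obstruction (the clique vertices in $U_S$ have degree $\approx m$, while the total vertex count is at least $m\binom{n_0}{k}$, forcing $\epsilon\binom{n_0}{k}\lesssim 1$, which is incompatible with $n_0-2k+2=\Omega(\epsilon^{-2})$), but you do not resolve it. Neither fix is actually carried out. Fix~(1) --- adding inter-$U$ edges without creating shortcuts --- is exactly the heart of the matter and cannot be waved away; fix~(2) is too vague to evaluate. As written, the construction does not produce a graph of minimum-degree ratio $\Theta(\epsilon)$ whose edge cover needs $\Omega(\epsilon^{-2})$ diameter-$5$ pieces, so the proof is incomplete.

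The paper avoids this obstruction by a different mechanism and does not use a blow-up at all. It does \emph{not} attempt to extract $\epsilon^{-2}$ directly from a single Kneser chromatic number. Instead it sets $d=\Theta(\epsilon^{-1})$, takes $F$ to be $d$ disjoint copies of $H_k$ (with $k\ge d^3$, so $n_0=4k$ is huge), forms $G$ from two copies $F_1,F_2$ of $F$, and adds a \emph{matching} between their first vertex classes: $S_{i,1}$ is matched to $S_{i+b,2}$, where $S\in U_b$ for a fixed partition $\binom{[4k]}{k}=U_1\cup\cdots\cup U_d$ into $d$ parts each inducing a Kneser subgraph of chromatic number $\ge 2d^2$. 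The $\epsilon^{-2}$ then comes from a pigeonhole on \emph{pairs of copies}: vertices of $B_{i,j}$ and $B_{i',j}$ with $i\ne i'$ are at distance $\ge 6$, so each diameter-$5$ subgraph $G_h$ meets at most one $B_{i,1}$ and one $B_{i',2}$; if there are fewer than $d^2$ such subgraphs, some pair $(i,i')$ is missed. The Kneser chromatic number enters only at this point, to show that covering the matching edges between copy $i$ in $F_1$ and copy $i'$ in $F_2$ already requires $\ge 2d^2$ subgraphs, giving the contradiction. This is precisely where the condition $k\ge d^3$ (equivalently $\epsilon>c'(\log n)^{-1/3}$) is used. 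The minimum-degree ratio works out cleanly because $G$ has $da_k=n$ vertices and minimum degree $a_k/16=\Theta(\epsilon n)$, with no cliques to balance.
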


Let $k$ be minimum positive integer such that $d:=n/a_k \leq 2^{-4}\epsilon^{-1}$ with $a_k=4{4k \choose k}$. The assumption
$\epsilon>c'(\log n)^{-1/3}$ in Theorem \ref{lowerforneps5} and the fact that $a_k$ grows exponentially in $k$ implies that
$k \geq d^3$ for some appropriately chosen constant $c'$. Since $a_{k+1} \leq 10a_k$, we have $d \geq 10 \cdot 2^{-4}\epsilon^{-1}$.
We will construct a graph $G$ on $n$ vertices with
minimum degree at least $\epsilon n$ such that any covering of the edges of $G$ by subgraphs of diameter at most $5$ uses at least
$d^2 \geq 2^{-15}\epsilon^{-2}$ subgraphs, which implies Theorem \ref{lowerforneps5}.

Let $H_k$ be the bipartite graph as defined before Lemma \ref{lastlemmamindegreediam4}. By Lemma \ref{lastlemmamindegreediam4}, $H_k$ is $a_k/16$-regular with  $a_k/2$ vertices, and every coloring of the first vertex class of $H_k$ such that every pair of vertices of the same color have distance less than $4$ uses at least $2k+2$ colors. Let $F$ be the graph consisting of $d$ disjoint copies of $H_k$ with no edges between them. The graph $F$ is bipartite, and we call the union of the $d$ copies of the first vertex class of $H_k$ {\it the first vertex class of $F$}, and the
remaining vertices {\it the second vertex class of $F$}. Let $G$ be the graph consisting of two disjoint copies $F_1$, $F_2$ of $F$, with a
certain matching between the first vertex class of $F_1$ and the first vertex class of $F_2$, which we define in the next paragraph.

Recall that the first vertex class of $H_k$ is the vertex set of the Kneser graph KG$(4k,k)$, and a pair of vertices in the first vertex class of $H_k$ have distance less than $4$ if and only if they are not adjacent in KG$(4k,k)$. Let ${[4k] \choose k} = U_1 \cup \ldots \cup U_d$ be a partition of the vertex set of the Kneser graph KG$(4k,k)$ into $d$ sets such that for $1 \leq i \leq d$  the induced subgraph of KG$(4k,k)$ with vertex set $U_i$ has chromatic number at least $\lfloor 2k/d \rfloor \geq 2d^2$. Such a partition exists since for {\it any} partition $\chi(\mbox{KG}(4k,k))=p_1 + \cdots + p_d$ of the chromatic number of a graph KG$(4k,k)$ into nonnegative integers, there is a partition $U_1 \cup \ldots \cup U_d$ of the vertex set into subsets such that the induced subgraph of KG$(4k,k)$ with vertex set $U_i$ has chromatic number $p_i$. Indeed, we can take $U_i$ to be the union of $p_i$ color classes in a proper coloring of KG$(4k,k)$ with $\chi(\mbox{KG}(4k,k))$ colors. For $j \in
\{1,2\}$, $1 \leq i \leq d$, and $S \subset [4k]$ with $|S|=k$, let $S_{i,j}$ be the copy of vertex $S$ in the $i$th copy of $H_k$ in $F_j$.
Suppose that $S \in U_b$, then the matching between the first vertex class of $F_1$ and the first vertex class of $F_2$ is defined by $S_{i,1}$ is adjacent to $S_{i+b,2}$, where $i+b$ is taken modulo $d$. We denote by $A_{i,j}$ and $B_{i,j}$ the first vertex class and the second vertex class, respectively, of the $i$th copy of $H_k$ in $F_j$.

By definition, graph $G$ has $2d \cdot a_k/2=da_k=n$ vertices and minimum degree at least $a_k/16 \geq \epsilon n$. Hence, Theorem \ref{lowerforneps5} follows from the next lemma.

\begin{lemma}\label{mainlemmafordiam5lb}
Any edge covering of $G$ by diameter at most $5$ subgraphs uses at least $d^2$ subgraphs.
\end{lemma}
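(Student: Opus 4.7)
The plan is to lower bound $\ell$ via a coloring of the matching edges of $G$. Write $\mathcal{M}$ for the set of matching edges, grouped by the $d^2$ cells $(i_1,i_2)\in[d]^2$: cell $(i_1,i_2)$ contains the $|U_b|$ edges $\{(S_{i_1,1},S_{i_2,2}):S\in U_b\}$ where $b\equiv i_2-i_1\pmod d$. Given a covering $E=E_1\cup\cdots\cup E_\ell$ by subgraphs of diameter at most $5$, choose for each $e\in\mathcal{M}$ an index $\chi(e)\in[\ell]$ with $e\in E_{\chi(e)}$.

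I would first show that each color class of $\chi$ is confined to a single row or a single column of the cell grid. If $\chi(e_1)=\chi(e_2)=l$ with $e_1=(S_{i_1,1},S_{i_2,2})$ and $e_2=(T_{i_1',1},T_{i_2',2})$, then all four endpoints lie in $V(E_l)$, so their pairwise distances in $G$ are at most $5$. A parity count (edges inside an $H_k$ flip the $A/B$-class, while matching edges do not) shows that walks of $G$ between two vertices of some $A_{*,j}$ have even length; thus $d_G(S_{i_1,1},T_{i_1',1})\le 4$ and $d_G(S_{i_2,2},T_{i_2',2})\le 4$. A case analysis of length-$4$ walks between $A$-class vertices in different copies of $H_k$ inside a single $F_j$ shows that only the pattern $MHHM$ works, and this forces the other coordinate to agree and the Kneser sets to intersect: if $i_1\ne i_1'$ then $i_2=i_2'$ and $S\cap T\ne\emptyset$, and symmetrically if $i_2\ne i_2'$ then $i_1=i_1'$ and $S\cap T\ne\emptyset$. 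Thus $(i_1,i_2)$ and $(i_1',i_2')$ agree in at least one coordinate, and a Helly-type argument on $[d]^2$ forces $\chi^{-1}(l)$ to lie in a single row or a single column.

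Now fix an offset $b^*\in[d]$ and examine the $d$ diagonal cells $\{(i_1,i_1+b^*):i_1\in[d]\}$, whose matching edges are indexed by $U_{b^*}$. By the previous step, a row-$i_1$ color meets the diagonal only at the cell $(i_1,i_1+b^*)$ and a column-$i_2$ color only at $(i_2-b^*,i_2)$, so each color appears on at most one diagonal cell. The crucial step is to show that, within each diagonal cell, the restricted coloring of $U_{b^*}$ is a \emph{proper} coloring of KG$(4k,k)[U_{b^*}]$, i.e.\ two matching edges of that cell with the same color correspond to $S,T\in U_{b^*}$ with $S\cap T\ne\emptyset$. Granted this, the hypothesis $\chi(\mathrm{KG}(4k,k)[U_{b^*}])\ge 2d^2\ge d$ yields at least $d$ distinct colors per diagonal cell, and disjointness across the $d$ diagonal cells gives $\ell\ge d\cdot d=d^2$.

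The hard part is the within-cell intersection claim. If $S,T\in U_{b^*}$ were disjoint with both matching edges in a common $E_l$, the $MHHM$-walk between the $F_2$-endpoints $S_{i_1+b^*,2}$ and $T_{i_1+b^*,2}$ would need a vertex of $S\cap T=\emptyset$ and is therefore unavailable, so any length-$\le 4$ walk between them in $E_l$ must lie inside $H_k^{(i_1+b^*,2)}$ and traverse four $H$-edges through some intermediate first-class vertex $R$ meeting both $S$ and $T$; this forces $R_{i_1+b^*,2}\in V(E_l)$. Tracking distances from $R_{i_1+b^*,2}$ to the remaining matching-edge endpoints of $V(E_l)$---including the $F_1$-endpoints $U_{i_1,1}$ of other $U\in W_l$ sitting in different cells of the row, whose cross-cell intersections with $S$ and $T$ are delivered by the previous step---should produce a pair in $V(E_l)$ whose $G$-distance exceeds $5$, contradicting $\mathrm{diam}(E_l)\le 5$. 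The partition property $\chi(\mathrm{KG}(4k,k)[U_{b^*}])\ge 2d^2$ is essential here to supply enough disjoint witnesses inside $U_{b^*}$ for the contradiction to close.
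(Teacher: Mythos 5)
Your opening reduction is sound and close in spirit to the paper's Claims 1 and 4: parity (edges inside a copy of $H_k$ flip the $A/B$-class, matching edges preserve it but switch $F_1/F_2$) gives even $G$-distance between two first-class vertices on the same side, and the only length-$4$ walk between first-class vertices in \emph{different} copies of $H_k$ inside one $F_j$ is of the pattern matching--$H$--$H$--matching, which indeed forces agreement of the other coordinate and $S\cap T\neq\emptyset$. The gap is exactly the step you flag as the hard part: the within-cell properness claim is false, and no distance-tracking can rescue it, because a diameter-$5$ subgraph can legitimately contain two matching edges of the same cell whose Kneser sets are disjoint. Concretely, take $S,T\in U_b$ with $S\cap T=\emptyset$, choose $R\in\binom{[4k]}{k}$ meeting both, let $x\in B_{i,1}$ and $x'\in B_{i+b,2}$ be copies of an element of $S\cap R$, and $y\in B_{i,1}$, $y'\in B_{i+b,2}$ copies of an element of $T\cap R$. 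The ten edges
$S_{i,1}x,\ xR_{i,1},\ R_{i,1}y,\ yT_{i,1},\ T_{i,1}T_{i+b,2},\ T_{i+b,2}y',\ y'R_{i+b,2},\ R_{i+b,2}x',\ x'S_{i+b,2},\ S_{i+b,2}S_{i,1}$
form a $10$-cycle in $G$, which has diameter exactly $5$ and contains both matching edges of the cell $(i,i+b)$. So a colour class living in a single diagonal cell need not induce a proper colouring of KG$(4k,k)[U_b]$, and your count of ``at least $d$ colours per diagonal cell, times $d$ cells'' collapses. Note also that the within-cell claim could never follow from $G$-distances: when $S\cap T=\emptyset$ all pairwise $G$-distances among the four endpoints are still at most $5$ (the two $F_1$-endpoints are at distance $4$ inside their own copy of $H_k$), so any correct argument must use distances inside the covering subgraph together with extra information about which vertices that subgraph avoids.

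What is missing is the paper's pigeonhole, which is the real source of the exponent $2$ in $d^2$. Since vertices of $B_{i,j}$ and $B_{i',j}$ with $i\neq i'$ are at distance at least $6$ in $G$, each diameter-$5$ subgraph contains $B$-vertices from at most one $B_{i,1}$ and at most one $B_{i',2}$, hence meets at most one pair $(i,i')$; with fewer than $d^2$ covering subgraphs some pair $(i,i')$ is met by none of them, i.e.\ every covering subgraph misses $B_{i,1}$ or misses $B_{i',2}$. Only for such a clean cell does a properness argument go through, and it is run inside the subgraph $G_h$, not in $G$: if $G_h$ avoids $B_{i,1}$, the only $G_h$-neighbour of $S_{i,1}$ is $S_{i',2}$, so for disjoint $S,S'$ the vertices $S_{i,1}$ and $S'_{i,1}$ are at distance at least $2+4=6$ in $G_h$ (your $10$-cycle is excluded precisely because it uses both $B_{i,1}$ and $B_{i',2}$). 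The classes $V_h$ of that one cell are then independent in KG$(4k,k)[U_{i'-i}]$, whose chromatic number is at least $2d^2$, already forcing $r\geq 2d^2$ subgraphs and a contradiction; no summation over the $d$ diagonal cells is needed or available.
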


Before proving Lemma \ref{mainlemmafordiam5lb}, we first establish some properties of distances between vertices in $G$.

\begin{claim}\label{firstclaimfor5}
If $v \in A_{i,j}$ and $v' \in A_{i',j}$ with $i \not = i'$, then $v$ and $v'$ have distance at least $4$ in $G$.
\end{claim}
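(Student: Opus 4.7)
The plan is to expand the ball $N_r(v)$ around $v$ for $r=1,2,3$ and check at each step that it is disjoint from $A_{i',j}$. The key structural observation is that inside a single $F_j$ the $d$ copies of $H_k$ are vertex-disjoint with no edges between them, so any vertex in $A_{i,j}$ has $H_k$-neighbors confined to $B_{i,j}$ and exactly one additional neighbor, namely its match, which lies in some $A_{*,3-j}$; dually, any $B_{i,j}$-vertex has all its neighbors in $A_{i,j}$. In other words, the only way to leave the $i$-th copy of $H_k$ while staying in $F_j$ is to first cross the matching into $F_{3-j}$ and then come back.

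First I would read off that $N_1(v) \subseteq B_{i,j} \cup \{w_v\}$ for some $w_v \in A_{*,3-j}$, which is disjoint from $A_{i',j}$ (since $i\neq i'$ and $3-j \neq j$); hence $v$ and $v'$ are non-adjacent. For $N_2(v)$, the only new vertices come from neighbors of $B_{i,j}$ and of $w_v$: the former lie in $A_{i,j}$, while the latter lie in $B_{*,3-j}$ together with $v$ itself (since $w_v$'s unique match back to $F_j$ is $v$). None of these meet $A_{i',j}$. For $N_3(v)$, the only new vertices come from matches of $A_{i,j}$-vertices, which lie in $A_{*,3-j}$, and from neighbors of the $B_{*,3-j}$-vertices, which lie in $A_{*,3-j}$. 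All new vertices are in $F_{3-j}$, so none is $v' \in A_{i',j}$. Therefore $d_G(v,v') \geq 4$.

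The argument is essentially a disciplined bookkeeping of the four strata $A_{*,j}, B_{*,j}, A_{*,3-j}, B_{*,3-j}$, so there is no genuine obstacle. The one place to be careful is the inductive step from $N_2$ to $N_3$: one must recall that each first-class vertex has exactly one match in the opposite $F$, which is precisely what prevents any three-step walk from reentering $F_j$ in a different $H_k$-copy.
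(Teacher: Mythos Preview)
Your argument is correct. Both your proof and the paper's rest on the same structural facts: vertices of $B_{i,j}$ have all neighbors in $A_{i,j}$, and vertices of $A_{i,j}$ have their $H_k$-neighbors in $B_{i,j}$ plus exactly one matching edge into some $A_{*,3-j}$. The difference is purely organizational. You run a breadth-first expansion, tracking at each radius $r\le 3$ which strata $N_r(v)$ can meet, and observe that $A_{i',j}$ never appears. The paper instead fixes a \emph{closest} pair $v\in A_{i,j}$, $v'\in A_{i',j}$, takes a shortest path $v=v_1,\ldots,v_t=v'$, and argues by minimality that $v_2$ and $v_{t-1}$ must both lie in the first vertex class of $F_{3-j}$, that they are distinct (matching edges), and that $v_3$ lies in the second vertex class of $F_{3-j}$, forcing $t\ge 5$. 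Your version has the small advantage of handling an arbitrary $v$ directly without the minimal-pair device; the paper's version is slightly shorter once that device is in place. Either way the content is the same bookkeeping you identified.
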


Let $v \in A_{i,j}$ and $v' \in A_{i',j}$ be the closest pair of vertices in these two sets. To verify the claim, let
$v=v_1,v_2,\ldots,v_t=v'$ be a shortest path from $v$ to $v'$. The vertex $v_2$, being a neighbor of $v$, must be in $B_{i,j}$ or in
the first vertex class of $F_{3-j}$. If $v_2 \in B_{i,j}$, then $v_3 \in A_{i,j}$ and we could instead start from $v_3$ and get to
$v'$ in a shorter path, a contradiction. Therefore, $v_2$ is in the first vertex class of $F_{3-j}$. Similarly, we must have
$v_{t-1}$ is in the first vertex class of $F_{3-j}$. We have $v_2 \not = v_{t-1}$ since the edge between $v$ and $v_2$ is one of the
edges of the matching, and the edge between $v_{t-1}$ and $v'$ is one of the edges of the matching, which would otherwise imply that
$v_2$ is in two edges of a matching, a contradiction. We have $v_3 \not = v$ as otherwise we would get a shorter path from $v$ to
$v'$. This implies that $v_3$, being a neighbor of $v_2$, lies in the second vertex class of $F_{3-j}$. Hence $t-1 \geq 4$ and the
distance between $v$ and $v'$ is at least $4$.

\begin{claim}\label{secondclaimfor5}
If $v \in B_{i,j}$ and $w \in B_{i',j}$ with $i \not = i'$, then $v$ and $w$ have distance at least $6$ in $G$.
\end{claim}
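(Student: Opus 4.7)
The proof should reduce directly to the previous claim. The key structural observation is that a vertex in $B_{i,j}$ has a very restricted neighborhood in $G$: within $F_j$, the second vertex class $B_{i,j}$ of the $i$-th copy of $H_k$ is only adjacent to the first vertex class $A_{i,j}$ of the same copy (since the $d$ copies of $H_k$ inside $F_j$ are vertex-disjoint with no edges between them), and the matching between $F_1$ and $F_2$ touches only the first vertex classes of each $F_j$, not $B_{i,j}$. Hence every neighbor of $v \in B_{i,j}$ in $G$ lies in $A_{i,j}$, and symmetrically every neighbor of $w \in B_{i',j}$ lies in $A_{i',j}$.

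The plan is to take any shortest path $v = u_0, u_1, \ldots, u_t = w$ in $G$ and argue as follows. By the neighborhood observation, $u_1 \in A_{i,j}$ and $u_{t-1} \in A_{i',j}$. Since $i \neq i'$, Claim~\ref{firstclaimfor5} applies to the pair $u_1, u_{t-1}$, giving that their distance in $G$ is at least $4$. But the subpath $u_1, u_2, \ldots, u_{t-1}$ has length $t-2$, so $t - 2 \geq 4$, i.e., $t \geq 6$, which is what we want.

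There is no real obstacle here; the entire content of the claim is packaged into Claim~\ref{firstclaimfor5}. The only thing to be slightly careful about is verifying that $B_{i,j}$ genuinely has no other incident edges in $G$ beyond those going to $A_{i,j}$ — in particular, checking that the matching defined between $F_1$ and $F_2$ uses only vertices of the form $S_{i,j}$ in the first vertex classes $A_{i,j}$, so no edge of the matching is incident to any $B_{i,j}$. Once this is stated explicitly, the two-line reduction to Claim~\ref{firstclaimfor5} completes the proof.
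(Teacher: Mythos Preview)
Your proposal is correct and follows essentially the same argument as the paper: take a shortest $v$--$w$ path, observe that all neighbors of $v$ (respectively $w$) lie in $A_{i,j}$ (respectively $A_{i',j}$), and then invoke Claim~\ref{firstclaimfor5} to conclude the intermediate subpath has length at least $4$, hence the full path has length at least $6$. Your additional care in verifying that the matching avoids the $B_{i,j}$'s is a useful clarification but not a point of divergence.
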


Indeed, consider a shortest path from $v$ to $w$. The second vertex of this path is in $A_{i,j}$ as all the neighbors of $v$ lie in $A_{i,j}$,
and the second to last vertex of this path is in $A_{i',j}$ as it is adjacent to $w$. By Claim \ref{firstclaimfor5}, any vertex in $A_{i,j}$ has distance at least $4$
from any vertex in $A_{i',j}$, therefore it follows that $v$ and $w$ have distance at least $6$.

\noindent {\bf Proof of Lemma \ref{mainlemmafordiam5lb}:} Suppose for contradiction that there are $r<d^2$ subgraphs $G_1,\ldots,G_r$ of $G$ each of diameter at most $5$ which cover the edges of $G$. It follows from Claim \ref{secondclaimfor5} that any diameter at most $5$ subgraph of $G$ cannot contain a vertex in $B_{i,j}$ and also a vertex in $B_{i',j}$ with $i \not = i'$. Hence, for each $h$, $1 \leq h \leq r$, there is at most one pair $(i,i')$ such that $G_h$ contains both a vertex of $B_{i,1}$ and a vertex of $B_{i',2}$. Since $r<d^2$, the pigeonhole principle implies that there is a pair $(i,i')$ such that no $G_h$ contains a vertex of $B_{i,1}$ together with a vertex of $B_{i',2}$. Fix such a pair $(i, i')$. Let $V_h$ denote the collection of all sets $S \in {[4k] \choose k}$ such that vertices $S_{i,1}$ and $S_{i',2}$ form an edge of the matching that belongs to $G_h$. By definition of such an edge, the set $S$ is in $U_{i'-i}$, where the subscript is taken modulo $d$. Recall that the induced subgraph of KG$(4k,k)$ with vertex set $U_{i'-i}$ has chromatic number at least $2d^2$. We use the following claim.

\begin{claim}\label{fourthclaimfor5}
If $S,S'$ are vertices in the first vertex class of $H_k$, then the distance between $S_{i,j}$ and $S'_{i,j}$ in $G$ is the distance
between $S$ and $S'$ in $H_k$.
\end{claim}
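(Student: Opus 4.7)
The plan is to prove both inequalities between $d_G(S_{i,j}, S'_{i,j})$ and $d_{H_k}(S, S')$. One direction is essentially free: the $i$-th copy of $H_k$ in $F_j$ is an induced subgraph of $G$, so lifting a shortest path between $S$ and $S'$ in $H_k$ produces a path of the same length in $G$, yielding $d_G(S_{i,j}, S'_{i,j}) \leq d_{H_k}(S, S')$.

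For the reverse inequality, the natural idea is to exhibit a length-nonincreasing projection of walks in $G$ onto walks in $H_k$. Concretely, I would define a map $\phi: V(G) \to V(H_k)$ as follows: for any $T_{\ell, j'}$ in the first vertex class $A_{\ell, j'}$ of the $\ell$-th copy of $H_k$ inside $F_{j'}$, set $\phi(T_{\ell, j'}) = T$, and for any vertex $v$ of the second vertex class $B_{\ell, j'}$, set $\phi(v)$ to be the corresponding vertex of $B$ under the canonical isomorphism between the $\ell$-th copy of $H_k$ in $F_{j'}$ and $H_k$ itself. By construction $\phi$ is well-defined on vertices.

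The key observation is that $\phi$ behaves nicely on every edge of $G$: edges lying inside some copy of $H_k$ (joining $A_{\ell, j'}$ and $B_{\ell, j'}$) are mapped to honest edges of $H_k$ by the copy isomorphism, whereas every matching edge $T_{i,1} T_{i+b,2}$ has both endpoints sent to the same vertex $T$ of $H_k$, so it collapses to a single point. Consequently, if $P$ is a path in $G$ of length $L$ from $S_{i,j}$ to $S'_{i,j}$ using $m$ matching edges, then applying $\phi$ and deleting repeated consecutive vertices produces a walk in $H_k$ from $S$ to $S'$ of length exactly $L - m \leq L$. This yields $d_{H_k}(S, S') \leq d_G(S_{i,j}, S'_{i,j})$ and completes the argument.

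I expect no serious obstacle here; the only point needing care is to verify that $\phi$ is well-defined across matching edges, which amounts to the tautology that the matching $T_{i,1} \leftrightarrow T_{i+b,2}$ glues two copies of the same $k$-set $T$, so both endpoints project to the same vertex of $H_k$. Everything else is bookkeeping about edges of $G$.
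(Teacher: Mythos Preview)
Your proposal is correct and follows essentially the same approach as the paper. The paper's proof is terse: it notes that the copy of $H_k$ containing $S_{i,j}$ and $S'_{i,j}$ gives $d_G \leq d_{H_k}$, and then asserts that ``any path in $G$ from $S_{i,j}$ to $S'_{i,j}$ can be projected onto a path which is not longer from $S$ to $S'$ in $H_k$'' without spelling out the projection; your explicit map $\phi$ (sending each $T_{\ell,j'}$ to $T$ and each second-class vertex to its canonical preimage in $H_k$, so that internal edges survive and matching edges collapse) is precisely the projection the paper has in mind.
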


Indeed, $S_{i,j}$ and $S'_{i,j}$ belong to a copy of $H_k$ in $G$, so their distance in $G$ is at most the distance of $S$ and $S'$ in $H_k$. In the other direction, it is easy to see that any path in $G$ from $S_{i,j}$ to $S'_{i,j}$ can be projected onto a path which is not longer from $S$ to $S'$ in $H_k$, which verifies Claim \ref{fourthclaimfor5}.

The next claim completes the proof of Lemma \ref{mainlemmafordiam5lb}. Indeed, since $G_1,\ldots,G_r$ cover the edges of $G$, we must have
$U_{i'-i}=V_1 \cup \ldots \cup V_r$. Claim \ref{thirdclaimfor5} implies that each $V_h$ forms an independent set in KG$(4k,k)$ and hence $r \geq 2d^2$.

\begin{claim}\label{thirdclaimfor5}
Each pair of vertices in $V_h$ have distance less than $4$ in $H_k$, i.e., $V_h$ forms an independent set in KG$(4k,k)$.
\end{claim}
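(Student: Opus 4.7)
The plan is to show that for any $S, S' \in V_h$ we have $d_{H_k}(S, S') \leq 2$, which is equivalent to $S \cap S' \neq \emptyset$ and hence to $V_h$ forming an independent set in KG$(4k,k)$. To do so I exploit the special pair $(i, i')$ chosen by the pigeonhole argument preceding the claim: for each $h$, either $V(G_h) \cap B_{i,1} = \emptyset$ or $V(G_h) \cap B_{i',2} = \emptyset$; by symmetry, assume the first case. The key structural observation is then the following. In $G_h$, every vertex of $A_{i,1}$ has all of its $H_k$-neighbors (which live in $B_{i,1}$) absent from $V(G_h)$, so its only possible neighbor in $G_h$ is its unique matching partner in $A_{i',2}$. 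Because $S, S' \in V_h \subset U_{i'-i}$, the matching edges $(S_{i,1}, S_{i',2})$ and $(S'_{i,1}, S'_{i',2})$ both belong to $G_h$ by definition of $V_h$, so in $G_h$ the vertices $S_{i,1}$ and $S'_{i,1}$ have degree exactly one, with unique neighbors $S_{i',2}$ and $S'_{i',2}$ respectively.

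Now I would invoke $\textrm{diam}(G_h) \leq 5$: there exists a path of length at most $5$ in $G_h$ from $S_{i,1}$ to $S'_{i,1}$, and the degree-one observation forces this path to start with the matching edge $S_{i,1} \to S_{i',2}$ and to end with $S'_{i',2} \to S'_{i,1}$. Stripping these two edges off leaves a walk of length at most $3$ from $S_{i',2}$ to $S'_{i',2}$ in $G_h$, hence in $G$. By Claim \ref{fourthclaimfor5} applied to the copy of $H_k$ indexed by $(i',2)$, the distance in $G$ between $S_{i',2}$ and $S'_{i',2}$ equals the distance in $H_k$ between $S$ and $S'$. Therefore $d_{H_k}(S, S') \leq 3$, and since $H_k$ is bipartite with $S, S'$ on the same side, this distance is even and must in fact be at most $2$, as required.

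The only real subtlety is the degree-one observation, and this is precisely what makes the pigeonhole choice of $(i, i')$ essential; for an arbitrary pair the paths in $G_h$ from $S_{i,1}$ to $S'_{i,1}$ could wander through $B_{i,1}$ and the diameter-$5$ constraint would give only $d_{H_k}(S, S') \leq 4$, which is too weak (it merely says $S, S'$ have a common non-neighbor in KG$(4k,k)$, not that they are themselves non-adjacent). Once the degree-one bottleneck routes every short $S_{i,1}$-$S'_{i,1}$ path through the matching edges, the diameter bound transfers cleanly from $G_h$ to the single copy of $H_k$ containing $S$ and $S'$, and the parity of bipartite distances supplies the final saving.
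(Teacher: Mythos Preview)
Your proof is correct and follows essentially the same route as the paper's: both use the pigeonhole choice of $(i,i')$ to force $S_{i,1}$ and $S'_{i,1}$ to have $S_{i',2}$ and $S'_{i',2}$ as their unique neighbors in $G_h$, and then combine the diameter-$5$ bound with Claim~\ref{fourthclaimfor5}. The paper phrases this as a contradiction (distance $\geq 4$ in $H_k$ forces distance $\geq 6$ in $G_h$), whereas you argue directly and add the pleasant parity observation that $d_{H_k}(S,S')\leq 3$ in fact implies $d_{H_k}(S,S')\leq 2$; this extra step is not needed for the claim but is a nice sharpening.
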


To prove Claim \ref{thirdclaimfor5}, suppose for contradiction that $S,S' \in V_h$ have distance at least $4$ in $H_k$. Without loss of generality, suppose $G_h$ contains no vertex in $B_{i,1}$ (the other case in which $G_h$ contains no vertex in $B_{i',2}$ can be treated similarly). We claim that  the distance between $S_{i,1}$ and $S'_{i,1}$ in $G_h$ is at least $6$. Indeed, the only vertex adjacent to $S_{i,1}$ in $G_h$ is $S_{i',2}$ and the only vertex adjacent to $S'_{i,1}$ in $G_h$ is $S'_{i',2}$. Since $S$ and $S'$ have distance at least $4$ in $H_k$, by Claim \ref{fourthclaimfor5}, $S_{i',2}$ and $S'_{i',2}$ have distance at least $4$ in $G$ and hence also in $G_h$. Therefore, $S_{i,1}$ and $S_{i',2}$ have distance at least $6$ in $G_h$, contradicting $G_h$ has diameter at most $5$. \qed

We next show that $Q(n,\epsilon,6)=\Theta(\epsilon^{-1})$ for $\epsilon \geq 1/n$. The lower bound follows by considering a disjoint
union of cliques, each with at least $\epsilon n+1$ vertices. The upper bound also has a simple proof.

\begin{lemma}
Every graph $G$ with $n$ vertices and minimum degree at least $\epsilon n$ has an edge covering $E=E_1 \cup \ldots \cup E_{\ell}$
with each $E_i$ having diameter at most $6$ and $\ell < \epsilon^{-1}$.
\end{lemma}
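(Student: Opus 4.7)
The plan is to mimic the $Q(n,\epsilon,5)$ argument but with a larger distance threshold so that a single ball around each chosen center already covers each incident edge, avoiding the loss of squaring the number of centers.

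First I would greedily pick a maximal set $\{v_1,\ldots,v_t\}\subset V(G)$ with pairwise distances strictly greater than $2$. By maximality, every vertex of $G$ lies within distance $2$ of some $v_i$. The balls $N_1(v_1),\ldots,N_1(v_t)$ are pairwise disjoint, since two centers at distance $>2$ cannot share a neighbor and cannot be neighbors of each other. Because each $|N_1(v_i)|\ge \epsilon n+1$ by the minimum degree assumption, we get $t(\epsilon n+1)\le n$ and hence $t<\epsilon^{-1}$.

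For each $i$, set $E_i$ to be the edge set of $G_3(v_i)$, the induced subgraph on $N_3(v_i)$. By definition $G_3(v_i)$ has radius at most $3$, hence diameter at most $6$. It remains to check that $E=E_1\cup\cdots\cup E_t$. Given any edge $uw\in E$, maximality furnishes an index $i$ with $u\in N_2(v_i)$, and then $w$ is adjacent to $u$ so $w\in N_3(v_i)$; both endpoints of $uw$ thus lie in $N_3(v_i)$, so $uw\in E_i$.

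There is no real obstacle here: the only thing to double‑check is that distance $>2$ (rather than $>1$ or $>3$) is exactly the right threshold, and it is — distance $>1$ would fail to make the $N_1(v_i)$ disjoint, while distance $>3$ would fail to guarantee that every vertex is within distance $2$ of some center, which is what lets the single ball $G_3(v_i)$ of diameter $6$ capture every incident edge of a nearby vertex.
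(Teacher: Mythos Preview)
Your proof is correct and essentially identical to the paper's own argument: both choose a maximal set of vertices pairwise at distance greater than $2$, observe that the closed neighborhoods $N_1(v_i)$ are disjoint of size at least $\epsilon n+1$ (giving $t<\epsilon^{-1}$), and cover every edge by the diameter-$6$ balls $G_3(v_i)$ via the observation that each edge has one endpoint in some $N_2(v_i)$ and hence both endpoints in $N_3(v_i)$.
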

\begin{proof}
Let $\{v_1,\ldots,v_{\ell}\}$ be a maximal set of vertices in $G$ of distance more than $2$ apart from each other.
Then every vertex of $G$ has distance at most $2$ from one of these vertices and therefore every edge of $G$ has both of its endpoints within distance at most $3$ from some $v_i$. This implies that the $\ell$ subgraphs $G_3(v_1), \ldots, G_3(v_{\ell})$
cover all the edges of $G$ and each of them  has diameter at most $6$.
Since $N_1(v_1)$, $\ldots,$ $N_1(v_{\ell})$ are disjoint sets of vertices of size at least $1+\epsilon n$, then $\ell < \epsilon^{-1}$.
\end{proof}

\section{Decomposing hypergraphs into low diameter subgraphs}\label{hypergraphsection}

We start this section by showing how to decompose the edge set of a $k$-uniform hypergraph on $n$ vertices, apart from at most
$\epsilon n^k$ edges, into a small number of subhypergraphs of diameter at most $3$. This will establish
an upper bound on $P_k(n,\epsilon,3)$. We then present two constructions giving lower bounds on
$P_k(n,\epsilon,d)$.

Let $H^k$ denote the following $k$-uniform hypergraph on $2k$ vertices with $k+2$ edges. Its vertex set consist of two disjoint
$k$-sets $V=\{v_i\}_{i=1}^k$ and $W=\{w_i\}_{i=1}^k$. The sets $V$ and $W$ are edges of $H^k$. For $1 \leq i \leq k$, vertex $w_i$
together with all vertices $v_j, j \not = i$ also form an edge of $H^k$.

Let $G$ be a $k$-uniform hypergraph and let $e=\{v_1,\ldots,v_k\}$ be a fixed edge of $G$.
Consider all the vertices of $G$ which are contained in some edge of $G$ which intersects $e$ in at least
$k-1$ vertices. Let $G(e)$ be the subhypergraph of $G$ induced by this set. Since the intersection of $e$ with itself has size $k$, by definition, all the vertices of $e$ are in $G(e)$. Moreover $e$ is an edge of $G(e)$ as well.

\begin{lemma}\label{hypdiam3}
For each edge $e$ of a $k$-uniform hypergraph $G$, the diameter of $G(e)$ is at most $3$.
\end{lemma}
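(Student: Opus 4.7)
The plan is to fix, for every vertex $u \in V(G(e))$, a witness edge $e(u) \in E(G)$ with $u \in e(u)$ and $|e(u) \cap e| \geq k-1$. Such an edge exists by the very definition of $V(G(e))$: for $u \in e$ we may simply take $e(u) = e$, while for $u \notin e$ we have $e(u) = (e \setminus \{v_{i(u)}\}) \cup \{u\}$ for a unique index $i(u) \in \{1,\ldots,k\}$. In either case every vertex of $e(u)$ lies in $V(G(e))$, so $e(u)$ is an edge of the induced subhypergraph $G(e)$, and we are free to use it when constructing paths.

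Given $a, b \in V(G(e))$, I would split into cases and in each one write down an explicit ordering of $k$, $k+1$, or $k+2$ vertices whose consecutive $k$-windows form a valid sequence of edges. If $a, b \in e$, the edge $e$ itself is a tight path of length $1$ between them. If exactly one of $a,b$ lies in $e$, say $a \in e$ and $b \notin e$, then either $a \in e(b)$ (and $e(b)$ alone gives length $1$), or $a = v_{i(b)}$, in which case the ordering $a,\ (\text{elements of } e \setminus \{a\} \text{ in any order}),\ b$ yields a tight path of length $2$ with edges $e$ and $e(b)$. If $a,b \notin e$ and $i(a) = i(b) = i$, then $e(a)$ and $e(b)$ share the $k-1$ vertices $e \setminus \{v_i\}$, so $a,\ (\text{elements of } e \setminus \{v_i\} \text{ in any order}),\ b$ is a tight path of length $2$.

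The remaining and main case is $a, b \notin e$ with $i := i(a) \neq i(b) =: j$, where a length-$3$ path is genuinely needed. I would use the ordering
\[
a,\ v_j,\ (\text{elements of } e \setminus \{v_i, v_j\} \text{ in any order}),\ v_i,\ b,
\]
which consists of $k+2$ distinct vertices, and then verify that its three consecutive $k$-windows are precisely $e(a) = (e \setminus \{v_i\}) \cup \{a\}$, the edge $e$, and $e(b) = (e \setminus \{v_j\}) \cup \{b\}$. The only delicate point — and hence the main obstacle — is arranging the interior of the ordering so that all three length-$k$ windows land on exactly the right sets; this is forced once $v_j$ is placed immediately after $a$ (so the second window trades $a$ for $v_j$ relative to the first, turning $e(a)$ into $e$) and $v_i$ is placed immediately before $b$ (so the third window trades $v_i$ for $b$, turning $e$ into $e(b)$), after which the remaining $k-2$ slots can be filled arbitrarily by the elements of $e \setminus \{v_i,v_j\}$.
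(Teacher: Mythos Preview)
Your proof is correct and follows essentially the same approach as the paper: fix a witness edge $(e \setminus \{v_i\}) \cup \{a\}$ for each vertex, case on whether the two missing indices coincide, and in the main case $i \neq j$ use exactly the ordering $a,\ v_j,\ (e \setminus \{v_i,v_j\}),\ v_i,\ b$ that the paper uses. (One tiny quibble: the index $i(u)$ is unique only once the witness edge $e(u)$ has been fixed, not as a function of $u$ alone, but this does not affect the argument.)
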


\begin{proof} Suppose $a,b$ are distinct vertices of $G(e)$. Then there are two indices $1 \leq i,j \leq k$ such that
$\{a\}\cup (e \setminus \{v_i\})$ and $\{b\}\cup (e \setminus \{v_j\})$ are both edges of $G(e)$.
If $i=j$, then the sequence $a$, followed by all elements of $e \setminus v_i$, followed by $b$ is a path of length $2$. If $i
\not
=j$, and $a=v_i$ or $b=v_j$, then $a$ and $b$ are in an edge of $G(e)$. If $i \not =j$ and neither $a = v_i$ nor
$b=v_j$, then the sequence with first element $a$, followed by $v_j$, followed by all vertices of $e\setminus \{v_i,v_j\}$,
followed by $v_i$ and finally by $b$ is a path of length $3$. In any case, the distance from $a$ to $b$, and hence the diameter
of $G(e)$, is at most $3$. \end{proof}

The number of edges of $G(e)$ is at least the number of copies of $H^k$ in $G$ for which the image of
set $V$ is fixed as $e$. Indeed, if two disjoint edges $e$ and $f$ (together with some other edges of $G$) form a copy of $H^k$ with
$f$ being the image of $W$, then each vertex in $f$ is in
$G(e)$, and so $f$ is an edge of $G(e)$.

The {\it edge density} of a $k$-uniform hypergraph is the fraction of subsets of vertices of size $k$ which are edges.
Let $K(t;k)$ denote the complete $k$-partite $k$-uniform hypergraph with parts of size $t$, whose edges are all the $k$-sets which
intersect every part in one vertex. This hypergraph has $kt$ vertices and $t^k$ edges.
The following well known lemma is proved by a straightforward counting argument and induction on the uniformity $k$ (see \cite{ErSi}).

\begin{lemma} \label{lemsidcomplete} Fix positive integers $k$ and $t$.
If $G$ is a $k$-uniform hypergraph with $n$ vertices and edge density $\epsilon$ with $\epsilon \gg n^{-t^{1-k}}$, then $G$
contains $\Omega(\epsilon^{t^k}n^{kt})$ labeled copies of $K(t;k)$.
\end{lemma}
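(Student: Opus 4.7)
The plan is to prove the lemma by induction on the uniformity $k$, using two applications of Jensen's inequality to transfer density from $G$ to its ``common link'' hypergraphs. The base case $k=1$ is immediate: a $1$-uniform hypergraph of density $\epsilon$ consists of $\epsilon n$ singleton edges, so the number of labeled copies of $K(t;1)$ equals the number of ordered $t$-tuples of distinct such singletons, namely $(\epsilon n)^{\underline{t}} = \Omega(\epsilon^t n^t)$ whenever $\epsilon n$ is large, which is guaranteed by the hypothesis $\epsilon \gg n^{-1}$.

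For the inductive step, assume the result for $k-1$ and let $G$ be a $k$-uniform hypergraph on $n$ vertices with $e(G) = \epsilon\binom{n}{k}$ edges. For each ordered $t$-tuple $\mathbf{v}=(v_1,\ldots,v_t)$ of distinct vertices, define the \emph{common link} $G_\mathbf{v}$ to be the $(k-1)$-uniform hypergraph on $V(G)\setminus\{v_1,\ldots,v_t\}$ whose edges are the $(k-1)$-sets $S$ with $S \cup \{v_i\} \in E(G)$ for every $i$. A labeled copy of $K(t;k)$ in $G$, with ordered parts $A_1,\ldots,A_k$, is uniquely encoded (after fixing which part plays the role of $A_k$) by an ordered $t$-tuple $\mathbf{v}$ listing $A_k$, together with a labeled copy of $K(t;k-1)$ on parts $A_1,\ldots,A_{k-1}$ inside $G_\mathbf{v}$ disjoint from $\{v_1,\ldots,v_t\}$. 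It therefore suffices to lower bound the number of such pairs.

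The first Jensen step bounds the average density of the link. Writing $N(S) = \{v : S \cup \{v\} \in E(G)\}$ for a $(k-1)$-set $S$, double-counting yields
$$\sum_{\mathbf{v}} e(G_\mathbf{v}) \;=\; \sum_{|S|=k-1} |N(S)|^{\underline{t}} \;\geq\; \binom{n}{k-1}\bigg(\frac{k\,e(G)}{\binom{n}{k-1}}\bigg)^{\underline{t}} \;=\; \Omega(\epsilon^t n^{k-1+t}),$$
by convexity of the falling factorial, since $k\,e(G)/\binom{n}{k-1} = \Theta(\epsilon n)$. Dividing by the $\Theta(n^t)$ ordered $t$-tuples shows that the average edge density $\overline{\epsilon_\mathbf{v}}$ of the links is $\Omega(\epsilon^t)$. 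By the inductive hypothesis, each link of density $\epsilon_\mathbf{v}$ above the threshold $\theta := n^{-1/t^{k-2}}$ contains $\Omega(\epsilon_\mathbf{v}^{t^{k-1}} n^{(k-1)t})$ labeled copies of $K(t;k-1)$. A second Jensen step, applied to the convex function $x \mapsto x^{t^{k-1}}$, then gives
$$\sum_{\mathbf{v}} \epsilon_\mathbf{v}^{t^{k-1}} \;\geq\; \Theta(n^t)\cdot \overline{\epsilon_\mathbf{v}}^{\,t^{k-1}} \;=\; \Omega\bigl(n^t \epsilon^{t^k}\bigr),$$
and multiplying by $n^{(k-1)t}$ yields $\Omega(\epsilon^{t^k} n^{kt})$ labeled copies (after dividing out the $t!$ overcounting from the choice of which part is $A_k$).

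The main technical obstacle is that the inductive hypothesis applies only to links that are sufficiently dense, so the low-density tuples with $\epsilon_\mathbf{v} < \theta$ must be removed before induction is invoked. Their contribution to $\sum_\mathbf{v} \epsilon_\mathbf{v}^{t^{k-1}}$ is at most $n^t \theta^{t^{k-1}} = n^t \cdot n^{-t}$, which is absorbed by the Jensen lower bound $n^t \epsilon^{t^k}$ precisely when $\epsilon \gg n^{-1/t^{k-1}} = n^{-t^{1-k}}$, matching the hypothesis of the lemma exactly and thus propagating the inductive condition cleanly. The remaining bookkeeping issues, namely the disjointness of the $K(t;k-1)$ copy from $\mathbf{v}$ and the fact that the link lives on $n-t$ rather than $n$ vertices, affect the count only by lower-order multiplicative factors and do not change the final asymptotic bound.
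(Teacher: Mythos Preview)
Your argument is correct and is precisely the ``straightforward counting argument and induction on the uniformity $k$'' that the paper alludes to; the paper does not actually spell out a proof of this lemma but simply cites Erd\H{o}s and Simonovits, and your double application of Jensen's inequality (once to lower-bound the total link size via $\sum_S |N(S)|^{\underline{t}}$, and once to lower-bound $\sum_{\mathbf v}\epsilon_{\mathbf v}^{t^{k-1}}$) is the standard way to execute that induction, including the threshold bookkeeping that makes the hypothesis $\epsilon \gg n^{-t^{1-k}}$ propagate. One cosmetic point: there is no ``$t!$ overcounting from the choice of which part is $A_k$'', since in a \emph{labeled} copy of $K(t;k)$ both the part playing the role of $A_k$ and the ordering of $\mathbf{v}$ are already fixed by the labeling, so your encoding counts each labeled copy exactly once; the superfluous division by $t!$ only weakens the implied constant and does not affect the $\Omega$-bound.
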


Let $G$ be a $k$-uniform hypergraph with $n$ vertices and edge density $\epsilon$. Since $H^k$ is a subhypergraph of
$K(2;k)$, the above lemma implies that $G$ contains $\Omega \big(\epsilon^{2^k}n^{2k}\big)$ labeled copies of
$K(2;k)$ and hence of $H^k$. Therefore, there is an edge $e$ for which $G(e)$ contains at least
$\Omega\big(\epsilon^{2^k}n^{2k}/\epsilon{n \choose k}k!\big)=\Omega\big(\epsilon^{1-2^k}n^k\big)$ edges. By Lemma
\ref{hypdiam3}, this subhypergraph has diameter at most $3$. Now, as we already did in the previous sections, we can
use the above fact to pull out from $G$ subhypergraphs of diameter $3$ with many edges until there are at
most $\epsilon n^k$ edges left. Using a very similar computation as in the proofs of
Theorems \ref{thmfor3} and \ref{thmfor4}, we obtain Theorem \ref{mainhyper}
that $P_k(n,\epsilon,3) =O(\epsilon^{2-2^k})$ for
$\epsilon \gg n^{-2^{1-k}}$.

We next give a lower bound on $P_k(n,\epsilon,3)$ which we conjecture is tight apart from the constant factor.

\begin{theorem} \label{hyplowerconst}
We have $P_k(n,\epsilon,3) \geq c_k\epsilon^{-k}$ for $C_k'n^{-1/2} \leq \epsilon \leq C_k$, where $c_k$, $C_k$ and $C_k'$ are
positive constants depending only on $k$.
\end{theorem}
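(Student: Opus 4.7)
The plan is to mirror the strategy used in Theorem \ref{diam3lwrb} for the graph case: construct a random $k$-partite $k$-uniform hypergraph on few vertices in which every diameter-$3$ subhypergraph is sparse, and then amplify via a blow-up to $n$ vertices. Specifically, I would take the random $k$-partite $k$-uniform hypergraph $G$ on vertex classes $V_1,\ldots,V_k$ each of size $t$, where every one of the $t^k$ transversal $k$-tuples is an edge independently with probability $p = c_k/\sqrt{t}$, for a constant $c_k$ to be chosen. Standard Chernoff bounds yield that almost surely $G$ has $(1+o(1))pt^k = \Theta_k(t^{k-1/2})$ edges and every vertex has degree $(1+o(1))pt^{k-1}$.

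The critical technical step is to prove that almost surely every subhypergraph of $G$ of diameter at most $3$ has $O_k(t^{(k-1)/2})$ edges. To see why this should be the right bound, consider the natural diameter-$3$ subhypergraph $G(e)$ attached to a fixed edge $e$, in the sense of the upper-bound proof above: in each part of $e$ the single vertex of $e$ can typically be replaced, inside $G(e)$, by about $pt$ alternatives, so $G(e)$ has roughly $1+pt$ vertices per part and hence on the order of $p(pt)^k = c_k^{k+1}\, t^{(k-1)/2}$ induced edges in expectation. Any candidate diameter-$3$ subhypergraph should be controlled by a structure of this shape.

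Next I would employ a hypergraph blow-up $G(r)$: each vertex $v$ of $G$ is replaced by an independent set $V_v$ of size $r$, and a $k$-tuple of $G(r)$ is an edge if and only if it is a transversal of $(V_{v_1},\ldots,V_{v_k})$ for some edge $v_1\cdots v_k$ of $G$. A direct adaptation of Lemma \ref{blowuplem} shows that any diameter-at-most-$d$ subhypergraph of $G(r)$ has at most $r^k M$ edges whenever every such subhypergraph of $G$ has at most $M$ edges, since the projection back to $G$ cannot increase distances. Setting $t = \lfloor C_k\epsilon^{-2}\rfloor$ and $r = n/(kt)$, with $c_k, C_k$ chosen so that $G(r)$ has at least $2\epsilon n^k$ edges on $n$ vertices, one finds that every diameter-$3$ subhypergraph of $G(r)$ has at most $r^k \cdot O_k(t^{(k-1)/2}) = O_k(\epsilon^{k+1} n^k)$ edges, so covering $\epsilon n^k$ edges demands $\Omega_k(\epsilon^{-k})$ subhypergraphs. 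The constraint $r \geq 1$ forces $\epsilon \geq C_k' n^{-1/2}$, matching the hypothesis.

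\textbf{Main obstacle.} The hardest step is rigorously bounding the size of diameter-$3$ subhypergraphs of the random $G$. In the graph case ($k=2$) this was the content of Lemma \ref{Gnnp}, whose proof required a structural characterization of diameter-$3$ bipartite graphs via common neighbors (Proposition \ref{diam3prop}), six edge-distribution properties of $G(n,n,p)$ (Lemma \ref{lem1}), and iterative bounds on vertex counts and degree sequences (Claims \ref{claim1}--\ref{claim3}). For $k$-uniform hypergraphs the structural characterization becomes substantially more intricate: one must track how two vertices in a putative diameter-$3$ subhypergraph are linked through shared $(k-1)$-sets, and the edge-distribution lemmas have to be supplemented with concentration bounds for codegrees of every order up to $k-1$. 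The vertex-count and degree-concentration arguments then generalize with additional bookkeeping, but correctly setting up the hypergraph analogues of Claims \ref{claim1}--\ref{claim3} is where the main technical work lies.
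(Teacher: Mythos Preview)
Your plan is a plausible route, but it is \emph{not} the one the paper takes, and the difference matters precisely at the point you flag as the main obstacle.

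The paper does not attempt a hypergraph analogue of Lemma \ref{Gnnp} at all. Instead it proves the lemma underlying Theorem \ref{hyplowerconst} by \emph{induction on the uniformity $k$}, using the graph case (Theorem \ref{diam3lwrb}) as the base. For the step $k\to k+1$: take the $k$-uniform hypergraph $G$ on $N=n/2$ vertices promised by the induction hypothesis, lay down $t=2/\alpha$ independent random copies $G_1,\dots,G_t$ of $G$ on the same ground set $[N]$ (so that at least half of all $k$-sets are covered), make the copies edge-disjoint to get $G_i'$, introduce fresh vertex sets $V_1,\dots,V_t$ of size $N/t$ each, and declare a $(k+1)$-set to be an edge of $H$ if it is an edge of some $G_i'$ together with one vertex of $V_i$. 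The resulting $H$ has $n$ vertices and at least $2\epsilon n^{k+1}$ edges. The key structural point is trivial by design: any tight path in $H$ stays inside a single colour class $V_i\cup[N]$ (because the $G_i'$ are edge-disjoint), and deleting the $V_i$-coordinate from each edge projects a tight path in $H$ to a tight path in $G_i'$. Hence any diameter-$3$ subhypergraph of $H$ has at most $|V_i|$ times the bound on diameter-$3$ subhypergraphs of $G$, which closes the induction with $c_{k+1}$ an explicit multiple of $c_k$.

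So the paper sidesteps entirely the problem you identify: there is no need for a hypergraph version of Proposition \ref{diam3prop}, no higher-order codegree concentration, and no Claims \ref{claim1}--\ref{claim3} for $k\ge 3$. The hard random-graph analysis is done once, at $k=2$, and then reused. Your direct random-$k$-partite approach might ultimately succeed, but as you yourself note, carrying out the structural analysis of diameter-$3$ subhypergraphs for general $k$ is substantially more delicate than anything in the paper; at present your proposal is a programme rather than a proof, whereas the inductive construction gives a short, complete argument.
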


We prove this theorem by showing that there is a dense $k$-uniform hypergraph with no large subhypergraph of diameter at most $3$.
More precisely, the next lemma shows that there is a hypergraph with $n$ vertices, at least $2\epsilon n^k$ edges, and every
subhypergraph of diameter at most $3$ has at most $c^{-1}_k\epsilon^{k+1}n^k$ edges. Hence $P_k(n,\epsilon,3) \geq (\epsilon
n^k)/(c^{-1}_k\epsilon^{k+1}n^k) = c_k\epsilon^{-k}$.

\begin{lemma}
For each integer $k \geq 2$, there are positive constants $c_k, C_k$ and $C_k'$ such that the following holds.
For all sufficiently large $n$ and $\epsilon$ satisfying $C_k' n^{-1/2} \leq \epsilon \leq C_k$,
there is a hypergraph $H$ on at most $n$ vertices with at least $2\epsilon n^{k}$ edges such
that every subhypergraph of $H$ with diameter at most $3$ has
at most $c^{-1}_k\epsilon^{k+1}n^k$ edges.
\end{lemma}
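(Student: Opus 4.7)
The plan is to follow the graph-case blueprint of Lemma \ref{Gnnp} and Theorem \ref{diam3lwrb}, adapted to the $k$-uniform setting: construct a random $k$-partite $k$-uniform hypergraph of edge density of order $\epsilon$, show that every diameter-at-most-$3$ subhypergraph contains only a $p^k$ fraction of the edges, and then blow up using the hypergraph analog of Lemma \ref{blowuplem}. Concretely, fix $N = \lceil c_0 \epsilon^{-2}\rceil$ for a constant $c_0 = c_0(k)$ to be chosen, and let $G_0$ be the random $k$-partite $k$-uniform hypergraph with vertex classes $V_1, \ldots, V_k$ of size $N$ each and each of the $N^k$ potential edges present independently with probability $p = c_0^{-1/2} N^{-1/2}$, so that $p$ is of order $\epsilon$. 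The assumption $\epsilon \geq C_k' n^{-1/2}$ ensures that $r := \lfloor n/(kN) \rfloor \geq 1$, and the target hypergraph is the $r$-blow-up $H := G_0(r)$. By Chernoff's inequality $|E(G_0)| = (1+o(1)) p N^k$ almost surely, and so for $c_0$ chosen appropriately $H$ has at most $n$ vertices and at least $2\epsilon n^k$ edges.

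The key structural input is a rigid parity-of-classes property of tight paths in a $k$-partite $k$-uniform hypergraph. Since two consecutive edges of a tight path $v_1, v_2, \ldots, v_{\ell + k - 1}$ share their $k-1$ inner vertices and every edge contains exactly one vertex from each class, $v_i$ and $v_{i+k}$ lie in the same class; iterating, $v_i$ and $v_j$ lie in the same class iff $i \equiv j \pmod{k}$. In particular, the endpoints of a tight path of length $\ell$ lie in the same class iff $\ell \equiv 2 \pmod{k}$. Consequently, for every $k \geq 2$, in any diameter-at-most-$3$ subhypergraph $H'$ two distinct same-class vertices must be at distance exactly $2$ and therefore share a common \emph{link}: a $(k-1)$-tuple $T$ with one vertex in each of the other classes such that the extensions of $T$ by either vertex are edges of $H'$.

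The counting argument proceeds as follows. Let $H' \subseteq G_0$ be a diameter-at-most-$3$ subhypergraph with vertex classes $X_1, \ldots, X_k$ of sizes $t_1, \ldots, t_k$, and for each $j$ and $T \in \prod_{i \neq j} X_i$ let $d_j(T)$ denote the number of $u \in X_j$ with $\{u\} \cup T \in E(H')$. A Chernoff-plus-union-bound argument of the type used in Lemma \ref{lem1}(a) shows that almost surely every $(k-1)$-tuple in $\prod_{i \neq j} V_i$ has at most $2pN$ neighbors in $V_j$. Feeding this upper bound into the common-link inequality $\sum_T \binom{d_j(T)}{2} \geq \binom{t_j}{2}$ and iteratively refining (in the spirit of Claims 1--3 of Lemma \ref{Gnnp}) yields $t_j \leq (1+o(1))pN$ for every $j$. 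A final Chernoff bound on the number of edges of $G_0$ contained in $X_1 \times \cdots \times X_k$ then gives $|E(H')| \leq 2p \prod_j t_j \leq 2^{k+1} p^{k+1} N^k$, and applying the hypergraph version of Lemma \ref{blowuplem} shows that every diameter-at-most-$3$ subhypergraph of $H$ has $O(\epsilon^{k+1} n^k)$ edges, which is the required bound.

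The hard part will be establishing the sharp bound $t_j \leq (1+o(1)) pN$. A single application of convexity to the common-link inequality using $d_j(T) \leq 2pN$ only yields $t_j^2 \leq 4(pN)^2 \prod_{i \neq j} t_i$, which iterates at best to $t_j = O((pN)^{k/(k-1)})$. To reach a linear bound one needs a hypergraph analog of Claims 2 and 3 of Lemma \ref{Gnnp}, stating that almost all of $X_j$ lies in the link-neighborhood of a single $(k-1)$-tuple from the other classes; this in turn requires generalizing the codegree estimates of Lemma \ref{lem1}(b)--(f) to random $k$-partite $k$-uniform hypergraphs. This refinement is the main technical obstacle of the proof; the remaining concentration, common-link, and blow-up steps are routine extensions of the graph-case tools already developed in the paper.
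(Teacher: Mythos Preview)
Your plan diverges from the paper's argument and, more importantly, breaks down for $k\ge 3$. The paper does not attempt a direct random $k$-partite construction; it argues by induction on $k$. Given a $k$-uniform example $G$ on $N$ vertices, it places $t=2/\alpha$ random copies $G_1,\ldots,G_t$ of $G$ on $[N]$, makes them edge-disjoint to obtain $G_1',\ldots,G_t'$, introduces disjoint new vertex sets $V_1,\ldots,V_t$ of size $N/t$, and declares the $(k+1)$-uniform edges of $H$ to be the sets $e\cup\{v\}$ with $e\in E(G_i')$ and $v\in V_i$. Any tight path in $H$ projects (after deleting the $V_i$-vertices) to a tight path in a single $G_i'$, so every diameter-$3$ subhypergraph of $H$ sits over one $V_i$ and its projection is a diameter-$3$ subhypergraph of $G_i'$; the inductive bound then applies.

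The fatal problem with your construction is that for $k\ge 3$ the random $k$-partite $k$-uniform hypergraph $G_0$ with $p=\Theta(N^{-1/2})$ almost surely has diameter at most $2$, so $G_0$ itself is a diameter-$3$ subhypergraph with $pN^k$ edges, far exceeding the target $p^{k+1}N^k$. Indeed, for $u,u'$ in the same class $V_j$, the number of common links $T\in\prod_{i\ne j}V_i$ with $T\cup\{u\},T\cup\{u'\}\in E(G_0)$ is binomial with mean $p^2N^{k-1}=\Theta(N^{k-2})\to\infty$, so by Chernoff every same-class pair is at distance $2$; likewise any two vertices in different classes lie in a common edge since the number of such edges has mean $pN^{k-2}=\Theta(N^{k-5/2})\to\infty$. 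The codegree picture that drives Claims~1--3 in the graph case (where $p^2N=\Theta(1)$) simply does not exist here.

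Nor can the density be adjusted to save the scheme: keeping $p=\Theta(\epsilon)$ (forced by the edge-count requirement after blow-up), the condition $p^2N^{k-1}=O(1)$ needed to avoid ubiquitous common links forces $N=O(\epsilon^{-2/(k-1)})$, whence the link-degree $pN=O(\epsilon^{(k-3)/(k-1)})$ is bounded (for $k=3$) or tends to $0$ (for $k\ge 4$), and the whole construction degenerates. The inductive ``add a colour class'' construction in the paper is therefore not just an alternative but essential; your common-link inequality $\sum_T\binom{d_j(T)}{2}\ge\binom{t_j}{2}$ is correct, but it cannot yield $t_j\le(1+o(1))pN$ because in this regime it is satisfied with $t_j=N$.
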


\begin{proof} The proof is by induction on $k$. We have already established the base case $k=2$ of this lemma in the proof of
Theorem \ref{diam3lwrb}. Let $C_{k+1} = 2^{-k-3} C_k$ and let $\epsilon \leq C_{k+1}$. Fix $\delta=2^{k+3}\epsilon$ and $N=n/2$. Our
induction hypothesis implies that there is a $k$-uniform hypergraph $G$ with $N$ vertices and $2\delta N^k$ edges such that every diameter at most $3$ subhypergraph of $G$ has at most $c^{-1}_k\delta^{k+1}N^k$ edges. So $G$ has edge density $\alpha = 2\delta N^k/{N \choose k}\leq 4k!\delta$.

Let $G_1,\ldots,G_t$ be $t=\frac{2}{\alpha}$ random copies of $G$ on the same vertex set $[N]$, where $G_i$ is formed by considering
a random bijection of $V(G)$ to $[N]$ picked independently of all the other $G_j$. The probability that a given $k$-tuple contained
in $[N]$ is an edge of at least one of these $t$ copies of $G$ is $$1-(1-\alpha)^t \geq 1-e^{-\alpha t} = 1-e^{-2} \geq 1/2.$$ By
linearity of expectation, the expected number of edges which are contained in at least one of the $t$ copies of $G$ is at least
$\frac{1}{2}{N \choose k}$. So we may pick $G_1,\ldots,G_t$ such that at least $\frac{1}{2}{N \choose k}$ of the $k$-tuples
contained in $[N]$ are in at least one of these $t$ copies of $G$.

If an edge $e$ is in multiple copies of $G$, arbitrarily delete it from all $G_i$ that contain it except one. Let $G_i'$ be the
resulting subhypergraph of $G_i$. Introduce new vertex sets $V_1,\ldots,V_t$, each of size $N/t$, and define a $(k+1)$-uniform
hypergraph $H$ on $[N] \cup V_1 \cup \ldots \cup V_t$ as follows. The edges are those $(k+1)$-sets which for some $i$ contain an
edge of $G_i'$ together with a vertex of $V_i$. The number of vertices of $H$ is $2N=n$ and the number of edges is at
least $$(N/t)\frac{1}{2}{N \choose k}=\frac{\alpha N}{4}{N \choose k} \geq \frac{1}{2} \delta N^{k+1} = 2\epsilon n^{k+1}.$$ Note
that every edge of $H$ has exactly one vertex in some $V_i$. Let $H'$ be a diameter at most $3$ subhypergraph of $H$. Since the
$G_i'$ have distinct edges, the vertex set of any strongly connected subhypergraph of $H$, and hence of $H'$, intersects at
most one $V_i$. Therefore, there is an $i$ such that $V(H') \subset V_i \cup [N]$ and $E(H') \subset V_i \times E(G'_i)$. Since each
edge of $H'$ has exactly one vertex in $V_i$, note that if we delete from any tight path in $H'$ all the vertices of $V_i$ we obtain
a tight path in $G_i'$. Therefore the subhypergraph of $G_i'$ whose edges are those which are subsets of edges
of $H'$ also has diameter at most $3$. Since $G_i'$ is a subhypergraph of a copy of $G$, any diameter $3$ subhypergraph of $G_i'$
has at most $c^{-1}_k\delta^{k+1}N^k$ edges. Thus, using that $n=2N$ and $1/t=\alpha/2\leq 2k!\delta$, we obtain that $H'$ has at most
$$|V_i|c^{-1}_k\delta^{k+1}N^k = (N/t)c^{-1}_k\delta^{k+1}N^k \leq
2k!c^{-1}_k\delta^{k+2}N^{k+1}=2^{k^2+4k+6}k!c^{-1}_k\epsilon^{k+2}n^{k+1}$$ edges. Letting $c_{k+1}=2^{-k^2-4k-6}c_k/k!$
completes the
proof by induction. \end{proof}

Essentially the same proof gives the following lemma. The only difference is the base case $k=2$ of the induction is given by taking
$H$ to be a disjoint union of cliques of equal size. Call a $k$-uniform hypergraph {\it strongly connected} if there is a tight path
between any two vertices in the hypergraph, i.e., the diameter of the hypergraph is finite.

\begin{lemma}
For each integer $k \geq 2$, there are constants $c_k, C_k$ and $C_k'$ such that the following holds.
For all sufficiently large $n$ and $\epsilon$ satisfying $C_k' n^{-1} \leq \epsilon \leq C_k$,
there is a hypergraph $H$ on $n$ vertices with $2\epsilon n^{k}$ edges
such that every strongly connected subhypergraph of $H$
has at most $c^{-1}_k\epsilon^{k}n^k$ edges.
\end{lemma}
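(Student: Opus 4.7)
The plan is to run the same induction on uniformity $k$ used in the previous lemma, substituting ``strongly connected'' for ``diameter at most $3$'' throughout and supplying a new base case. Since strong connectivity is precisely finite diameter, the new bound $c_k^{-1}\epsilon^k n^k$ is a quantitatively stronger conclusion (by one factor of $\epsilon$) than the previous $c_k^{-1}\epsilon^{k+1}n^k$, so the argument has to be a little more efficient, and this extra factor has to come entirely from the base case.

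For the base case $k=2$, I would take $H$ to be a disjoint union of $t \approx 1/(4\epsilon)$ cliques of equal size $s=n/t \approx 4\epsilon n$, giving $|E(H)| = t\binom{s}{2} \geq 2\epsilon n^2$; any connected subgraph sits inside a single clique and so has at most $\binom{s}{2} \leq 8\epsilon^2 n^2$ edges. This yields $c_2 = 1/8$ on the range $C_2'/n \leq \epsilon \leq 1/4$, which is exactly where the previous lemma lost a factor of $\epsilon$ (because the $2$-uniform construction used there was a sparse random bipartite graph rather than a disjoint union of cliques).

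For the induction step from $k$ to $k+1$, I would rerun the random-copy construction verbatim: set $\delta = 2^{k+3}\epsilon$ and $N = n/2$, invoke the induction hypothesis to obtain a $k$-uniform $G$ on $[N]$ with at least $2\delta N^k$ edges and density $\alpha = O(\delta)$ whose strongly connected subhypergraphs have at most $c_k^{-1}\delta^k N^k$ edges, take $t=2/\alpha$ independent random relabellings $G_1,\ldots,G_t$ of $G$, thin to pairwise edge-disjoint $G_1',\ldots,G_t'$ covering at least $\tfrac12\binom{N}{k}$ tuples, introduce fresh vertex sets $V_1,\ldots,V_t$ of size $N/t$, and let $H$ be the $(k+1)$-uniform hypergraph with edge set $\bigcup_i\{\{v\}\cup e : v\in V_i,\, e\in E(G_i')\}$. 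The lower bound $|E(H)| \geq 2\epsilon n^{k+1}$ is identical to the previous computation. The single adaptation is in the upper bound: for any strongly connected $H' \subseteq H$, the projection $E' := \{e \setminus V_i : e \in E(H')\}$ must again be a strongly connected subhypergraph of $G_i'$; then the induction hypothesis gives $|E'| \leq c_k^{-1}\delta^k N^k$ and hence $|E(H')| \leq |V_i|\cdot|E'| \leq 2k!\,c_k^{-1}\delta^{k+1}N^{k+1}$, which after substituting $\delta = 2^{k+3}\epsilon$ and $N = n/2$ is of the desired form $c_{k+1}^{-1}\epsilon^{k+1}n^{k+1}$.

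The main obstacle, and the only step that differs substantively from the previous lemma, is verifying that the projection preserves strong connectivity. The key combinatorial observation is that every edge of $H$ contains exactly one vertex in $\bigcup_i V_i$, which forces the $V_i$-vertices along any tight path in $H'$ to follow a periodic pattern: as the window $\{v_j,\ldots,v_{j+k}\}$ slides by one, its unique $V_i$-vertex either stays fixed (while it lies in the interior of the window) or, exactly when it reaches the leftmost slot, jumps forward by $k+1$ positions. Consequently, deleting all $V_i$-vertices from a tight path in $H'$ and collapsing consecutive duplicate $k$-edges produces an honest tight path in $G_i'$ with the same $[N]$-endpoints. Applying this to any two vertices of $V(H') \cap [N]$ yields strong connectivity of the projection, which is precisely what the induction requires.
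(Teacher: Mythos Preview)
Your proposal is correct and matches the paper's approach exactly: the paper's entire proof reads ``Essentially the same proof gives the following lemma. The only difference is the base case $k=2$ of the induction is given by taking $H$ to be a disjoint union of cliques of equal size.'' You have accurately unpacked both the base case and the induction step, and your detailed verification that deleting the $V_i$-vertices from a tight path in $H'$ yields (after collapsing repeated edges) a tight path in $G_i'$ is in fact a more careful version of the corresponding sentence in the proof of the preceding lemma.
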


The hypergraph $H$ in the previous lemma demonstrates the following corollary.

\begin{corollary}\label{lastcork} $P_k(n,\epsilon,d) \geq c_k \epsilon^{1-k}$ for $C_k'n^{-1} \leq \epsilon \leq C_k$ and
any $d$.
\end{corollary}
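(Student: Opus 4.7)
The plan is to deduce the corollary directly from the hypergraph $H$ constructed in the preceding lemma, exploiting the fact that any diameter bound (for any finite $d$) forces a subhypergraph to be strongly connected. Take the hypergraph $H$ on $n$ vertices with at least $2\epsilon n^k$ edges in which every strongly connected subhypergraph has at most $c_k^{-1}\epsilon^k n^k$ edges, valid in the range $C_k'n^{-1}\le \epsilon \le C_k$.

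Now consider an arbitrary edge partition $E(H)=E_0\cup E_1\cup\cdots\cup E_\ell$ with $|E_0|\le \epsilon n^k$ and $\mathrm{diam}(E_i)\le d$ for each $1\le i\le \ell$. Since $|E(H)|\ge 2\epsilon n^k$, the parts $E_1,\dots,E_\ell$ together contain at least $\epsilon n^k$ edges. For each $i\ge 1$, the subhypergraph spanned by $E_i$ has finite diameter, hence is strongly connected by definition, so the property of $H$ yields $|E_i|\le c_k^{-1}\epsilon^k n^k$.

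Combining these bounds gives
\[
\epsilon n^k \;\le\; \sum_{i=1}^\ell |E_i| \;\le\; \ell\cdot c_k^{-1}\epsilon^k n^k,
\]
and therefore $\ell \ge c_k\,\epsilon^{1-k}$, as required. The argument uses nothing about $d$ beyond finiteness, so the bound holds uniformly in $d$.

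There is no real obstacle here: the entire content of the corollary is packaged in the preceding lemma, and the only observation needed is that a finite-diameter subhypergraph is strongly connected, so the diameter hypothesis in the definition of $P_k(n,\epsilon,d)$ is strictly stronger than the strong-connectivity hypothesis exploited in the lemma. The only care required is to verify the parameter range $C_k'n^{-1}\le \epsilon \le C_k$ is inherited verbatim from the lemma.
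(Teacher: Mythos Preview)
Your proof is correct and follows exactly the approach the paper intends: the paper simply states that the hypergraph $H$ from the preceding lemma demonstrates the corollary, and you have correctly filled in the one-line deduction, using that a finite-diameter subhypergraph is by definition strongly connected so that each $E_i$ has at most $c_k^{-1}\epsilon^k n^k$ edges.
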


\section{Concluding remarks}
\label{concludingremarks}

In the previous section, we establish both upper and lower bounds on $P_k(n,\epsilon,d)$. We think the lower bounds in Theorem
\ref{hyplowerconst} is best possible up to a constant factor, i.e.,
all but at most $\epsilon n^k$ edges of every $n$-vertex $k$-uniform hypergraph can be
partitioned into $O(\epsilon^{-k})$  subhypergraphs of diameter $3$. We also believe that
Corollary \ref{lastcork} is tight and for each integer $k \geq 3$ there is another integer $d(k)$ such that $P_k(n,\epsilon,d(k)) =O(\epsilon^{1-k})$.
Note that Theorem \ref{main1} shows that both our conjectures hold for graphs ($k=2$).

Improving our upper bound on $P_k(n,\epsilon,3)$ would also be interesting. One possible way to do so
is to show that there are many copies of hypergraph $H^k$ in every $k$-uniform
hypergraph on $n$ vertices with edge density $\epsilon$. This problem is closely related to the well know conjectures of Simonovits
\cite{Sim} and Sidorenko \cite{Si3}, which suggest that for any bipartite graph $H$, the number of its copies in any graph $G$ on
$n$ vertices and edge density $\epsilon$ ($\epsilon>n^{-\gamma(H)}$) is asymptotically at least the same as in the $n$-vertex random
graph with the same edge density. So far it is known only in very special cases, i.e., for complete bipartite graphs,
trees, even cycles (see \cite{Si3}), and recently for cubes \cite{Ha}. It is tempting to conjecture that an analogous statement holds for
$k$-uniform $k$-partite hypergraphs ($k\geq 3)$. A $k$-uniform hypergraph is {\it $k$-partite} if there is a partition of the vertex
set into $k$ parts such that each edge has exactly one vertex in every part. Simonovits-Sidorenko conjecture for hypergraphs would
say that for any $k$-partite $k$-uniform hypergraph $H$, the number of its copies in any $k$-uniform hypergraph on $n$ vertices with
edge density $\epsilon$ ($\epsilon>n^{-\gamma(H)}$) is asymptotically at least the same as in the random $n$-vertex $k$-uniform
hypergraph with edge density $\epsilon$. It holds for {\it complete} $k$-partite $k$-uniform $H$ (Lemma \ref{lemsidcomplete} is
essentially the case when all parts of $H$ have equal size). However, as shown by Sidorenko \cite{Si3}, this is false in general.
Nevertheless, it is still an intriguing open problem to accurately estimate the minimum number of copies of a fixed hypergraph $H$ that
have to appear in every $k$-uniform hypergraph on $n$ vertices with edge density $\epsilon$.

\end{document}